\newcommand{\Rel}{{\bf Rel}}
\title{The category {\bf Rel}({\bf Nom}) }
\author[N. S. Razmara, M. Haddadi, and Kh. Keshvardoost]{N. S. Razmara, M. Haddadi$^1$, and Kh. Keshvardoost}
\address{{\bf M. Haddadi: }\rm{Faculty of Mathematics, Statistics and Computer Sciences, Semnan University, Semnan, Iran.}}
\email{ m.haddadi@semnan.ac.ir, haddadi$\_$1360@yahoo.com}
\address{{\bf Kh. Keshvardoosti: }\rm{Faculty of Mathematics, Statistics and Computer sciences, Velayat University, Iranshahr, Sistan and Baluchestan, Iran.}}
\email{khadijeh.keshvardoost@gmail.com, kh.keshvardoost@velayat.ac.ir}
\address{{\bf N. S. Razmara: }\rm{Faculty of Mathematics, Statistics and Computer Sciences, Semnan University, Semnan, Iran.}}
\email{ razmara@semnan.ac.ir, nssr$\_$29@yahoo.com}
\date{}
\begin{document}
	\newtheorem{theorem}{Theorem}[section]
	\newtheorem{lemma}[theorem]{Lemma}
	\newtheorem{proposition}[theorem]{Proposition}
	\newtheorem{corollary}[theorem]{Corollary}
	\theoremstyle{definition}
	\newtheorem{definition}[theorem]{Definition}
	\newtheorem{example}[theorem]{Example}
	\newtheorem{exercise}[theorem]{Exercise}
	\theoremstyle{remark}
	\newtheorem{remark}[theorem]{Remark}
	\newtheorem{note}[theorem]{Note}
	\newtheorem{notation}[theorem]{Notation}
	\newtheorem{My Guess}[theorem]{My Guess}

	\footnotetext[1] { Corresponding author}

	\begin{abstract}

	The category ${\rm Rel}(\mathcal{C})$ may be formed for any category $\mathcal{C}$ with finite limits using the same objects as $\mathcal{C}$ but whose morphisms from $X$ to $Y$ are binary relations in $\mathcal{C}$, that is, subobjects of $X\times Y$. In this paper, concerning the topos ${\bf Nom}$, we study the category ${\bf Rel}({\bf Nom})$. In this category, we define and investigate certain morphisms, such as deterministic morphisms.
	Then, stochastic mappings between nominal sets are defined by exploiting the underlying relation of functions between nominal sets. This allows one to reinterpret concepts and earlier results in terms of morphisms.

\end{abstract}
\maketitle

\section{Introduction and Preliminaries}

Finitely supported mathematics (or theory of nominal sets, when dealing with computer science applications) provides a framework for working with infinitely  structured hierarchically constructed by involving some basic elements (called atoms) by dealing only with a finite number of entities that form their supports, see \cite{Aleex}. This theory is related to the recent development of Fraenkel-Mostowski’s set theory, which works with ``nominal sets"
and deals with binding and new names in computer science, and developing by studying the category of nominal sets and equivariant functions between them, see \cite{Pitts2}. But some very common mathematical structures are not functions. Therefore, in this paper,  we introduce the category ${\bf Rel}({\bf Nom})$ consisting of nominal sets and equivariant relations between them which can have several advantages and is more expressive than the category of nominal sets alone, as it allows one to reason about relations between elements, not just the elements themselves. When working in ${\bf Rel}({\bf Nom})$, it is possible to reason about how permutations work  on the elements of the sets and the relations between them, which can be useful in fields such as physics and computer science.   In type theory, the category ${\bf Rel}({\bf Nom})$ can be used to model dependent types, which are types that depend on values, not just other types. This allows one to reason about the properties of programs that depend on input data. Also, the category ${\bf Rel}({\bf Nom})$ can be used to represent mathematical structures such as algebraic data types and reasoning about them in an equivalent way.

 Although the category ${\bf Rel}({\bf Nom})$ is not a topos, see Remark \ref{topos}, but presheaf  representation of nominal sets in ${\bf Rel}({\bf Nom})$ allows one to understand the mathematical structure of these objects in a more general and abstract way. Additionally, studying the equivariant relations between different presheaves can provide insight into the permutations and invariances of the sets being studied. Furthermore, in the field of computer science, in particular domain-specific languages, nominal sets and their presheaf representation can be used to reason about the syntax and semantics of programming languages in a more formal and rigorous way. Therefore, we devote Section 2 to introduce the category of ${\bf Rel}({\bf Nom})$ and explore some properties of its  morphisms. We then, in Section 3, discuss the presheaf representation of the objects of ${\bf Rel}({\bf Nom})$. Finally, in Section 4, we introduce another presheaf representation of nominal sets in ${\bf Rel}({\bf Nom})$ and introduce the natural deterministic and stochastic morphisms in this category.


\subsection{The category $G$-\bf Set}\label{plmn}

This subsection is devoted to the needed facts about $G$-sets. We refer  interested readers to  \cite{dr.ebrahimi} and \cite{kilp}  for more information.

The set $X$ is equipped with a map 
$G\times{X}\longrightarrow{X}$ (action of the group $G$ on $X$) mapping $(g,x)$ to $gx$ called a $G$-\emph{set} if for every $g_{1}, g_{2}\in G$ and every $x\in{X}$, we have $g_1 {(g_{2}{x})}=(g_{1} g_{2}){x}$ and $e{x}=x$, in which   ``$e$" is the identity of the group $G$.
For $G$-sets $X$ and $Y$, a map $f:X\rightarrow Y$ is called an \emph {equivariant map} if $f(g {x}) =g{f(x)}$, for all $x\in X$ and $g\in G$.
 The category of all $G$-sets with equivariant maps between them denoted by $G$-$\bf Set$.

An element $x$ of a $G$-set $X$ is called a \emph {zero} (or a \emph {fixed}) element if $g x=x$, for all $g \in G$. We denote the set of all zero elements of a $G$-set $X$ by $\mathcal{Z}(X)$.

The $G$-set $X$ all of whose elements are zero is called \emph {discrete}, or a $G$-set with the \emph {identity action}.

A subset $Y$ of a $G$-set $X$ is an \emph {equivariant subset} (or a $G$-\emph{subset}) of $Y$ if for all $g\in G$ and $y\in Y$ we have
$g y\in Y$. The subset $\mathcal{Z}(X)$ of $X$ is a $G$-subset.

Given a $G$-set $X$ and $x\in X$, the set $Gx=\{g x: g \in G \}$ is called the \emph {orbit} of $x$. Note that the class  $\{Gx\}_{x\in X}$ is the corresponding partition of the  equivalence relation  $\sim$ over $X$ defined by $x\sim x'$ if and only if there exists $g \in G$ with  $g x=x'$, for which the class $x/\mathord\sim$ is denoted by $\mathsf{orb}x$. 

Given a $G$-set $X$ and $x\in X$, the set $G_{_{x}}=\{g\in G: gx=x\}$ is a subgroup of $G$ fixes $x$.
\subsection{The category of nominal sets}

In this subsection, we briefly recall relevant definitions concerning nominal sets. For the most part, we follow \cite{Pitts2}. 

From now on $\mathbb{D}$ denotes  a fixed, countably infinite set whose elements $a, b, c,\dots$ are called \emph{atomic names}. A \emph{permutation} $\pi$ of $\mathbb{D}$ is a bijective map from $\mathbb{D}$ to itself. All permutations of $\mathbb{D}$ with the composition of maps as the binary operation form a group called
the \emph {symmetric group} on $\mathbb{D}$ and denoted by ${\rm Sym ({\mathbb{D}}})$. 
 
A permutation $\pi\in{\rm Sym}(\mathbb{D})$ is \emph{finitary} if the set $\lbrace{d}\in{\mathbb{D}}: \pi{d}\neq{d}\rbrace$ is a finite subset of $\mathbb{D}$.  It is clear that the set ${\rm Perm}(\mathbb{D})$ consists of all finitary permutations is a subgroup of ${\rm Sym}(\mathbb{D})$. 

 Let $X$ be a set equipped with an action of the group  ${\rm Perm}(\mathbb{D})$, ${\rm Perm}({\mathbb{D}})\times{X}\longrightarrow{X}$ mapping $(\pi,x) \rightsquigarrow \pi {x}$. By definition of action of the group ${\rm Perm}(\mathbb{D})$ over the set $X$,  we have:
\begin{itemize}
\item[(i)] ${\pi_{1}}{(\pi_{2} {x})}=(\pi_{1}\circ \pi_{2}){x}$
\item[(ii)] $id {x}=x$,
\end{itemize}
 for every $\pi_{1}, \pi_{2}\in{{\rm Perm}({\mathbb{D}}})$ and every $x\in{X}$.

The set $\mathbb{D}$ together with the specified action given in Example \ref{example nom}(i) provide the most natural example of a ${\rm Perm}(\mathbb{D})$-set. In this case, for a given $C\subseteq \mathbb{D}$, using the notation $G_{_x}$ given in Subsection 1.1, we have: $$({\rm Perm}(\mathbb{D}))_{_{C}}=\{\pi \in {\rm Perm}(\mathbb{D}): \pi (d)=d, \ \forall d\in C\}.$$

Given a ${\rm Perm}(\mathbb{D})$-set $X$, a set of atomic names $C\subseteq\mathbb{D}$ is a \emph {support} for an element $x\in{X}$ if for all $\pi\in{{\rm Perm}(\mathbb{D}})$, we have:

  $$[\forall d\in C\, : \pi (d)=d)] \Longrightarrow \pi x=x.$$ In other words, $$\pi \in ({\rm Perm}(\mathbb{D}))_{_{C}}\Longrightarrow \pi x=x.$$
Given a ${\rm Perm}(\mathbb{D})$-set $X$, we say an element $x\in{X}$ is \emph{finitely supported} if $x$ has a finite support.

\begin{definition}\cite{Pitts2}
A \emph{nominal set} is a ${\rm Perm}(\mathbb{D})$-set, each of which element is finitely supported. 

Nominal sets are the objects of a category, denoted by $ \bf{Nom}$, whose morphisms are equivariant maps and whose composition and identities are as in the category of ${\rm Perm}(\mathbb{D})$-$\bf{Set}$. The category  $\bf{Nom}$ is a full subcategory of the category ${\rm Perm}(\mathbb{D})$-$\bf{Set}$.
\end{definition}

 \begin{remark}\label{supp for G}\cite[Propositions 2.1, 2.3]{Pitts2}
 Suppose $X$ is a nominal set and $x\in X$. 
 
 {\rm(i)} A finite subset $C\subseteq \mathbb{D}$ supports $x$ if and only if $(d_1\ d_2) x=x$, for all $d_1, d_2\notin C$.
 
 {\rm(ii)} Intersection of two finite supports of $x$ is a support of $x$.
 
 {\rm(iii)} By (ii), $x$ has the least finite support and is denoted by ${\rm supp}\,x$. In fact,
 ${\rm supp}_{_{X}}\,x=\bigcap\{C: C\  \text{is a finite support of}\, x\}$.
 \end{remark}

\begin{lemma}\cite{Pitts2}\label{Xfs is nom}
 If $X$ is a ${\rm Perm}({\mathbb{D}})$-set, then the subset $$X_{\rm fs}=\{x\in{X}: x\, \text{is finitely supported in  X}\}$$ of $X$, consisting of all finitely supported elements of $X$, is a nominal set.
 \end{lemma}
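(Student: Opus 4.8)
The plan is to check directly that $X_{\rm fs}$, equipped with the action of ${\rm Perm}(\mathbb{D})$ inherited from $X$, satisfies the definition of a nominal set. Two things must be verified: first, that $X_{\rm fs}$ is closed under the action of ${\rm Perm}(\mathbb{D})$, so that the restriction of the action map ${\rm Perm}(\mathbb{D})\times X\to X$ to ${\rm Perm}(\mathbb{D})\times X_{\rm fs}$ actually lands in $X_{\rm fs}$; and second, that every element of $X_{\rm fs}$ is finitely supported, which is immediate from the way $X_{\rm fs}$ is defined. The action axioms (i) and (ii) are then inherited from $X$ with no further work, so $X_{\rm fs}$ will be a ${\rm Perm}(\mathbb{D})$-set all of whose elements are finitely supported, i.e. a nominal set.

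The substantive point is closure. I would prove the stronger and more useful statement that if $C\subseteq\mathbb{D}$ is a finite support of $x\in X$ and $\pi\in{\rm Perm}(\mathbb{D})$, then the finite set $\pi\cdot C:=\{\pi(d):d\in C\}$ is a support of $\pi x$. Fix $\sigma\in{\rm Perm}(\mathbb{D})$ with $\sigma(d')=d'$ for every $d'\in\pi\cdot C$; the goal is to show $\sigma(\pi x)=\pi x$. The key device is the conjugate $\pi^{-1}\sigma\pi$, which again belongs to ${\rm Perm}(\mathbb{D})$ because ${\rm Perm}(\mathbb{D})$ is a group and a conjugate of a finitary permutation is finitary. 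For each $d\in C$ we have $\pi(d)\in\pi\cdot C$, hence $\sigma(\pi(d))=\pi(d)$, and therefore $(\pi^{-1}\sigma\pi)(d)=\pi^{-1}(\pi(d))=d$. Thus $\pi^{-1}\sigma\pi$ fixes $C$ pointwise, so the support hypothesis on $C$ gives $(\pi^{-1}\sigma\pi)x=x$. Applying $\pi$ to both sides and using axioms (i) and (ii),
\[
\sigma(\pi x)=(\sigma\circ\pi)x=(\pi\circ\pi^{-1}\sigma\pi)x=\pi\big((\pi^{-1}\sigma\pi)x\big)=\pi x,
\]
as required. Since $C$ is finite, so is $\pi\cdot C$, and hence $\pi x\in X_{\rm fs}$.

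Assembling the pieces: $X_{\rm fs}$ is a ${\rm Perm}(\mathbb{D})$-subset of $X$ by the previous paragraph, every one of its elements is finitely supported by definition, and the group-action axioms hold because they hold in $X$; therefore $X_{\rm fs}$ is a nominal set. One may also record that $\mathcal{Z}(X)\subseteq X_{\rm fs}$, since a fixed element is supported by the empty set, so the construction is nontrivial whenever $X$ has fixed points, while if $X$ has none then $X_{\rm fs}$ may be empty, which is still a legitimate nominal set. I do not expect a genuine obstacle here; the only step needing a moment's care is the conjugation computation, in particular the observation that $\pi^{-1}\sigma\pi$ is still a finitary permutation so that the support hypothesis on $C$ may legitimately be applied to it.
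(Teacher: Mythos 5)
Your proof is correct and is essentially the standard argument behind this lemma, which the paper simply cites from Pitts without proof: the closure of $X_{\rm fs}$ under the action follows from the conjugation computation showing that $\pi\cdot C$ supports $\pi x$ whenever $C$ supports $x$ (the same fact recorded as Remark~\ref{supp is equi.} in the paper). Nothing further is needed.
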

 
\begin{remark}\cite{Pitts2}\label{P(X)-is-act}
 {\rm (i)} Given a ${{\rm Perm}(\mathbb{D}})$-set $X$, the set $\mathcal{P}(X)=\{Y:{Y\subseteq{X}}\}$ with the following action 
 
 $${\rm Prem}(\mathbb{D})\times{\mathcal{P}(X)}\longrightarrow{\mathcal{P}(X)}$$
 $$(\pi ,Y)\rightsquigarrow{\pi\cdot{Y}=\lbrace {\pi{y}:{ y\in{Y} }}\rbrace}$$
 
  is a ${\rm Perm}(\mathbb{D})$-set. A set of atomic names $C$ supports
 $Y \in{\mathcal{P}(X)}$ if and only if
 $$(\forall {\pi\in{{\rm Perm}(\mathbb{D}}}))((\forall{d \in{ C}}) ~\pi(d) = d) \Longrightarrow{ (\forall ~y \in{Y} )~~ \pi{y} \in{Y}}.$$

{\rm (ii)} The equivariant subsets of $X$ are exactly the zero elements of $\mathcal{P}(X)$. Hence, we have ${\rm supp}\,Y=\emptyset$  if and only if $Y$ is an equivariant subset of $X$, for every $Y\in \mathcal{P}(X)$. Particularly, $X$ is supported by the empty set in $\mathcal{P}(X)$.  

\medskip
 
{\rm (iii)} The finitely supported elements of $\mathcal{P}(\mathbb{D})$ are finite and cofinite subsets of $X$; more explicitly, $C\in \mathcal{P}(\mathbb{D})$ is finitely supported if either $C$ or $\mathbb{D}- C$ is finite.
\end{remark}

In the following, we give some examples of nominal sets.
\begin{example}\label{example nom}
{\rm(i)} The set $\mathbb{D}$ is a nominal set,  with the action 
$${\rm Perm}(\mathbb{D})\times{\mathbb{D}}\longrightarrow{\mathbb{D}} $$
$$ (\pi,d)\leadsto{\pi(d)}.$$
Indeed, the set $\{d\}$ is a finite support of $d$, for every $d\in \mathbb{D}$.
\medskip

{\rm(ii)} Every discrete ${\rm Perm}(\mathbb{D})$-set $X$ is a nominal set. Indeed, the empty set is a finite support for each element $x\in X$. 
\medskip

{\rm(iii)} Each finite element of $\mathcal{P}(\mathbb{D})$ is supported by itself. So we get the nominal set $\mathcal{P}_{\rm f}(\mathbb{D})$ of all finite subsets of $\mathbb{D}$ 
with $\pi\cdot C = \{\pi d : d \in C \}$ and ${\rm supp}\, C= C$. 
\end{example}

\begin{remark}\label{supp is equi.}\cite[Proposition 2.11]{Pitts2}
If $X$ is a nominal set and $x\in X$, then $\pi {\rm supp}\,x={\rm supp}\, \pi x$, for every $\pi \in {\rm Perm}(\mathbb{D})$.
\end{remark}

Before moving on to the next lemma, we recall the following definition from \cite{kilp}.

\begin{definition}
A nominal set $X$ is called  

{\rm(i)} \emph{decomposable} if there exist  non-empty  nominal subsets $X_{1},X_{2}$, such that $X=X_{1}\cup X_{2}$ and $X_{1}\cap X_{2}=\emptyset$.  Otherwise, $X$ is \emph {indecomposable}.

{\rm(ii)} \emph{cyclic} if it is generated by only one element. That means it is of the form ${\rm Perm}(\mathbb{D})x$, for some $x\in X$.
\end{definition}

\begin{lemma}\label{cyclic nom}
If $X$ is a non-trivial indecomposable nominal set, then $X$ is cyclic and has no non-trivial nominal subset.
\end{lemma}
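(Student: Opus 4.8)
The plan is to show that a non-trivial indecomposable nominal set $X$ must be a single orbit, and then deduce it has no non-trivial nominal subset. First I would recall that the orbits $\{{\rm Perm}(\mathbb{D})x\}_{x\in X}$ partition $X$, and that each orbit ${\rm Perm}(\mathbb{D})x$ is an equivariant (nominal) subset of $X$ — equivariance is immediate since applying any $\pi$ to $\pi' x$ yields $(\pi\pi')x$, again in the orbit, and each element has a finite support so the orbit is a nominal set. Hence if $X$ had two distinct orbits $O_1 = {\rm Perm}(\mathbb{D})x$ and $O_2$, then setting $X_1 = O_1$ and $X_2 = X \setminus O_1$ (the union of all remaining orbits) gives a decomposition of $X$ into two non-empty disjoint nominal subsets, contradicting indecomposability. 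Therefore $X$ has exactly one orbit, i.e. $X = {\rm Perm}(\mathbb{D})x$ for any $x \in X$, so $X$ is cyclic.

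Next I would argue that $X$ has no non-trivial nominal subset, where ``non-trivial'' means a nominal subset other than $\emptyset$ and $X$ itself. Suppose $Y \subseteq X$ is a nominal (equivariant) subset with $\emptyset \ne Y \ne X$. Then its complement $X \setminus Y$ is also equivariant: if $x \notin Y$ and $\pi x \in Y$, then applying $\pi^{-1}$ would give $x = \pi^{-1}(\pi x) \in Y$ since $Y$ is equivariant, a contradiction; and $X\setminus Y$ consists of finitely supported elements since $X$ does, so it is a nominal subset. Moreover $X \setminus Y$ is non-empty because $Y \ne X$. Thus $X = Y \cup (X\setminus Y)$ exhibits $X$ as decomposable, contradicting the hypothesis. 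Hence no such $Y$ exists.

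One subtlety to be careful about is the meaning of ``non-trivial'' and ``trivial'' in the statement: the lemma says $X$ is non-trivial (presumably meaning $X \ne \emptyset$, or perhaps $X$ is not a one-point set / not discrete of size one) and concludes it has no non-trivial nominal subset (i.e. only $\emptyset$ and $X$). The argument above handles precisely this; the only place the non-triviality of $X$ itself is used is to guarantee $X$ is non-empty so that it has at least one orbit to work with. I do not expect a genuine obstacle here — the proof is essentially the observation that ``indecomposable'' for nominal sets, being a Boolean-complementation condition, forces the orbit lattice and the lattice of equivariant subsets to be trivial. The one step requiring a line of justification rather than a one-liner is checking that an orbit (and the complement of an equivariant subset) is genuinely a \emph{nominal} subset, i.e. closed under the action and consisting of finitely supported elements, both of which follow directly from the definitions and the fact that $X$ is already a nominal set.
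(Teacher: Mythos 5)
Your proof is correct and follows essentially the same route as the paper: both establish cyclicity by observing that an orbit together with its (equivariant) complement would decompose $X$, and both then rule out proper non-empty equivariant subsets. The only cosmetic difference is in the second half, where the paper uses the just-established cyclicity (any non-empty equivariant $A$ contains some $\pi x$, hence $x$, hence all of ${\rm Perm}(\mathbb{D})x=X$) while you reuse the complement/indecomposability argument; both are equally valid.
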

\begin{proof}
Let $x\in X$. Then, ${\rm Perm}(\mathbb{D})x\subseteq X$. If $X\neq {\rm Perm}(\mathbb{D})x$, then $X={\rm Perm}(\mathbb{D})x \cup (X \setminus{\rm Perm}(\mathbb{D})x)$ which is a contradiction. Note that, $X\setminus{\rm Perm}(\mathbb{D})x$ is an equivariant subset of $X$. Now, suppose $A$ is an equivariant subset of $X={\rm Perm}(\mathbb{D})x$. Let $a\in A$. Then, $a\in {\rm Perm}(\mathbb{D})x$ and so there exists $\pi \in {\rm Perm}(\mathbb{D})$ with $a=\pi x$. Thus, $x\in A$ and so $X=A$.
\end{proof}

  \begin{lemma}\label{suppor of fs map}
  Suppose $X$ and $Y$ are two nominal sets. Also, suppose $X'\in \mathcal{P}_{_{\rm fs}}(X)$ and $Y'\in \mathcal{P}_{_{\rm fs}}(Y)$. If $f:X'\longrightarrow Y'$ is a finitely supported map, then ${\rm supp}\,f(X')\subseteq {\rm supp}\,f \cup {\rm supp}\,X'$. Furthermore, ${\rm supp}\,f(x)\subseteq {\rm supp}\,f \cup {\rm supp}\,x$ when $X'=\{x\}$.
  \end{lemma}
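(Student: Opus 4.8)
The plan is to exhibit an explicit finite support for the subset $f(X')\in\mathcal P(Y)$, namely $D:={\rm supp}\,f\cup{\rm supp}\,X'$, and then to conclude by minimality of the least support (Remark~\ref{supp for G}(iii)). First I would fix conventions: regard the map $f\colon X'\to Y'$ via its graph $G_f=\{(x,f(x)):x\in X'\}$, an element of $\mathcal P(X\times Y)$ equipped with the action of Remark~\ref{P(X)-is-act}(i), so that ``$f$ is finitely supported'' means precisely that $\pi\cdot G_f=G_f$ whenever $\pi$ fixes ${\rm supp}\,f$ pointwise, and ${\rm supp}\,f={\rm supp}\,G_f$. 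Unravelling this: for such a $\pi$ and any $(x,y)\in G_f$ we get $(\pi x,\pi y)\in G_f$; moreover $\Dom(\pi\cdot G_f)=\pi\cdot X'$, so $\pi\cdot X'=X'$, which incidentally shows ${\rm supp}\,X'\subseteq{\rm supp}\,f$ --- though I will not need this simplification and will keep $D$ as written.

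Next comes the one computation. Let $\pi\in{\rm Perm}(\mathbb D)$ fix every element of $D$. Since $\pi$ fixes ${\rm supp}\,X'$, the characterization in Remark~\ref{P(X)-is-act}(i) gives $\pi\cdot X'\subseteq X'$, and applying the same to $\pi^{-1}$ (which also fixes $D$) yields $\pi\cdot X'=X'$; in particular $x\in X'$ implies $\pi x\in X'$. Since $\pi$ fixes ${\rm supp}\,f$ we have $\pi\cdot G_f=G_f$, so from $(x,f(x))\in G_f$ we obtain $(\pi x,\pi f(x))\in G_f$; as $G_f$ is functional and $\pi x\in X'=\Dom G_f$, this forces $f(\pi x)=\pi f(x)$. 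Consequently, for any $y\in f(X')$, say $y=f(x)$ with $x\in X'$, we get $\pi y=\pi f(x)=f(\pi x)\in f(X')$ because $\pi x\in X'$. By Remark~\ref{P(X)-is-act}(i) this says exactly that $D$ supports $f(X')\in\mathcal P(Y)$; since $D$ is finite, $f(X')$ is finitely supported and ${\rm supp}\,f(X')\subseteq D={\rm supp}\,f\cup{\rm supp}\,X'$.

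For the last assertion I would simply specialize to $X'=\{x\}$: then $f(X')=\{f(x)\}$, and since $\pi\cdot\{z\}=\{\pi z\}$ one has $\pi\cdot\{z\}=\{z\}$ if and only if $\pi z=z$, hence ${\rm supp}\,\{z\}={\rm supp}\,z$ for every element $z$ of a nominal set; applying the first part gives ${\rm supp}\,f(x)={\rm supp}\,\{f(x)\}\subseteq{\rm supp}\,f\cup{\rm supp}\,\{x\}={\rm supp}\,f\cup{\rm supp}\,x$. I do not expect a genuine obstacle here: the only point that needs care is the bookkeeping around the ${\rm Perm}(\mathbb D)$-action on $f$ --- treating $f$ as a relation so that the action and the notion of finite support make sense even though $\Dom f=X'$ is not an equivariant subset --- together with the observation that a $\pi$ fixing ${\rm supp}\,X'$ keeps $\pi x$ inside $\Dom f$, which is exactly what lets functionality of $G_f$ deliver the identity $f(\pi x)=\pi f(x)$ on the nose.
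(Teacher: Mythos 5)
Your proposal is correct and follows essentially the same route as the paper: exhibit $D={\rm supp}\,f\cup{\rm supp}\,X'$ as a finite support of $f(X')$ by deriving $f(\pi x)=\pi f(x)$ for permutations fixing $D$, then conclude by minimality of the least support. The only cosmetic differences are that the paper works with transpositions $(d_1\ d_2)$ via Remark~\ref{supp for G}(i) rather than general permutations fixing $D$ pointwise, and that you are more explicit about interpreting the action on $f$ through its graph; both are equivalent and your version is, if anything, slightly more careful.
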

  
  \begin{proof}
  Let $d_1, d_2\notin {\rm supp}\,f \cup {\rm supp}\,X'$. Then, $(d_1\ d_2) f=f$ and $(d_1\ d_2) X'=X'$. Since $(d_1\ d_2) f=f$, we have $f((d_1\ d_2) x)=(d_1\ d_2) f(x)$, for all $x\in X'$.
  Let $f(x)\in f(X')$ with $x\in X'$. Then, $(d_1\ d_2) X'=X'$ implies that $(d_1\ d_2) x\in X'$ and so $(d_1\ d_2) f(x)=f((d_1\ d_2)x)\in f(X')$.
     Thus, $(d_1\ d_2) f(X')=f((d_1\ d_2)X')$.
  \end{proof}
  
\subsection{The category $\Rel$}\quad
In this subsection, we review some elementary facts concerning the category of sets and relations, denoted by {\bf Rel}, from \cite{Barr}. The category {\bf Rel} is a category whose objects are sets and morphisms are relations, $R\subseteq X\times Y$. Here, the set of relations from $X$ to $Y$ is denoted by $\mathcal{R}(X,Y)$. By $R:X\longrightarrow Y$, we mean $R\in \mathcal{R}(X,Y)$. The composition of morphisms
$R \in \mathcal{R}(X,Y)$ and $S\in \mathcal{R}(Y, Z)$ is the relational composition $(S\circ R )\in \mathcal{R}(X, Z)$, defined by
$$(x,z) \in  (S\circ R ) \Longleftrightarrow \exists y\in Y; (x,y) \in  R  \text { and }  (y,z) \in S.$$
The identity morphism $id_X :X\to X$ is the identity relation $\Delta_X = \{(x,x):x\in X\}$. 
The category of $\bf{Set}$ is a full subcategory of $\bf{Rel}$.

\begin{definition}\cite{Barr}\label{def image and domain}
Suppose $R\in \mathcal{R}(X, Y)$. 

{\rm(i)} For given $S \subseteq X$, the set ${\overrightarrow R}(S)=\{y\in{Y}: \exists{x\in{S}}; (x,y)\in{R}\}$ is called the {\it direct image} of $S$ under $R$. Particularly, ${\overrightarrow R}(X)$ is called the {\it image} of $R$ and is denoted by ${\rm Im}R$. For the singleton subset $\{x\}\subseteq X$, the set ${\overrightarrow R}(\{x\})$ is denoted by ${\overrightarrow R}(x)$.

{\rm(ii)} For given $T \subseteq Y$, the set ${\overleftarrow R}(T)=\{x\in{X}: \exists{y\in{T}}; (x,y)\in{R}\}$ is called the {\it inverse image} of $T$ under $R$. Particularly, ${\overleftarrow R}(Y)$ is called the {\it domain} of $R$ and is denoted by ${\rm Dom}R$.
\end{definition}

\begin{definition}\cite{Barr}\label{zzzz}
A relation $R\in \mathcal{R}(X, Y)$ is said to be

(i) \emph{injective}  if $(x,y)\in R$ and $(x',y)\in R$ implies that $x=x'$.

(ii) \emph{surjective}  if for every $y\in Y$ there is some $x\in X$ so that $(x,y)\in R$.

(iii) \emph{total injective} if for every $x\in X$ there exists $y\in Y$ such that $x$ is the only element related to $y$. That is, if $(x,y)\in R$ and $(x',y)\in R$, then $x=x'$.

(iv) \emph{partial surjective map} if for every $y\in Y$ there exists $x\in X$ such that $y$ is the only element related to $x$. That is,  if $(x,y)\in R$ and $(x,y')\in R$, then $y=y'$.

(v) \emph{monic} if it is left cancelable; that is $R\circ S=R\circ T$ implies $S=T$. 

(vi)  \emph{epic} if it is right cancelable; that is $S\circ R=T\circ R$ implies $S=T$.

(vii)  \emph{well-defined} if $(x,y)\in R$ and $(x,y')\in R$ implies that $y=y'$.

\end{definition}

\begin{remark}\label{prop of BB}
If $R\in \mathcal{R}(X, Y)$ is a partial surjective map, then $\overleftarrow{R}(A)\cap \overleftarrow{R}(B)=\overleftarrow{R}(A\cap B)$.
To prove the non-trivial part, if $x\in \overleftarrow{R}(A)\cap \overleftarrow{R}(B)$, then there exist $a\in A$ and $b\in B$ with $(x,a), (x,b)\in R$. Now, since $R$ is a partial surjective map, we get that $a=b\in A\cap B$ and so $x\in \overleftarrow{R}(A\cap B)$. 
\end{remark}

One can easily prove the following lemma.

\begin{lemma}\label{lem for injec}
Suppose $X$ and $Y$ are two sets and $R\in \mathcal{R}(X , Y)$ is a relation. 

{\rm(i)} Let $R$ be injective. Then,
 $S=S'$, if  ${\overrightarrow R}(S)={\overrightarrow R}(S')$, for every $S,S'\subseteq {\rm Dom}R$.

 {\rm(ii)} Let $R$ be a partial map. Then,
 $T=T'$, if  ${\overleftarrow R}(T)={\overleftarrow R}(T')$, for every $T, T'\subseteq {\rm Im}R$.

\end{lemma}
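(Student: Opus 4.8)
The plan is to prove both statements by a routine double-inclusion element chase, where the role of the hypotheses is clean: the assumption that the subsets lie inside ${\rm Dom}\,R$ (resp. ${\rm Im}\,R$) is exactly what guarantees a witnessing pair in $R$, and the injectivity (resp. partiality) of $R$ is exactly what lets one identify that witness.

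For (i), I would start from an arbitrary $x\in S$. Since $S\subseteq {\rm Dom}\,R$, Definition \ref{def image and domain}(ii) gives some $y\in Y$ with $(x,y)\in R$; hence $y\in {\overrightarrow R}(S)={\overrightarrow R}(S')$, so there is $x'\in S'$ with $(x',y)\in R$. Now injectivity of $R$ (Definition \ref{zzzz}(i)) forces $x=x'$, so $x\in S'$. This proves $S\subseteq S'$, and running the same argument with the roles of $S$ and $S'$ exchanged yields $S'\subseteq S$, hence $S=S'$.

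For (ii), the argument is the exact dual: take $y\in T$; since $T\subseteq {\rm Im}\,R$ there is $x\in X$ with $(x,y)\in R$, so $x\in {\overleftarrow R}(T)={\overleftarrow R}(T')$, and therefore $(x,y')\in R$ for some $y'\in T'$. As $R$ is a partial map (Definition \ref{zzzz}(vii)), $y=y'\in T'$, giving $T\subseteq T'$, and symmetrically $T'\subseteq T$, so $T=T'$.

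There is no real obstacle here — this is the ``one can easily prove'' lemma it is advertised to be. The only subtlety worth noting is that both hypotheses in each part are genuinely used: dropping the containment in ${\rm Dom}\,R$ (resp. ${\rm Im}\,R$) would allow an element with empty $R$-image to escape detection by ${\overrightarrow R}$ (resp. ${\overleftarrow R}$), and dropping injectivity (resp. well-definedness) would leave the recovered witness $x'$ (resp. $y'$) unequal to the original $x$ (resp. $y$), so the inclusion step would fail.
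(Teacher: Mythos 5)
Your proof is correct and follows essentially the same double-inclusion element chase as the paper's own proof: pick an element, use membership in ${\rm Dom}\,R$ (resp. ${\rm Im}\,R$) to produce a witness, transfer it through the equality of images, and invoke injectivity (resp. the partial-map/well-definedness property) to identify the witnesses. Your added remarks on why each hypothesis is needed go slightly beyond the paper but do not change the argument.
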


\begin{proof}
{\rm(i)} Let $x\in S$. Then, there exists $y\in Y$ with $(x,y)\in R$ and so $y\in {\overrightarrow R}(S)=\overrightarrow{R}(S')$. Thus, $y\in \overrightarrow{R}(S')$ and so there exists $x'\in S'$ with $(x',y)\in R$. Now, since $R$ is injective, we get that $x=x'$. So, $x\in S'$. Similarly, we get that $S'\subseteq S$. 

{\rm(ii)} Let $y\in T$. Then, there exists $x\in X$ with $(x,y)\in R$ and so $x\in {\overleftarrow R}(T)=\overrightarrow{R}(T')$. Thus, $x\in \overleftarrow{R}(T')$ and so there exists $y'\in T'$ with $(x, y')\in R$. Now, since $R$ is a partial map, we get that $y=y'$. So, $y\in T'$. Similarly, we get that $T'\subseteq T$.
\end{proof}

 \begin{definition}\cite{Barr}\label{def of inverse}
 Let $X$, $Y$ be two sets and $R\in \mathcal{R}(X, Y)$. Then,

{\rm (i)} $R$ has a right inverse if there exists $S\in \mathcal{R}(Y, X)$ with $R\circ S=id_{_{{\rm Dom}S}}$.

{\rm(ii)} $R$ has a left inverse if there exists $S\in \mathcal{R}(Y, X)$ with $S\circ R=id_{_{{\rm Dom}R}}$.
 \end{definition}

\begin{remark}\label{inverse}
Let $X$, $Y$ be two sets and $R\in \mathcal{R}(X , Y)$. Then,

{\rm (i)} if $R$ has a right inverse, then $R$ is surjective.

{\rm(ii)} if $R$ has a left inverse, then $R$ is injective.
\end{remark}

We also recall from \cite{Anna} that the 
forgetful functor $F:\bf Set\longrightarrow \bf Rel$, which is the identity on objects and takes each map $f:X\longrightarrow Y$ to its underlying relation $\{(x,y)\in X\times Y :f(x)=y\}$, is a left adjoint for the powerset (or image) functor $P:\bf Rel\longrightarrow \bf Set$. This adjunction induces covariant powerset monad on $\bf{Set}$. $\bf{Rel}$ is isomorphic to the Kleisli category for this monad.


\section{The category {\bf Rel}({\bf Nom})}

In this section, we focus on the equivariant relations between the nominal sets rather than the equivariant functions between them and take into consideration the category {\bf Rel}({\bf Nom}). See the definition that follows.

 \begin{definition}\label{starr} 
 For given $G$-sets $X$ and $Y$, the set $\mathcal{R}(X,Y)$ is equipped with the action \label{starr}
$$\cdot :G \times \mathcal{R}(X,Y)\rightarrow \mathcal{R}(X,Y), \ \ \  g\cdot R=\{(g x, g y): (x,y)\in R\},$$ is a $G$-set.

\end{definition}

\begin{remark}\label{popol}
Given the nominal sets $X$ and $Y$,

{\rm (i)} 
a finite set $A\subseteq \mathbb{D}$ is a finite support for $R\in \mathcal{R}(X,Y)$  whenever,
$$\begin{array}{rcl}
\pi\in ({\rm Perm}(\mathbb{D}))_{_{A}} &\Longrightarrow & \pi \cdot R=R\\ 
&\Longrightarrow & R(x)=\pi (\overrightarrow{R}({\pi}^{-1} x)),
\end{array}$$
for every $x\in {\rm Dom}R$.

So, a relation $R:X\longrightarrow Y$ is equivariant if $\pi \cdot R=R$, for every $\pi\in {\rm Perm}(\mathbb{D})$.

{\rm (ii)} if $A$ is a finite support for the relation $R:X\longrightarrow Y$, then $\pi A$ is a finite support of $\pi\cdot R$.

{\rm (iii)} Since, by  Remark \ref{P(X)-is-act}(i), $\mathcal{R}(X,Y)=\mathcal{P}(X\times Y)$ is a ${{\rm Perm}(\mathbb{D}})$-set. Using  Lemma \ref{Xfs is nom}, the set of all finitely supported relations from $X$ to $Y$, denoted by $\mathcal{R}_{\rm fs}(X,Y)$, is a nominal set.
\end{remark}

 \begin{definition}
  Suppose $X$ and $Y$ are two nominal sets ($G$-sets) and $R\in \mathcal{R}(X,Y)$. The relation $R$ is \emph {equivariant} if it is an equivariant subset of $X\times Y$.
 \end{definition}

Now, we give some examples of equivariant relations.

  \begin{example}\label{suprel}
 Given nominal sets $X$ and $Y$, 
 
 {\rm(i)} the relation $\{(x,A)\in X\times \mathcal{P}_{_{\rm f}}(\mathbb{D}): {\rm supp}\,_{_{X}}x \subseteq A\}$, denoted by \linebreak ${\rm inc}:X\longrightarrow \mathcal{P}_{_{\rm f}}(\mathbb{D})$, is an equivariant element of $X\longrightarrow \mathcal{P}_{_{\rm f}}(\mathbb{D})$.
 
 \medskip
 
 {\rm(ii)} the relation $\{(x,x')\in X\times X: {\rm supp}\,x \subseteq {\rm supp}\,x'\}$, denoted by $\leqslant :X\longrightarrow X$, defined in \cite{hosseinabadi}, is an equivariant relation on $X$.

  \medskip

  {\rm(iii)} {\em the support relation} $\{(x,d)\in X\times \mathbb{D}:d\in {\rm supp}\,_{_{X}}x \}$, denoted by \linebreak $supp: X\longrightarrow \mathbb{D}$, is an equivariant element of $X\longrightarrow \mathbb{D}$.
\medskip
 
 {\rm (iv)} the  \emph{freshness relation} $\{(x,y)\in X\times Y: {\rm supp}\,x\cap{\rm supp}\,y=\emptyset\}$, denoted by  $\sharp_{_{X,Y}}:X\longrightarrow Y$, defined in \cite{Pitts2}, is an equivariant relation on $X\times Y$ and it is said that $x$ is \emph{fresh} for $y$. 
     To simplify, we denote the equivariant relation $\sharp_{_{X, X}}: X\longrightarrow X$ by $\sharp_{_{X}}$. 
 \end{example}

 Among the various examples in the preceding example, the freshness relation is a significant and useful one \cite{PH}. We discuss further conditions for freshness relation in  certain circumstances in the next theorem.
 
\begin{theorem}\label{sharp nonempty} Given non-empty nominal sets $X$ and $Y$,   
	
	{\rm (i)} the relation $\sharp_{_{X}}\neq \emptyset$ and ${\rm Dom}\sharp_{_{X}}=X$.

	{\rm (ii)} the relation $\sharp_{_{X}}$ is  symmetric, that is $\sharp_{_{X}}^{-1}=\sharp_{_{X}}$.
	
	{\rm (iii)} if $\mathcal{Z}(X)\neq \emptyset$, then $\sharp_{_{X,Y}}$ is a surjective relation. 
	
	{\rm (iv)} the relation $\sharp_{_{\mathbb{D}, X}}$ is surjective. 
	
	{\rm (v)} for the nominal set ${\rm Perm}(\mathbb{D})$, if $(d,\pi _{1}),(d,\pi_{2})\in \sharp_{_{\mathbb{D}, {\rm Perm}(\mathbb{D})}}$, then $(d, \pi_1\circ\pi_2) \in \sharp_{_{\mathbb{D}, {\rm Perm}(\mathbb{D})}}$. 
	
	{\rm(vi)} the nominal set $X$ is discrete if and only if $\sharp_{_{X}}$ is a reflexive relation. 	\end{theorem}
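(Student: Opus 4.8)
The plan is to establish each part of Theorem~\ref{sharp nonempty} by unwinding the definition of the freshness relation $\sharp_{_{X,Y}} = \{(x,y) : {\rm supp}\,x \cap {\rm supp}\,y = \emptyset\}$ and exploiting the finiteness of supports together with the infinitude of $\mathbb{D}$.

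For (i), I would first note that $\sharp_{_X} \neq \emptyset$ and ${\rm Dom}\,\sharp_{_X} = X$ follow from a single observation: given any $x \in X$, the set ${\rm supp}\,x$ is finite, so $\mathbb{D} \setminus {\rm supp}\,x$ is infinite; pick $d_1, d_2 \notin {\rm supp}\,x$ distinct and set $x' = (d_1\ d_2)x$. Actually the cleanest route is to use Remark~\ref{supp is equi.}: choosing a permutation $\pi$ that moves ${\rm supp}\,x$ entirely off itself (possible since $\mathbb{D}$ is infinite and ${\rm supp}\,x$ finite), we get ${\rm supp}\,(\pi x) = \pi\,{\rm supp}\,x$ disjoint from ${\rm supp}\,x$, so $(x,\pi x) \in \sharp_{_X}$. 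This gives both non-emptiness and that every $x$ lies in the domain. Part (ii) is immediate from the symmetry of the condition ${\rm supp}\,x \cap {\rm supp}\,y = \emptyset$ in $x$ and $y$. For (iii), if $z \in \mathcal{Z}(X)$ then ${\rm supp}\,z = \emptyset$ (a zero element is fixed by all permutations, hence supported by $\emptyset$), so $(z,y) \in \sharp_{_{X,Y}}$ for every $y \in Y$, witnessing surjectivity. For (iv), given any $x \in X$, again ${\rm supp}\,x$ is finite, so choose $d \in \mathbb{D} \setminus {\rm supp}\,x$; since ${\rm supp}_{_{\mathbb{D}}}d = \{d\}$ (Example~\ref{example nom}(i)), we have $\{d\} \cap {\rm supp}\,x = \emptyset$, so $(d,x) \in \sharp_{_{\mathbb{D},X}}$, and surjectivity follows.

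For (v), I would use the standard fact (derivable from Lemma~\ref{suppor of fs map} applied to the group multiplication, or directly) that ${\rm supp}_{_{{\rm Perm}(\mathbb{D})}}\pi \subseteq \{e \in \mathbb{D} : \pi(e) \neq e\}$ for a finitary permutation $\pi$; in fact one checks that if a transposition $(e_1\ e_2)$ with $e_1, e_2 \notin \{e : \pi(e)\neq e\}$ is applied, it conjugates $\pi$ to itself, so the support of $\pi$ is contained in its set of non-fixed points. The hypothesis $(d,\pi_1), (d,\pi_2) \in \sharp_{_{\mathbb{D},{\rm Perm}(\mathbb{D})}}$ then says $d$ is fixed by both $\pi_1$ and $\pi_2$, hence $d$ is fixed by $\pi_1 \circ \pi_2$, so the non-fixed points of $\pi_1\circ\pi_2$ avoid $d$, giving ${\rm supp}\,(\pi_1\circ\pi_2) \cap \{d\} = \emptyset$ and thus $(d,\pi_1\circ\pi_2) \in \sharp_{_{\mathbb{D},{\rm Perm}(\mathbb{D})}}$.

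For (vi): if $X$ is discrete, every $x$ has ${\rm supp}\,x = \emptyset$, so ${\rm supp}\,x \cap {\rm supp}\,x = \emptyset$ and $(x,x) \in \sharp_{_X}$, i.e.\ $\sharp_{_X}$ is reflexive. Conversely, if $\sharp_{_X}$ is reflexive, then for every $x \in X$ we have ${\rm supp}\,x \cap {\rm supp}\,x = {\rm supp}\,x = \emptyset$, so every element is fixed by every finitary permutation, meaning $X$ is discrete. I expect the main subtlety to be part (v): one must pin down exactly what ${\rm supp}_{_{{\rm Perm}(\mathbb{D})}}\pi$ is (or at least that it is contained in the non-fixed-point set of $\pi$), rather than conflating it with that set outright; the rest of the argument is a routine finiteness/infinitude bookkeeping, so I would state that lemma about supports of permutations carefully and then the conclusion drops out. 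All other parts are short direct verifications from the definitions.
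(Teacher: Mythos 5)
Parts (i)--(iv) and (vi) are correct and essentially the paper's own arguments; the only cosmetic difference is in (i), where the paper splits into the cases $\mathcal{Z}(X)\neq\emptyset$ and $\mathcal{Z}(X)=\emptyset$ while you handle every $x$ uniformly by choosing $\pi$ with $\pi\,{\rm supp}\,x\cap{\rm supp}\,x=\emptyset$ and invoking Remark~\ref{supp is equi.}; that is fine and slightly cleaner.

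The genuine gap is in part (v), where your inference runs in the wrong direction. You establish only the containment ${\rm supp}\,\pi\subseteq\{e\in\mathbb{D}:\pi(e)\neq e\}$ and then assert that the hypothesis $d\notin{\rm supp}\,\pi_1$, $d\notin{\rm supp}\,\pi_2$ ``says $d$ is fixed by both $\pi_1$ and $\pi_2$.'' That does not follow from the containment you proved: knowing that the support sits inside the set of moved atoms says nothing about atoms lying outside the support. To conclude $\pi_i(d)=d$ from $d\notin{\rm supp}\,\pi_i$ you need the \emph{reverse} containment $\{e:\pi(e)\neq e\}\subseteq{\rm supp}\,\pi$, whereas your last step ($d$ fixed by $\pi_1\circ\pi_2$ implies $d\notin{\rm supp}\,(\pi_1\circ\pi_2)$) needs the containment you did prove. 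So the argument tacitly uses the equality ${\rm supp}\,\pi=\{e:\pi(e)\neq e\}$ --- precisely the conflation you warn against in your closing remark. The fix is either to prove that equality for the conjugation action (the missing inclusion follows from a freshness argument: if $\pi(a)\neq a$ and $a\notin C$ for a support $C$, a suitably fresh transposition $(a\ b)$ fixes $C$ pointwise yet fails to fix $\pi$), or to bypass fixed points altogether as the paper does: multiplication on ${\rm Perm}(\mathbb{D})$ is equivariant, so ${\rm supp}\,(\pi_1\circ\pi_2)\subseteq{\rm supp}\,\pi_1\cup{\rm supp}\,\pi_2$ (e.g.\ by Lemma~\ref{suppor of fs map}), and $d\notin{\rm supp}\,\pi_1\cup{\rm supp}\,\pi_2$ immediately gives $(d,\pi_1\circ\pi_2)\in\sharp_{_{\mathbb{D},{\rm Perm}(\mathbb{D})}}$. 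You mention this route only parenthetically; it should carry the proof.
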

 \begin{proof}
 {\rm (i)}	If $\mathcal{Z}(X)\neq \emptyset$, then $(\mathcal{Z}(X)\times X)\cup (X\times \mathcal{Z}(X))\subseteq \sharp_{_{X}}$. If $\mathcal{Z}(X)=\emptyset$, then for each $x\in X$ with ${\rm supp}\,x\neq \emptyset$ there exists $\pi \in {\rm Perm}(\mathbb{D})$ such that ${\rm supp}\,\pi x\cap {\rm supp}\,x=\emptyset$ and hence, $(x, \pi x)\in \sharp_{_{X}}$. 
 
  \medskip
 
 {\rm (ii)} It is trivial.
 
  \medskip

 {\rm(iii)}  The zero elements are fresh for every $y\in Y$, so $\sharp_{_{X,Y}}$ is surjective. Furthermore, if $X=\{\theta\}$ is a singleton nominal set, then $\sharp_{_{\lbrace\theta\rbrace, Y}}$ is also injective.

  \medskip

 {\rm(iv)}  Using the Choose-a-Fresh-Name Principle, there exists $d\in\mathbb{D}$ with $d\notin {\rm supp}_{_{X}}\,x$, for every $x\in X$, which means $(d,x)\in \sharp_{_{\mathbb{D}, X}}$, the result is obtained.
 
     \medskip
  
   {\rm(v)} Since $d\notin {\rm supp}\,\pi_1 \cup {\rm supp}\,\pi_2$ and ${\rm supp}\,\pi _{1}\circ \pi_{2}\subseteq{\rm supp}\,\pi _{1}\cup {\rm supp}\,\pi _{2}$, we get the desired result.
   
    \medskip
   
   {\rm(vi)} The relation $\sharp_{_{X}}$ is  reflexive if and only if $(x,x)\in \sharp_{_{X}}$, for every $x\in X$, if and only if ${\rm supp}\,x=\emptyset$, for every $x\in X$.   
 \end{proof}

The following theorem is simple to prove.
 \begin{theorem}
The composition of two binary equivariant relations is an equivariant relation.
\end{theorem}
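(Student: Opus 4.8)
The plan is to show directly that for composable equivariant relations $R:X\longrightarrow Y$ and $S:Y\longrightarrow Z$ between nominal sets (equivalently, between $G$-sets), the relational composite $S\circ R\subseteq X\times Z$ is fixed by the action on $\mathcal{R}(X,Z)$ introduced in Definition \ref{starr}, that is $\pi\cdot(S\circ R)=S\circ R$ for every $\pi\in{\rm Perm}(\mathbb{D})$; by Remark \ref{popol}(i) this equality is precisely what it means for $S\circ R$ to be an equivariant relation, and of course $S\circ R$ is a relation to begin with, so nothing else is needed.

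First I would unwind the definitions. A typical element of $\pi\cdot(S\circ R)$ has the form $(\pi x,\pi z)$ with $(x,z)\in S\circ R$, so by the definition of relational composition there is a witness $y\in Y$ with $(x,y)\in R$ and $(y,z)\in S$. Since $R$ is equivariant we have $\pi\cdot R=R$, hence $(\pi x,\pi y)\in R$; similarly $(\pi y,\pi z)\in S$. Thus $\pi y$ witnesses $(\pi x,\pi z)\in S\circ R$, which proves the inclusion $\pi\cdot(S\circ R)\subseteq S\circ R$ for every $\pi$.

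Then the reverse inclusion comes for free: the inclusion just established, applied to the permutation $\pi^{-1}$, gives $\pi^{-1}\cdot(S\circ R)\subseteq S\circ R$, and applying the (inclusion-preserving) action $\pi\cdot(-)$ to both sides together with $\pi\cdot(\pi^{-1}\cdot T)=T$ yields $S\circ R\subseteq\pi\cdot(S\circ R)$. Hence $\pi\cdot(S\circ R)=S\circ R$ for all $\pi\in{\rm Perm}(\mathbb{D})$, so $S\circ R$ is an equivariant subset of $X\times Z$ and therefore an equivariant relation, which finishes the proof.

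There is essentially no real obstacle here; the only point deserving a moment's care is that the existential witness $y$ in the definition of relational composition must be transported along $\pi$ to the witness $\pi y$, and this is exactly where equivariance of the two factors $R$ and $S$ is invoked. The two-sided equality is most cleanly obtained via the $\pi^{-1}$ trick rather than by a separate direct argument, and the same computation goes through verbatim when $X,Y,Z$ are arbitrary $G$-sets and $\pi$ ranges over $G$.
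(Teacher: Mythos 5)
Your proof is correct and is exactly the routine verification the paper leaves to the reader (it states only that the theorem ``is simple to prove'' and gives no argument): transporting the existential witness $y$ to $\pi y$ using equivariance of $R$ and $S$, and obtaining the reverse inclusion via $\pi^{-1}$, is the standard argument. Nothing is missing.
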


 \begin{corollary}
 	Nominal sets ($G$-sets) and the equivariant relations between them, together with the relational composition and diagonal relations as identities, form a category ${\bf Rel}({\bf Nom})$ ({\bf Rel}-$(G$-{\bf Set}$)$).
 \end{corollary}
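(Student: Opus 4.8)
The plan is to realize ${\bf Rel}({\bf Nom})$ (and, identically, ${\bf Rel}$-$(G$-${\bf Set})$) as a subcategory of ${\bf Rel}$, so that all I must verify is: the proposed objects are sets; the proposed hom-collections are sets closed under the composition of ${\bf Rel}$; and they contain the required identity morphisms. Associativity and the unit laws then come for free, since the objects, morphisms, composition rule and identities of ${\bf Rel}({\bf Nom})$ are literally those of ${\bf Rel}$ restricted to nominal sets and equivariant relations, and ${\bf Rel}$ is already a category by Subsection 1.3.

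First I would observe that every nominal set is a set and that, for nominal sets $X$ and $Y$, the equivariant relations $X\longrightarrow Y$ form a set, being a subcollection of $\mathcal{P}(X\times Y)$; in fact by Remark \ref{P(X)-is-act}(ii) every equivariant relation has empty support, so this subcollection sits inside the nominal set $\mathcal{R}_{\rm fs}(X,Y)$ of Remark \ref{popol}(iii) (not that this last point is needed for the axioms). Second, closure of the hom-collections under composition is exactly the theorem immediately preceding the corollary: if $R:X\longrightarrow Y$ and $S:Y\longrightarrow Z$ are equivariant, then so is $S\circ R:X\longrightarrow Z$. Third, I would check that each diagonal $\Delta_X=\{(x,x):x\in X\}$ is an equivariant subset of $X\times X$: for every $\pi\in{\rm Perm}(\mathbb{D})$ (resp. $g\in G$), $\pi\cdot\Delta_X=\{(\pi x,\pi x):x\in X\}=\Delta_X$ because $x\mapsto\pi x$ is a bijection of $X$; hence $\Delta_X$ is a morphism $X\longrightarrow X$ in ${\bf Rel}({\bf Nom})$.

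With these three points established, the category axioms are inherited verbatim from ${\bf Rel}$: $(T\circ S)\circ R=T\circ(S\circ R)$ and $R\circ\Delta_X=R=\Delta_Y\circ R$ hold because they hold for arbitrary set-theoretic relations. I do not expect any real obstacle here — the only genuinely new computation is the one-line verification that diagonals are equivariant, and even well-definedness of composition on hom-collections is supplied by the preceding theorem; everything else is bookkeeping about subcategories of ${\bf Rel}$.
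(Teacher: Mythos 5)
Your proposal is correct and follows essentially the same route the paper intends: the corollary is drawn directly from the preceding theorem on closure of equivariant relations under relational composition, with the equivariance of the diagonals and the inheritance of associativity and unit laws from ${\bf Rel}$ left as the routine remaining checks, which you carry out explicitly. Nothing further is needed.
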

 
  Here, we are going to study some categorical properties in this category.
  
  \begin{theorem}\label{pXY is Gset}
    Let $X$ and $ Y$ be two $G$-sets. Then, the set $\mathcal{R}(X,Y)$ with the action     
   $*: G\times{\mathcal{R}(X,Y)}\longrightarrow{\mathcal{R}(X,Y)}$ defined by $$(g,R)\rightsquigarrow g\ast R=\{(x, g y): (g^{-1}{x},y)\in R\},$$ is a $G$-set.
   \end{theorem}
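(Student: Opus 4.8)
The plan is to verify directly that the map $*$ is a well-defined action of $G$ on the set $\mathcal{R}(X,Y)$; since $\mathcal{R}(X,Y)$ is just $\mathcal{P}(X\times Y)$ as a set, the only thing to check is that the two group-action axioms hold, namely $g_1*(g_2*R)=(g_1g_2)*R$ and $e*R=R$. First I would observe that $g*R$ is indeed a subset of $X\times Y$: if $(g^{-1}x,y)\in R\subseteq X\times Y$ then $y\in Y$, hence $gy\in Y$, and $x\in X$, so $(x,gy)\in X\times Y$. This confirms $*$ lands in $\mathcal{R}(X,Y)$.

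Next I would check the identity axiom: $e*R=\{(x,ey):(e^{-1}x,y)\in R\}=\{(x,y):(x,y)\in R\}=R$, using that $e^{-1}=e$ and the fact that $e$ acts trivially on both $X$ and $Y$. For the compatibility axiom, I would compute $g_1*(g_2*R)$ by unwinding the definitions: a pair $(x,z)$ lies in $g_1*(g_2*R)$ iff there is some $y$ with $z=g_1 y$ and $(g_1^{-1}x,y)\in g_2*R$; the latter means there is some $w$ with $y=g_2 w$ and $(g_2^{-1}g_1^{-1}x,w)\in R$. Substituting, $z=g_1 g_2 w$ and $((g_1 g_2)^{-1}x,w)\in R$, which is exactly the condition for $(x,z)\in (g_1 g_2)*R$ after renaming $w$. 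Conversely, any $(x,z)\in(g_1g_2)*R$ comes from a $w$ with $z=(g_1g_2)w$ and $((g_1g_2)^{-1}x,w)\in R$, and setting $y=g_2 w$ recovers membership in $g_1*(g_2*R)$. Here I would use associativity of the $G$-actions on $X$ and $Y$ and the identity $(g_1 g_2)^{-1}=g_2^{-1}g_1^{-1}$.

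The only mild subtlety — and the closest thing to an obstacle — is bookkeeping with the inverses: the second coordinate is transported by $g$ itself while the first coordinate is tested via $g^{-1}$, so one must be careful that the existential witnesses compose in the right order and that no spurious constraints are introduced. Once the substitutions are tracked carefully this is purely mechanical, so I do not expect any genuine difficulty; the statement reduces to the two routine verifications above. (One may additionally remark, as in Remark~\ref{popol}, that by Lemma~\ref{Xfs is nom} the finitely supported elements form a nominal set, but that is not needed for the $G$-set claim itself.)
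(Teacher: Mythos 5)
Your proposal is correct and follows essentially the same route as the paper: both arguments verify the two action axioms directly, unwinding the existential witness in $g_1*(g_2*R)$ and using $(g_1g_2)^{-1}=g_2^{-1}g_1^{-1}$ together with associativity of the actions on $X$ and $Y$, and then checking $e*R=R$. Your added observation that $g*R$ indeed lies in $\mathcal{P}(X\times Y)$ is a harmless extra that the paper leaves implicit.
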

   
    \begin{proof}
    For every $x\in X$ and $g_{1}, g_{2}\in G$, we have:
  \begin{align*}\label{lcr}
  (x,y)\in{({g_{1}}{g_{2}})\ast{R}}\ 
  &{\Longleftrightarrow}\, \ {{\exists}y'\in Y;\ 
  y=({g_{1}}{g_{2}})y'\text{ and }
  ~(({g_{1}}{g_{2}})^{-1}x,y')\in R} \\
  &{\Longleftrightarrow}\  {{ \exists}y'\in Y;\  (({g_{2}}^{-1} 
  {g_{1}}^{-1})x,y')}\in R\\
  &\Longleftrightarrow{{\exists}y'\in Y;
  ({g_{2}}^{-1}({g_{1}}^{-1}x),y')}\in{R}\\
  &\Longleftrightarrow{{\exists}y'\in Y;\  ({g_{1}}^{-1}x,g_{2}y')
  \in{g_{2}}\ast{R}}\\
  &\Longleftrightarrow {\exists} y'\in Y; \ (x,g_{1}(g_{2}y'))\in{g_{1}}\ast{(g_{2}\ast{R})}\\
  &\Longleftrightarrow{{\exists} y'\in Y; \ ~y=g_{1} (g_{2} {y'}),~(x,y)\in{g_{1}}\ast{(g_{2}\ast{R})}}\\
  &\Longleftrightarrow{(x,y)\in{g_{1}}\ast{(g_{2}\ast{R})}}.
  \end{align*}
  
   So, $({g_{1}}{g_{2}})\ast{R}=g_{1}\ast{(g_{2}\ast{R})}$. Also, $$e*R=\{(x,e\,y): (e\,x,y)\in R\}=\{(x,y): (x,y)\in R\}=R.{\qedhere}$$
  \end{proof}
More characterizations of equivariant relations between two $G$-sets are provided in the subsequent theorem.
  \begin{theorem}\label{rrr}
  	Let $X ,Y$ be two $G$-sets, $R\in \mathcal{R}(X,Y)$ and $g\in G$. Then, the following statements are equivalent.
  	
  	{\rm(i)} The relation $R$ is equivariant;
  	
  	{\rm(ii)} The relation $R^{-1}$ is  equivariant;
  	
  	{\rm(iii)}  For every $y\in {\rm Im}R$, we have $g ({\overleftarrow R}(y))={\overleftarrow R}(g y)$;
  	
  	{\rm(iv)} For every $x\in {\rm Dom}R$, we have $g ({\overrightarrow R}(x))={\overrightarrow R}(g x)$;
  	
  	{\rm(v)} $g*R=R$;
  	
  	{\rm(vi)} The relation $R\in \mathcal{R}(X, Y)$ is equivariant; i.e. $R$ is a zero element of $\mathcal{R}(X, Y)$ for the action $``*"$ defined in {\rm Theorem \ref{pXY is Gset}}.
  \end{theorem}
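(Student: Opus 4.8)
The plan is to prove the cycle of implications by moving through the list and exploiting the two actions ``$\cdot$'' (Definition \ref{starr}) and ``$*$'' (Theorem \ref{pXY is Gset}), together with the observation that for an equivariant relation both actions coincide with the identity on $R$. First I would unwind the definition of equivariance: $R$ is equivariant iff $R$ is a $G$-subset of $X\times Y$, i.e. $(x,y)\in R$ implies $(gx,gy)\in R$ for every $g\in G$; applying this to $g$ and to $g^{-1}$ gives the equivalence $(x,y)\in R \Leftrightarrow (gx,gy)\in R$. From this biconditional the equivalence of (i) and (ii) is immediate, since $(y,x)\in R^{-1}\Leftrightarrow (x,y)\in R$, and the condition defining equivariance of $R^{-1}$ is literally the same biconditional with the roles of the two coordinates swapped.

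Next I would handle (i) $\Leftrightarrow$ (iv) and, symmetrically via (ii), (i) $\Leftrightarrow$ (iii). For (i) $\Rightarrow$ (iv): fix $x\in{\rm Dom}R$ and $g\in G$; if $y\in\overrightarrow{R}(x)$ then $(x,y)\in R$, so $(gx,gy)\in R$, hence $gy\in\overrightarrow{R}(gx)$, giving $g\cdot\overrightarrow{R}(x)\subseteq\overrightarrow{R}(gx)$; applying the same argument to $g^{-1}$ and $gx$ yields the reverse inclusion. For (iv) $\Rightarrow$ (i): if $(x,y)\in R$ then $x\in{\rm Dom}R$ and $y\in\overrightarrow{R}(x)$, so $gy\in g\cdot\overrightarrow{R}(x)=\overrightarrow{R}(gx)$, which means $(gx,gy)\in R$; this is exactly the $G$-subset condition. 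The equivalence (i) $\Leftrightarrow$ (iii) follows either by the identical argument applied to $\overleftarrow{R}$, or by passing through (ii) and noting $\overleftarrow{R}(y)=\overrightarrow{R^{-1}}(y)$.

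Then I would connect to the two actions. For (i) $\Leftrightarrow$ (v), note $g\cdot R=\{(gx,gy):(x,y)\in R\}$; equivariance as a $G$-subset says precisely that this set is contained in $R$ for every $g$, and applying this to $g^{-1}$ gives $R\subseteq g\cdot R$, so $g\cdot R=R$ for all $g$ — and conversely $g\cdot R=R$ for all $g$ forces the $G$-subset condition. For (i) $\Leftrightarrow$ (vi) I would chase the definition of ``$*$'': $(x,y)\in g*R\Leftrightarrow (g^{-1}x,g^{-1}y)\in R$, so $g*R=g\cdot R$ whenever $R$ is equivariant, and more directly $g*R=R$ for all $g$ is equivalent to $(g^{-1}x,g^{-1}y)\in R\Leftrightarrow (x,y)\in R$, which is the biconditional characterizing equivariance after renaming $g^{-1}$ as $g$. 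Finally, by Remark \ref{popol}(i) together with Remark \ref{P(X)-is-act}(ii), being a zero element of $\mathcal{R}(X,Y)=\mathcal{P}(X\times Y)$ for the action ``$*$'' is the same as having empty support, which is the same as being an equivariant subset; this closes the loop to (i).

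I do not expect a serious obstacle here: every implication is a short definitional unwinding. The one point that needs a little care is keeping the two actions ``$\cdot$'' and ``$*$'' straight and checking that they agree on equivariant relations (they differ in general), so that statements (v) and (vi) are not accidentally conflated; I would make that comparison explicit. A second minor point is that items (iii) and (iv) quantify $x$ over ${\rm Dom}R$ and $y$ over ${\rm Im}R$ respectively, so when proving (iv) $\Rightarrow$ (i) one must first observe that $(x,y)\in R$ indeed places $x$ in ${\rm Dom}R$, so that the hypothesis of (iv) applies — a triviality, but worth stating to make the quantifier ranges harmless.
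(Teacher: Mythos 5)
Your proof is correct and is, step for step, the same definitional unwinding as the paper's; the only difference is organizational (you prove each of (ii)--(vi) equivalent to (i) directly, while the paper chains a single cycle (i)$\Rightarrow$(ii)$\Rightarrow\cdots\Rightarrow$(vi)$\Rightarrow$(i)). One side remark of yours is wrong, though harmlessly so: the actions ``$\cdot$'' and ``$*$'' do not ``differ in general'' --- the paper's Theorem~\ref{in} shows $g*R=g\cdot R$ for \emph{every} relation $R$, which is exactly why your handling of (v) through $g\cdot R$ goes through.
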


  \begin{proof}
  	(i$\Rightarrow$ii) Given each $(x,y) \in R^{-1}$ and $g \in G$, we have $(y,x) \in R$. Since
  	R is equivariant $(gy,gx) \in R$ and so $(gx,gy) \in R^{-1}$.
  
  \medskip
  	
  	${\rm(ii\Rightarrow  iii)}$ Suppose $R^{-1}$ is an equivariant relation. Then we have:
  	\begin{align*}
  		x\in g({\overleftarrow R}(y)) & \Longleftrightarrow  g^{-1}x\in {\overleftarrow R}(y)\\
  		& \Longleftrightarrow   (g^{-1}x, y)\in R\\
  		&\Longleftrightarrow   (y, g^{-1}x)\in R^{-1}\\
  		& \Longleftrightarrow  (g y, x)\in R^{-1}\\
  		& \Longleftrightarrow   (x, gy)\in R\\
  		&\Longleftrightarrow  x\in {\overleftarrow R}(gy).
  	\end{align*}
  
   \medskip
  	
  	(iii$\Rightarrow$iv) 
  	\begin{align*}
  		y\in g({\overrightarrow R}(x)) & \Longleftrightarrow  g^{-1}y\in {\overrightarrow R}(x)\\
  		&  \Longleftrightarrow   (x, g^{-1} y)\in R\\
  		&  \Longleftrightarrow   x\in \overleftarrow{R}(g^{-1}y)\\
  		& \Longleftrightarrow  x\in g^{-1} \overleftarrow{R}(y)\\
  		& \Longleftrightarrow   g x\in \overleftarrow{R}(y)\\
  		& \Longleftrightarrow  y\in \overrightarrow{R}(gx).
  	\end{align*}

  	 \medskip

  	$\rm(iv\Rightarrow \rm v)$ Suppose $g ({\overrightarrow R}(x))={\overrightarrow R}(g x)$, for every $x\in{\rm Dom}R$ and $g\in G$. Then  we have:
  	
  	\begin{align*}
  		(x,y)\in g*R \, &  \Longleftrightarrow \exists \, y'\in{Y}; \, ~y=gy' \ \text{ and }~(g^{-1}x,y')\in R\\
  		&  \Longleftrightarrow \exists \,  y'\in{Y};  ~ y'\in {\overrightarrow R}(g^{-1}x)\\
  		&   \Longleftrightarrow \exists \,  y'\in{Y};  ~ y'\in g^{-1}({\overrightarrow R}(x))\\
  		&   \Longleftrightarrow \exists \,  y'\in{Y}; ~ y=gy' \text{and} \ \, gy'\in {\overrightarrow R}(x)\\
  		&  \Longleftrightarrow   y\in {\overrightarrow R}(x)\\
  		&  \Longleftrightarrow \, (x,y)\in R.
  	\end{align*}

  	 \medskip
  	
  	$\rm(v \Rightarrow \rm vi)$ It is trivial.
  
   \medskip
  	
  	$\rm(vi \Rightarrow \rm i)$ Suppose $g*R=R$. Then we have:
  	\begin{align}
  		(x,y)\in R&\Longleftrightarrow\ (x,y)\in g*R;~ \text{for every}~g\in G\tag*{}\\
  		&\Longleftrightarrow\ \exists y'\in{Y};~y=gy'\text{ and }~ (g^{-1}x,y^{\prime})\in R\tag*{}\\
  		&\Longleftrightarrow\ \exists y'\in{Y};~ y'=g^{-1}y\text{ and }~y'\in {\overrightarrow R}(g^{-1}x)\tag*{}\\
  		&\Longleftrightarrow\ g^{-1}y\in {\overrightarrow R}(g^{-1}x)\tag*{}\\
  		&\Longleftrightarrow\ (g^{-1}x,g^{-1}y)\in R; ~\text{for every}~g\in G. \tag*{\qedhere}
 	\end{align}
  \end{proof}

   \begin{corollary}
{\rm (i)} According to {\rm Theorem \ref{rrr}(v)}, for any  nominal sets (or in general  $G$-sets) $X$ and $Y$, the zero elements of  $\mathcal{R}(X,Y)$ (or equivalently, the relations with empty support) are exactly the equivariant relations from $X$ to $Y$.

 \medskip

{\rm(ii)} For any  nominal sets $X$ and $Y$, $\mathcal{Z}(\mathcal{R}_{{\rm fs}}(X, Y))=\mathcal{Z}(\mathcal{R}(X, Y))$.

\end{corollary}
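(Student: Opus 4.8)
The plan is to read both statements off Theorem~\ref{rrr} together with the elementary facts about zero (fixed) elements recalled in Section~1, so that essentially no computation is needed.

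For~(i) I would argue as follows. By definition, $R\in\mathcal{R}(X,Y)$ is a zero element of the $G$-set $\bigl(\mathcal{R}(X,Y),*\bigr)$ of Theorem~\ref{pXY is Gset} exactly when $g*R=R$ for every $g\in G$, and by the chain of equivalences (i)$\Leftrightarrow$(v)$\Leftrightarrow$(vi) in Theorem~\ref{rrr} this holds iff $R$ is equivariant, i.e.\ iff $g\cdot R=R$ for all $g$, which is precisely the condition that $R$ be a zero element for the action ``$\cdot$'' of Definition~\ref{starr}. Thus the two $G$-set structures carried by $\mathcal{R}(X,Y)$ have the same fixed points, namely the equivariant relations, so the phrase ``zero elements of $\mathcal{R}(X,Y)$'' is unambiguous. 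Finally, to justify the parenthetical ``relations with empty support'', I would invoke Remark~\ref{P(X)-is-act}(ii): regarding $\mathcal{R}(X,Y)=\mathcal{P}(X\times Y)$, a subset of $X\times Y$ is supported by $\emptyset$ precisely when it is an equivariant subset, i.e.\ an equivariant relation. Chaining these gives ``$R$ is a zero element'' $\Leftrightarrow$ ``${\rm supp}\,R=\emptyset$'' $\Leftrightarrow$ ``$R$ is equivariant'', which is the assertion.

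For~(ii) I would prove the two inclusions. Since $\mathcal{R}_{\rm fs}(X,Y)$ is a $G$-subset of $\mathcal{R}(X,Y)$ (Lemma~\ref{Xfs is nom}, Remark~\ref{popol}(iii)) on which the action is the restriction of that on $\mathcal{R}(X,Y)$, any $R\in\mathcal{R}_{\rm fs}(X,Y)$ fixed by all of $G$ is also fixed by all of $G$ in $\mathcal{R}(X,Y)$; this gives $\mathcal{Z}(\mathcal{R}_{\rm fs}(X,Y))\subseteq\mathcal{Z}(\mathcal{R}(X,Y))$. Conversely, if $R\in\mathcal{Z}(\mathcal{R}(X,Y))$ then by~(i) $R$ has empty support, hence $R$ is finitely supported and so $R\in\mathcal{R}_{\rm fs}(X,Y)$; being a zero element of the ambient $G$-set, $R$ is a fortiori a zero element of the sub-$G$-set $\mathcal{R}_{\rm fs}(X,Y)$. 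This yields the reverse inclusion, and hence equality. Equivalently, one may use the general identity $\mathcal{Z}(A)=\mathcal{Z}(W)\cap A$ for a $G$-subset $A\subseteq W$, together with the inclusion $\mathcal{Z}(\mathcal{R}(X,Y))\subseteq\mathcal{R}_{\rm fs}(X,Y)$ supplied by~(i).

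I do not expect any genuine obstacle: the corollary is bookkeeping on top of Theorem~\ref{rrr}. The only point that deserves a moment's care — and the one I would make explicit — is that $\mathcal{R}(X,Y)$ a priori carries two different $G$-actions, ``$\cdot$'' and ``$*$'', so one must note (via Theorem~\ref{rrr}) that they have the same fixed points before speaking of ``the'' zero elements of $\mathcal{R}(X,Y)$.
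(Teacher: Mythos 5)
Your argument is correct and matches the paper's intent exactly: the paper offers no separate proof, treating the corollary as an immediate consequence of Theorem~\ref{rrr}(v)--(vi) together with Remark~\ref{P(X)-is-act}(ii), which is precisely the chain of equivalences you spell out. Your extra remark that the two actions ``$\cdot$'' and ``$*$'' have the same fixed points is a sensible clarification (and is in fact strengthened by the paper immediately afterwards in Theorem~\ref{in}, which shows the two actions coincide).
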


  \begin{theorem}\label{in}
 Let $X$ and $Y$ be $G$-sets and $R\in \mathcal{R}(X,{Y})$. Then, $g{\ast{R}}=g\cdot R$, for every $g\in {G}$.
 \end{theorem}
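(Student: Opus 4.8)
The plan is to show that, for a fixed $g\in G$, the two relations $g\cdot R$ and $g\ast R$ are literally the same subset of $X\times Y$, namely
$$\{(x,y)\in X\times Y : (g^{-1}x,\, g^{-1}y)\in R\}.$$
First I would unwind Definition \ref{starr}: a pair $(x,y)$ lies in $g\cdot R$ exactly when $(x,y)=(gx',gy')$ for some $(x',y')\in R$. Since the action of $g$ is a bijection of $X$ (resp.\ of $Y$), this forces $x'=g^{-1}x$ and $y'=g^{-1}y$, so membership in $g\cdot R$ is equivalent to $(g^{-1}x,g^{-1}y)\in R$. Thus $g\cdot R$ equals the displayed set.

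Next I would unwind the definition of $g\ast R$ from Theorem \ref{pXY is Gset}: $(x,y)\in g\ast R$ iff there exists $y'\in Y$ with $y=gy'$ and $(g^{-1}x,y')\in R$. The equation $y=gy'$ already determines $y'=g^{-1}y$ uniquely, so the existential quantifier carries no extra content and the condition collapses to $(g^{-1}x,g^{-1}y)\in R$. Hence $g\ast R$ also equals the displayed set, and therefore $g\ast R=g\cdot R$. Equivalently, the whole argument can be packaged as the chain of equivalences
$$(x,y)\in g\cdot R \iff (g^{-1}x,g^{-1}y)\in R \iff (x,y)\in g\ast R.$$

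I do not expect any genuine obstacle here; the statement is a routine bookkeeping identity comparing two group actions on $\mathcal{R}(X,Y)$. The only point deserving a line of care is the treatment of the existential quantifier in the definition of $\ast$: one must note that $y'$ is uniquely recoverable as $g^{-1}y$, so that nothing is lost in passing between the two descriptions. (One could also phrase this as saying $g\ast(-)$ and $g\cdot(-)$ agree because both are the map induced on subsets of $X\times Y$ by the bijection $(x,y)\mapsto(gx,gy)$, but since $R$ is arbitrary and need not be equivariant, I prefer to keep the elementwise computation explicit rather than invoke Theorem \ref{rrr}.)
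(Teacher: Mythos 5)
Your proposal is correct and follows essentially the same route as the paper: both arguments unwind the two definitions elementwise and observe that each of $g\cdot R$ and $g\ast R$ reduces to the condition $(g^{-1}x,g^{-1}y)\in R$, the paper merely phrasing this via the sets $\overrightarrow{R}(g^{-1}x)$ rather than as an explicit common subset of $X\times Y$. Your remark that the existential quantifier in the definition of $\ast$ collapses because $y'=g^{-1}y$ is forced is exactly the step the paper's chain of equivalences also relies on.
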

 
\begin{proof}
Suppose $X$ and $Y$ are $G$-sets and $R\in \mathcal{R}(X,{Y})$, for every $x\in X$,  we have:
\begin{align}
y\in (\overrightarrow{g * R})(x) &\Longleftrightarrow (x,y)\in g * R\tag*{}\\
&\Longleftrightarrow \exists y'\in Y;~y=g y',~ (g^{-1}x,y' )\in R \tag*{}\\
&\Longleftrightarrow \exists y'\in Y ;~y'\in \overrightarrow{R}(g^{-1}x)\tag*{}\\
&\Longleftrightarrow \exists y'\in Y;~y'=g^{-1}  y,~g^{-1}  y\in \overrightarrow{R}(g^{-1}x)\tag*{}\\
&\Longleftrightarrow y\in{g\overrightarrow{R}(g^{-1}x)}\tag*{}\\
&\Longleftrightarrow y\in (\overrightarrow{g \cdot R})(x).
\tag*{\qedhere}
\end{align}
\end{proof}

  \begin{proposition}\label{comper}
 Suppose $X,Y$ are nominal sets and $R\in \mathcal{R}_{{\rm fs}}(X, Y)$. So
 
 {\rm (i)} if $S\in \mathcal{P}_{\rm fs}(X)$, then ${\rm supp}\,\overrightarrow{R}(S)\subseteq {\rm supp}\,R \cup {\rm supp}\,S$. 
 
 {\rm (ii)} if $S'\in \mathcal{P}_{\rm fs}(Y)$, then ${\rm supp}\,\overleftarrow{R}(S')\subseteq {\rm supp}\,R \cup {\rm supp}\,S'$.
\end{proposition}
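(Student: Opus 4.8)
The plan is to use the transposition criterion for supports (Remark \ref{supp for G}(i)): a finite set $A\subseteq\mathbb{D}$ supports an element $z$ of a nominal set precisely when $(d_1\ d_2)\,z=z$ for all $d_1,d_2\notin A$. Apply this with $z=\overrightarrow{R}(S)$, viewed as an element of the ${\rm Perm}(\mathbb{D})$-set $\mathcal{P}(Y)$, and with $A={\rm supp}\,R\cup{\rm supp}\,S$. Note this is the relational counterpart of Lemma \ref{suppor of fs map}, so the shape of the argument is the same.

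First I would record the elementary fact that direct images commute with the action: for any $\pi\in{\rm Perm}(\mathbb{D})$, any $R\in\mathcal{R}(X,Y)$ and any $S\subseteq X$ one has $\pi\cdot\overrightarrow{R}(S)=\overrightarrow{\pi\cdot R}(\pi\cdot S)$. This is a one-line chase: $y\in\pi\cdot\overrightarrow{R}(S)$ iff $\pi^{-1}y\in\overrightarrow{R}(S)$ iff there is $x\in S$ with $(x,\pi^{-1}y)\in R$ iff there is $x\in S$ with $(\pi x,y)\in\pi\cdot R$ iff $y\in\overrightarrow{\pi\cdot R}(\pi\cdot S)$. The analogous identity $\pi\cdot\overleftarrow{R}(S')=\overleftarrow{\pi\cdot R}(\pi\cdot S')$ is proved the same way (or deduced from it via $\overleftarrow{R}(S')=\overrightarrow{R^{-1}}(S')$).

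For (i): fix $d_1,d_2\notin{\rm supp}\,R\cup{\rm supp}\,S$ and set $\pi=(d_1\ d_2)$. Since $d_1,d_2\notin{\rm supp}\,R$, Remark \ref{supp for G}(i) applied to $R$ as an element of the nominal set $\mathcal{R}_{\rm fs}(X,Y)=\mathcal{P}_{\rm fs}(X\times Y)$ (Remark \ref{popol}(iii)) gives $\pi\cdot R=R$; likewise $\pi\cdot S=S$ because $d_1,d_2\notin{\rm supp}\,S$. Substituting into the commutation identity yields $\pi\cdot\overrightarrow{R}(S)=\overrightarrow{R}(S)$. As this holds for every such transposition, $A={\rm supp}\,R\cup{\rm supp}\,S$ is a finite support of $\overrightarrow{R}(S)$; in particular $\overrightarrow{R}(S)$ is finitely supported, and since ${\rm supp}\,\overrightarrow{R}(S)$ is the least finite support (Remark \ref{supp for G}(iii)) we conclude ${\rm supp}\,\overrightarrow{R}(S)\subseteq{\rm supp}\,R\cup{\rm supp}\,S$. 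For (ii) I would either repeat this with the identity for $\overleftarrow{R}(-)$, or observe that $\overleftarrow{R}(S')=\overrightarrow{R^{-1}}(S')$ together with ${\rm supp}\,R^{-1}={\rm supp}\,R$ (immediate from $\pi\cdot R^{-1}=(\pi\cdot R)^{-1}$ and the transposition criterion, consistent with the equivalence (i)$\Leftrightarrow$(ii) of Theorem \ref{rrr}), so that (ii) is just (i) applied to $R^{-1}\in\mathcal{R}_{\rm fs}(Y,X)$.

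There is no real obstacle here; the only point requiring a little care is that ``${\rm supp}$'' on the left-hand sides is meaningful only once $\overrightarrow{R}(S)$ and $\overleftarrow{R}(S')$ are known to be finitely supported, which is exactly what exhibiting the support $A$ provides — so the argument must produce the support set first and only then pass to the least one.
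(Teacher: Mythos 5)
Your proposal is correct and follows essentially the same route as the paper: both arguments show that ${\rm supp}\,R\cup{\rm supp}\,S$ is a finite support of $\overrightarrow{R}(S)$ by checking that any permutation fixing that set (the paper uses arbitrary such permutations, you use the equivalent transposition criterion of Remark~\ref{supp for G}(i)) leaves $\overrightarrow{R}(S)$ invariant, via the same equivariance computation for direct images, and then invoke minimality of the least support; part (ii) is handled symmetrically in both. Your explicit remark that finite supportedness must be established before ${\rm supp}$ is invoked is a sound point of care that the paper leaves implicit.
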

\begin{proof}
 {\rm (i)} Let $\pi \in {\rm Perm}(\mathbb{D})$ with $\pi (d) =d$, for all $d\in{\rm supp}\,R \cup {\rm supp}\,S$. 
 Thus, $\pi \ast R=R$ and $\pi S=S$. Since $\pi \ast R=R$,  by the definition of  $``*"$ in Theorem \ref{pXY is Gset},  we get that $(\pi x, y)\in \pi \ast R=R$ if and only if $(x,  \pi^{-1} y)\in R$. Now, we have:

 \begin{align*}
  y\in \pi ({\overrightarrow R}(S)) & \Longleftrightarrow  \pi^{-1}y\in {\overrightarrow R}(S)\\
   &\Longleftrightarrow   \exists \, x\in S;~\, (x, \pi^{-1} y)\in R\\
   &\Longleftrightarrow  \exists \, x\in S;~\,  (\pi x, y) \in R\\
  & \Longleftrightarrow  (x', y)\in R, \ x'=\pi x\in S \\
   & \Longleftrightarrow   y \in \overrightarrow{R}(S).
  \end{align*}
 
  {\rm (ii)} The proof is similar to part (i).
\end{proof}
 
\begin{corollary}\label{qaz}
Suppose $X,Y$ are nominal sets. If $R\in \mathcal{R}_{{\rm fs}}(X, Y)$ and $(x,y)\in R$, then

{\rm(i)} ${\rm supp}\,\overrightarrow{R}(x)\subseteq {\rm supp}\,R \cup {\rm supp}\,x$.

{\rm(ii)} ${\rm supp}\,\overleftarrow{R}(y)\subseteq {\rm supp}\,R \cup {\rm supp}\,y$. 
\end{corollary}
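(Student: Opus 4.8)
The plan is to deduce this immediately from Proposition \ref{comper} by specializing the subset arguments to singletons. For part (i), I would apply Proposition \ref{comper}(i) with $S=\{x\}$. The hypothesis that $X$ is a nominal set guarantees $x$ is finitely supported, so the singleton $\{x\}$ is a finite, hence finitely supported, subset of $X$; that is, $\{x\}\in\mathcal{P}_{\rm fs}(X)$, which is exactly what Proposition \ref{comper}(i) requires. It then yields ${\rm supp}\,\overrightarrow{R}(\{x\})\subseteq{\rm supp}\,R\cup{\rm supp}\,\{x\}$.

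Two small identifications finish the argument. First, by the notational convention in Definition \ref{def image and domain}(i) one has $\overrightarrow{R}(\{x\})=\overrightarrow{R}(x)$, so the left-hand side is ${\rm supp}\,\overrightarrow{R}(x)$. Second, I claim ${\rm supp}\,\{x\}={\rm supp}\,x$: for any $\pi\in{\rm Perm}(\mathbb{D})$ we have $\pi\cdot\{x\}=\{\pi x\}$, and $\{\pi x\}=\{x\}$ holds if and only if $\pi x=x$, so a finite set supports $\{x\}$ (in the sense of Remark \ref{P(X)-is-act}(i)) precisely when it supports $x$; by Remark \ref{supp for G}(iii) the two least finite supports therefore coincide. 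Substituting both identifications gives ${\rm supp}\,\overrightarrow{R}(x)\subseteq{\rm supp}\,R\cup{\rm supp}\,x$, which is (i). (The standing hypothesis $(x,y)\in R$ is not needed for the inclusion itself — it merely ensures $x\in{\rm Dom}\,R$ so that the statement is not vacuous, since otherwise $\overrightarrow{R}(x)=\emptyset$ has empty support.)

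For part (ii) I would argue symmetrically, applying Proposition \ref{comper}(ii) with $S'=\{y\}\in\mathcal{P}_{\rm fs}(Y)$ and using $\overleftarrow{R}(\{y\})=\overleftarrow{R}(y)$ together with ${\rm supp}\,\{y\}={\rm supp}\,y$ exactly as above; alternatively, one may invoke Theorem \ref{rrr}(ii)/(vi) to note that $R^{-1}\in\mathcal{R}_{\rm fs}(Y,X)$ with ${\rm supp}\,R^{-1}={\rm supp}\,R$ and apply part (i) to $R^{-1}$, since $\overleftarrow{R}(y)=\overrightarrow{R^{-1}}(y)$. There is no real obstacle here: the only point that requires a line of justification is the equality ${\rm supp}\,\{x\}={\rm supp}\,x$, and everything else is a direct citation of Proposition \ref{comper}.
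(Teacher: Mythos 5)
Your proposal is correct and follows exactly the paper's own route: the paper likewise just sets $S=\{x\}$ and $S'=\{y\}$ and applies Proposition \ref{comper}. The extra details you supply (that $\{x\}\in\mathcal{P}_{\rm fs}(X)$, that $\overrightarrow{R}(\{x\})=\overrightarrow{R}(x)$, and that ${\rm supp}\,\{x\}={\rm supp}\,x$) are correct justifications of steps the paper leaves implicit.
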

\begin{proof}
Let $S=\{x\}$ and $S'=\{y\}$. Then,  applying Proposition \ref{comper}, we get the result.
\end{proof}
  
 \begin{corollary}\label{n}
Suppose $X$ and $ Y$ are nominal sets and $R\in \mathcal{R}(X, Y)$ is equivariant.

{\rm(i)} If $X'\in \mathcal{Z}(\mathcal{P}(X))$ and $Y'\in \mathcal{Z}(\mathcal{P}(Y))$, then
 $\overrightarrow{R}(X')$ and $\overleftarrow{R}(Y')$ are, respectively, equivariant subsets of $Y$ and $X$.

{\rm(ii)}  ${\rm supp}\,\overrightarrow{R}(x)\subseteq{\rm supp}\,x $. 

{\rm(iii)}  ${\rm supp}\,\overleftarrow{R}(y)\subseteq {\rm supp}\,y$. 
 \end{corollary}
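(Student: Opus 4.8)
The plan is to deduce all three parts from Proposition \ref{comper} (together with Corollary \ref{qaz}) once we record the single observation that an \emph{equivariant} relation has empty support. Indeed, by the corollary to Theorem \ref{rrr}, a relation $R\in\mathcal{R}(X,Y)$ is equivariant if and only if it is a zero element of $\mathcal{R}(X,Y)$; since moreover $\mathcal{Z}(\mathcal{R}_{\rm fs}(X,Y))=\mathcal{Z}(\mathcal{R}(X,Y))$, this says precisely that $R\in\mathcal{R}_{\rm fs}(X,Y)$ with ${\rm supp}\,R=\emptyset$. Hence in every inequality below the term ${\rm supp}\,R$ on the right-hand side simply vanishes.

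For (ii), I would apply Proposition \ref{comper}(i) with $S=\{x\}\in\mathcal{P}_{\rm fs}(X)$, using ${\rm supp}\,\{x\}={\rm supp}\,x$, to obtain ${\rm supp}\,\overrightarrow{R}(x)={\rm supp}\,\overrightarrow{R}(\{x\})\subseteq{\rm supp}\,R\cup{\rm supp}\,x={\rm supp}\,x$; part (iii) is the mirror image via Proposition \ref{comper}(ii) with $S'=\{y\}$. (One could equivalently invoke Corollary \ref{qaz} when $(x,y)\in R$ and note that $\overrightarrow{R}(x)=\emptyset$, resp.\ $\overleftarrow{R}(y)=\emptyset$, otherwise.) For (i), recall from Remark \ref{P(X)-is-act}(ii) that $X'\in\mathcal{Z}(\mathcal{P}(X))$ means exactly that $X'$ is an equivariant subset of $X$, i.e.\ ${\rm supp}\,X'=\emptyset$, and likewise ${\rm supp}\,Y'=\emptyset$. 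Then Proposition \ref{comper}(i) gives ${\rm supp}\,\overrightarrow{R}(X')\subseteq{\rm supp}\,R\cup{\rm supp}\,X'=\emptyset$, so $\overrightarrow{R}(X')$ is an equivariant subset of $Y$ by Remark \ref{P(X)-is-act}(ii) again, and symmetrically $\overleftarrow{R}(Y')$ is an equivariant subset of $X$ by Proposition \ref{comper}(ii). Alternatively a direct element chase works for (i): given $y\in\overrightarrow{R}(X')$, choose $x\in X'$ with $(x,y)\in R$; equivariance of $R$ yields $(\pi x,\pi y)\in R$, equivariance of $X'$ yields $\pi x\in X'$, and hence $\pi y\in\overrightarrow{R}(X')$ for every $\pi\in{\rm Perm}(\mathbb{D})$.

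There is no genuine obstacle here; the statement merely repackages Proposition \ref{comper} in the special case ${\rm supp}\,R=\emptyset$. The one point requiring a little care is that ``$R$ equivariant'' is being used simultaneously in the sense of $\pi\cdot R=R$ (the Definition) and in the sense of being a zero element / having empty support; these are reconciled by Theorem \ref{rrr} and its corollary, and once that identification is in hand everything is immediate.
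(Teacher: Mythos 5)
Your proposal is correct and follows essentially the same route as the paper: both reduce everything to Proposition \ref{comper} (equivalently Corollary \ref{qaz} for the singleton case) after observing that an equivariant relation has empty support, and both identify $\mathcal{Z}(\mathcal{P}(X))$ with the equivariant subsets via Remark \ref{P(X)-is-act}(ii). Your extra remarks — handling the case $\overrightarrow{R}(x)=\emptyset$ not covered by the hypothesis $(x,y)\in R$ of Corollary \ref{qaz}, and the optional direct element chase for (i) — are harmless refinements of the same argument.
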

 
\begin{proof}
{\rm(i)} Since $X'\in \mathcal{Z}(\mathcal{P}(X))$ and $R\in \mathcal{R}(X, Y)$ is an equivariant relation, we have ${\rm supp}\,X'={\rm supp}\,R=\emptyset$. Now, applying Proposition \ref{comper}(i), we get that ${\rm supp}\,\overrightarrow{R}(X')=\emptyset$. Analogously, one can prove that $\overleftarrow{R}(Y')$ is an equivariant subset of $Y$. 

 \medskip

{\rm(ii)} Since $R$ is equivariant, we have ${\rm supp}\,R=\emptyset$. Now, applying Corollary \ref{qaz}(i), ${\rm supp}\,\overrightarrow{R}(x)\subseteq{\rm supp}\,x $. 

 \medskip

{\rm (iii)} The proof is similar to (ii).
\end{proof}

 \begin{lemma}\label{rrrr}
Given two nominal sets $X,Y$ and injective relation $R\in \mathcal{R}_{\rm fs}(X, Y)$ and $(x,y)\in R$, we can deduce the following:

{\rm (i)} ${\rm supp}\,x\subseteq {\rm supp}\,\overrightarrow{R}(x)\cup {\rm supp}\,R $.

{\rm (ii)} ${\rm supp}\,x\subseteq {\rm supp}\,y\cup {\rm supp}\,R$.

{\rm(iii)} ${\rm supp}\,x= {\rm supp}\,\overleftarrow{R}(y)$.
\end{lemma}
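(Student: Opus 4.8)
The plan is to reduce all three items to the support inequalities already established in Proposition~\ref{comper} and Corollary~\ref{qaz}, exploiting the one structural observation that injectivity of $R$ (Definition~\ref{zzzz}(i)) together with $(x,y)\in R$ forces the inverse image $\overleftarrow{R}(y)$ to collapse to the singleton $\{x\}$. Before starting I would record an auxiliary fact about the powerset construction: for any nominal set $Z$ and $z\in Z$ one has ${\rm supp}\,\{z\}={\rm supp}\,z$. Indeed, by Remark~\ref{P(X)-is-act}(i) a finite $C\subseteq\mathbb{D}$ supports $\{z\}$ in $\mathcal{P}(Z)$ exactly when every $\pi\in({\rm Perm}(\mathbb{D}))_{_C}$ satisfies $\pi z\in\{z\}$, i.e. $\pi z=z$, i.e. $C$ supports $z$ in $Z$; hence the least such sets coincide.

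For (iii), injectivity of $R$ says that $(x',y)\in R$ forces $x'=x$, so $\overleftarrow{R}(y)=\{x\}$, and by the auxiliary fact ${\rm supp}\,\overleftarrow{R}(y)={\rm supp}\,\{x\}={\rm supp}\,x$, which is the claim. Then (ii) is immediate from (iii): Corollary~\ref{qaz}(ii) gives ${\rm supp}\,\overleftarrow{R}(y)\subseteq{\rm supp}\,R\cup{\rm supp}\,y$, and substituting ${\rm supp}\,x$ for ${\rm supp}\,\overleftarrow{R}(y)$ yields ${\rm supp}\,x\subseteq{\rm supp}\,y\cup{\rm supp}\,R$.

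For (i), first note that $\overrightarrow{R}(x)$ is nonempty, since it contains $y$, and is finitely supported by Corollary~\ref{qaz}(i), so it is a legitimate argument for Proposition~\ref{comper}(ii). Using injectivity of $R$ once more, $\overleftarrow{R}(\overrightarrow{R}(x))=\{x'\in X:\exists\,y'\ (x,y')\in R\text{ and }(x',y')\in R\}=\{x\}$. Applying Proposition~\ref{comper}(ii) with $S'=\overrightarrow{R}(x)$ then gives ${\rm supp}\,\overleftarrow{R}(\overrightarrow{R}(x))\subseteq{\rm supp}\,R\cup{\rm supp}\,\overrightarrow{R}(x)$, and rewriting the left-hand side as ${\rm supp}\,\{x\}={\rm supp}\,x$ finishes the proof.

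The argument is essentially bookkeeping; the only points that require a moment's care are the singleton-support identity and verifying that every set fed to Proposition~\ref{comper} actually lies in $\mathcal{P}_{\rm fs}$, so I do not expect a serious obstacle. If one preferred a self-contained proof of (i) and (ii) avoiding Proposition~\ref{comper}, the alternative is a fresh-name argument: given $d\in{\rm supp}\,x$, assume $d\notin{\rm supp}\,R\cup{\rm supp}\,\overrightarrow{R}(x)$ (for (i)) or $d\notin{\rm supp}\,R\cup{\rm supp}\,y$ (for (ii)), choose $d'$ outside all relevant supports, and push the transposition $\pi=(d\ d')$ through $(x,y)\in R=\pi\cdot R$; invariance of $\overrightarrow{R}(x)$ respectively of $y$ under $\pi$ combined with injectivity forces $\pi x=x$, whence $d\notin{\rm supp}\,x$ by Remark~\ref{supp is equi.}, a contradiction. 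This is longer, so I expect the support-inequality route above to be the one worth writing down.
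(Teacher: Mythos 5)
Your proposal is correct, and for parts (i) and (ii) it takes a genuinely different route from the paper. The paper proves (i) and (ii) by a direct transposition argument: it picks $d,d'$ outside the claimed supporting set, uses Remark~\ref{popol}(i) to get $(d\ d')\,\overrightarrow{R}(x)=\overrightarrow{R}((d\ d')x)$, and then invokes injectivity (via Lemma~\ref{lem for injec}(i) for (i), and directly from $(x,y),((d\ d')x,y)\in R$ for (ii)) to conclude $(d\ d')x=x$ and finish with Remark~\ref{supp for G}(i). You instead observe that injectivity collapses inverse images to singletons --- $\overleftarrow{R}(y)=\{x\}$ and $\overleftarrow{R}(\overrightarrow{R}(x))=\{x\}$ --- and then simply cite the inverse-image support estimates already proved (Corollary~\ref{qaz}(ii) for (ii), Proposition~\ref{comper}(ii) applied to $S'=\overrightarrow{R}(x)$ for (i)) together with the identity ${\rm supp}\,\{x\}={\rm supp}\,x$, whose justification via Remark~\ref{P(X)-is-act}(i) is sound; your check that $\overrightarrow{R}(x)$ lies in $\mathcal{P}_{\rm fs}(Y)$ via Corollary~\ref{qaz}(i) is the right point to verify. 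Your (iii) coincides with the paper's. What your route buys is brevity and an explicit reduction to results already in hand; what the paper's route buys is a self-contained swapping-style argument that does not depend on Proposition~\ref{comper} (and indeed your closing sketch of the fresh-name alternative is essentially the paper's proof).
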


\begin{proof}
{\rm (i)} Let $d,d'\notin  {\rm supp}\,\overrightarrow{R}(x)\cup {\rm supp}\,R$. Then $(d\ d')\overrightarrow{R}(x)=\overrightarrow{R}(x)$ and $R=(d\ d')\cdot R$. By Remark \ref{popol}(i), we have $\overrightarrow{R}(x)=\overrightarrow{((d\ d')\cdot R)}(x)=(d\ d')(\overrightarrow{R} (d\ d')x)$. Thus, $(d\ d') \overrightarrow{R}(x)=\overrightarrow{R} (d\ d')x$ and so $\overrightarrow{R}(x)=\overrightarrow{R} (d\ d')x$. Now, since $R$ is injective, we get that $(d\ d')x=x$. So, by Remark \ref{supp for G} (i), ${\rm supp}\,\overrightarrow{R}(x)\cup {\rm supp}\,R$ supports $x$.

 \medskip

{\rm (ii)} Let $d,d'\notin  {\rm supp}\,y \cup {\rm supp}\,R$. Then, $(d\ d') y= y$ and $R=(d\ d')\cdot R$. By Remark  \ref{popol}(i), we have $(d\ d')\overrightarrow{R}(x)=\overrightarrow{R} (d\ d')x$. Thus, $y\in \overrightarrow{R}(x)$ implies that $y=(d\ d')y\in (d\ d')\overrightarrow{R}(x)=\overrightarrow{R} (d\ d')x$. So, $(x,y), ((d\ d')x, y)\in R$.
Now, since $R$ is injective, we get that $(d\ d')x=x$. So, by Remark \ref{supp for G}(i), ${\rm supp}\,y\cup {\rm supp}\,R$ supports $x$.

 \medskip

{\rm(iii)} Since $R$ is injective and $x\in \overleftarrow{R}(y)$, we get that $\overleftarrow{R}(y)=\{x\}$. Thus, ${\rm supp}\,x={\rm supp}\,\overleftarrow{R}(y)$. 
\end{proof}

\begin{corollary}\label{yhm}
Let $X,Y$ be two nominal sets,  $R\in \mathcal{R}(X, Y)$ be an equivariant injective relation, and $(x,y)\in R$. Then,
 
 {\rm (i)} ${\rm supp}\,x= {\rm supp}\,\overrightarrow{R}(x)$.
 
 {\rm (ii)}  ${\rm supp}\,x\subseteq {\rm supp}\,y$.
\end{corollary}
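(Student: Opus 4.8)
\textbf{Proof proposal for Corollary \ref{yhm}.}

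The plan is to derive both statements directly from Lemma \ref{rrrr} by exploiting the fact that an equivariant relation has empty support. First I would invoke Corollary \ref{rrr}-type reasoning (or simply Remark \ref{popol}(i) together with the Corollary following Theorem \ref{rrr}): since $R$ is equivariant, $\pi \cdot R = R$ for every $\pi \in {\rm Perm}(\mathbb{D})$, so $\emptyset$ is a finite support for $R$ and hence ${\rm supp}\,R = \emptyset$. This is the only extra observation needed; everything else is a specialization of results already proved.

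For part (i), I would apply Lemma \ref{rrrr}(i) with $(x,y) \in R$ to get ${\rm supp}\,x \subseteq {\rm supp}\,\overrightarrow{R}(x) \cup {\rm supp}\,R = {\rm supp}\,\overrightarrow{R}(x)$, using ${\rm supp}\,R = \emptyset$. For the reverse inclusion I would quote Corollary \ref{n}(ii), which gives ${\rm supp}\,\overrightarrow{R}(x) \subseteq {\rm supp}\,x$ precisely because $R$ is equivariant. Combining the two inclusions yields the claimed equality ${\rm supp}\,x = {\rm supp}\,\overrightarrow{R}(x)$.

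For part (ii), I would apply Lemma \ref{rrrr}(ii) to the pair $(x,y) \in R$, obtaining ${\rm supp}\,x \subseteq {\rm supp}\,y \cup {\rm supp}\,R$, and then simplify using ${\rm supp}\,R = \emptyset$ to conclude ${\rm supp}\,x \subseteq {\rm supp}\,y$.

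I do not anticipate a genuine obstacle here: the corollary is essentially the "$R$ equivariant" instance of Lemma \ref{rrrr}, and the only mild subtlety is remembering that for part (i) one direction comes from Lemma \ref{rrrr}(i) while the opposite direction comes from Corollary \ref{n}(ii), rather than both from a single statement. One should also make sure that $\overrightarrow{R}(x)$ and $\overleftarrow{R}(y)$ are actually finitely supported so that their supports are defined; this is guaranteed by Remark \ref{popol}(iii) (so $R \in \mathcal{R}_{\rm fs}(X,Y)$) together with Proposition \ref{comper}, which also shows these direct/inverse images lie in $\mathcal{P}_{\rm fs}$.
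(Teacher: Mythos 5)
Your proposal is correct and follows exactly the paper's own argument: note that ${\rm supp}\,R=\emptyset$ because $R$ is equivariant, then combine Lemma \ref{rrrr}(i) with Corollary \ref{n}(ii) for part (i), and apply Lemma \ref{rrrr}(ii) for part (ii). The extra remark about why $\overrightarrow{R}(x)$ is finitely supported is a reasonable addition but not something the paper spells out.
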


\begin{proof}
Notice that, since $R$ is an equivariant relation, ${\rm supp}\,R=\emptyset$. So,

{\rm(i)} applying Lemma \ref{rrrr}(i), we have $ {\rm supp}\,x\subseteq {\rm supp}\,\overrightarrow{R}(x)$ and Corollary \ref{n}(ii) implies that $ {\rm supp}\,\overrightarrow{R}(x)\subseteq  {\rm supp}\,x$. Thus ${\rm supp}\,\overrightarrow{R}(x)={\rm supp}\,x$.

 \medskip

{\rm(ii)} the result follows by Lemma \ref{rrrr}(ii).
\end{proof}

\begin{corollary}
Let $R\in \mathcal{R}(X, \mathbb{D})$ 
be an equivariant injective relation and $(x, d)\in R$. Then, ${\rm supp}\,x\subseteq \{d\}$ and so $x\in \mathcal{Z}(X)$ or ${\rm supp}\,x=\{d\}$.
\end{corollary}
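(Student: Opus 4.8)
The plan is to obtain this as an immediate consequence of Corollary \ref{yhm}(ii) together with the explicit description of supports in the nominal set $\mathbb{D}$. First I would note that since $R\in\mathcal{R}(X,\mathbb{D})$ is equivariant we have ${\rm supp}\,R=\emptyset$, and since it is also injective the hypotheses of Corollary \ref{yhm} are satisfied for the pair $(x,d)\in R$. Applying Corollary \ref{yhm}(ii) with $Y=\mathbb{D}$ then yields ${\rm supp}\,x\subseteq {\rm supp}\,d$, where the right-hand support is computed in $\mathbb{D}$.

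Next I would invoke Example \ref{example nom}(i), which records that in the nominal set $\mathbb{D}$ the least finite support of an atom $d$ is the singleton $\{d\}$, i.e. ${\rm supp}_{_{\mathbb{D}}}\,d=\{d\}$. Substituting this into the inclusion from the previous step gives ${\rm supp}\,x\subseteq\{d\}$, which is the first assertion of the corollary.

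For the stated dichotomy I would simply observe that the only subsets of the singleton $\{d\}$ are $\emptyset$ and $\{d\}$. In the first case ${\rm supp}\,x=\emptyset$, which by Remark \ref{P(X)-is-act}(ii) (equivalently, directly from the definition of a zero element together with Remark \ref{supp for G}(i)) forces $x\in\mathcal{Z}(X)$; in the second case ${\rm supp}\,x=\{d\}$. Since the proof merely chains three results already established in the excerpt, I do not expect a genuine obstacle; the only point deserving a moment's attention is ensuring that the support of $d$ is taken in $\mathbb{D}$ and not in some larger ambient $\mathrm{Perm}(\mathbb{D})$-set, which is precisely what Example \ref{example nom}(i) guarantees.
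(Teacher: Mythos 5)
Your proposal is correct and matches the paper's intent exactly: the corollary is stated in the paper without proof precisely because it is the immediate specialization of Corollary \ref{yhm}(ii) to $Y=\mathbb{D}$, using ${\rm supp}_{_{\mathbb{D}}}\,d=\{d\}$ and the observation that a subset of a singleton is either empty (forcing $x\in\mathcal{Z}(X)$) or the singleton itself. Your only slight imprecision is citing Remark \ref{P(X)-is-act}(ii) for the emptiness-of-support step, but your parenthetical fallback to the definition of a zero element via Remark \ref{supp for G}(i) is the right justification, so nothing is missing.
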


 The following example shows that the converse of Corollary \ref{yhm}(ii) is not true in general.

\begin{example}
Suppose $R\in \mathcal{R}(\mathbb{D}, \mathbb{D}^{(2)})$ is a relation defined by $$R=\{(d,(x,y)): d=x\, \vee \, d=y,\ x\neq y\}.$$ For all $(d, (x,y))\in R$, we have $\{d\}={\rm supp}\,d\subseteq \{x,y\}={\rm supp}\,(x,y)$, but $R$ is not injective, since $(d,(d,d')), (d',(d,d'))\in R$.  
\end{example}

\begin{proposition}
Let $X$ be a nominal set. Then,

{\rm(i)} the relation $\leqslant$, given in Example \ref{suprel}(ii), is a reflexive and transitive relation on $X$.

{\rm(ii)} if $R$ is a non-empty injective equivariant relation on $X$, then $R\subseteq \leqslant$.

{\rm(iii)} the set $\mathcal{R}^{e}_{_{inj}}$ of all injective equivariant relations on $X$ is a nominal set and $\leqslant$ is an upper bound for $\mathcal{R}^{e}_{_{inj}}$.
\end{proposition}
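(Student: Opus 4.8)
The plan is to treat the three parts separately, each reducing to results already established above. Part~(i) is immediate: reflexivity holds because ${\rm supp}\,x\subseteq{\rm supp}\,x$ for every $x\in X$, so $(x,x)\in \leqslant$; and transitivity is simply transitivity of set inclusion, since $(x,x')\in \leqslant$ and $(x',x'')\in \leqslant$ give ${\rm supp}\,x\subseteq{\rm supp}\,x'\subseteq{\rm supp}\,x''$, whence $(x,x'')\in \leqslant$. There is no obstacle here.

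For part~(ii) I would invoke Corollary~\ref{yhm}(ii) directly. Let $R$ be a non-empty injective equivariant relation on $X$ and take any $(x,y)\in R$; applying that corollary with $Y=X$ yields ${\rm supp}\,x\subseteq{\rm supp}\,y$, i.e.\ $(x,y)\in \leqslant$. Hence $R\subseteq \leqslant$. (The hypothesis that $R$ be non-empty is not actually needed, the inclusion being vacuous when $R=\emptyset$, but it causes no harm.)

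For part~(iii), the key observation is the corollary following Theorem~\ref{rrr}: the equivariant relations on $X$ are precisely the zero elements of $\mathcal{R}(X,X)=\mathcal{P}(X\times X)$ for the action ``$\cdot$'' (equivalently ``$*$'', by Theorem~\ref{in}). Consequently every member of $\mathcal{R}^{e}_{_{inj}}$ is a fixed point of the ${\rm Perm}(\mathbb{D})$-action, so $\mathcal{R}^{e}_{_{inj}}$ is an equivariant subset of $\mathcal{P}(X\times X)$ all of whose elements are zero, i.e.\ a discrete ${\rm Perm}(\mathbb{D})$-set; by Example~\ref{example nom}(ii) it is therefore a nominal set, each of its elements being supported by $\emptyset$. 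It remains to see that $\leqslant$ is an upper bound: by Example~\ref{suprel}(ii), $\leqslant$ is itself an equivariant relation on $X$, and combining part~(ii) with the trivial inclusion $\emptyset\subseteq \leqslant$ gives $R\subseteq \leqslant$ for every $R\in\mathcal{R}^{e}_{_{inj}}$; thus $\leqslant$ dominates every element of $\mathcal{R}^{e}_{_{inj}}$ in the poset of equivariant relations on $X$ ordered by inclusion.

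The only step calling for any care --- and the ``main obstacle'', modest as it is --- is part~(iii): one must correctly identify the ${\rm Perm}(\mathbb{D})$-action that $\mathcal{R}^{e}_{_{inj}}$ inherits and observe that equivariance forces each such relation to be a fixed point, so that the nominal-set assertion collapses to the triviality that a discrete ${\rm Perm}(\mathbb{D})$-set is nominal; no genuine support computation is involved. I would also make explicit that ``upper bound'' is meant with respect to inclusion of (equivariant) relations, and that $\leqslant$ is typically not injective, so that it is an upper bound rather than a greatest element of $\mathcal{R}^{e}_{_{inj}}$.
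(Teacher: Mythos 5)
Your proposal is correct, and parts (i) and (ii) coincide with the paper's own argument: (i) is dismissed there as clear, and (ii) is exactly the application of Corollary \ref{yhm}(ii) to an arbitrary pair $(a,b)\in R$. The only real divergence is in part (iii). The paper proves that $\mathcal{R}^{e}_{_{inj}}$ is a nominal set by checking directly that the $*$-action preserves injectivity: given $(x,y),(x',y)\in \pi*R$ it pulls back along $\pi^{-1}$, uses injectivity of $R$, and concludes $x=x'$, so that $\mathcal{R}^{e}_{_{inj}}$ is an equivariant subset of the ${\rm Perm}(\mathbb{D})$-set $\mathcal{R}(X,X)$. You instead observe that every member of $\mathcal{R}^{e}_{_{inj}}$ is equivariant, hence a zero element of $\mathcal{P}(X\times X)$ by Theorem \ref{rrr}(v)--(vi), so the induced action on $\mathcal{R}^{e}_{_{inj}}$ is trivial and the set is a discrete, hence nominal, ${\rm Perm}(\mathbb{D})$-set by Example \ref{example nom}(ii). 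Your route is a mild shortcut that makes the paper's injectivity check redundant (since $\pi*R=R$ for equivariant $R$, preservation of injectivity is automatic), and it has the additional merit of making explicit that each element has empty support, a point the paper leaves implicit; the paper's computation, on the other hand, would survive even if one enlarged $\mathcal{R}^{e}_{_{inj}}$ to finitely supported injective relations. Your closing remarks on the meaning of ``upper bound'' with respect to inclusion and on $\leqslant$ not itself being injective are accurate and harmless additions.
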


\begin{proof}
{\rm(i)} That is clear.

 \medskip
 
{\rm(ii)} Let $(a,b)\in R$. Since $R$ is injective, applying Corollary \ref{yhm}(ii), ${\rm supp}\,a\subseteq {\rm supp}\,b$. So, $R\subseteq \leqslant$.

 \medskip
 
{\rm(iii)} Let $\pi \in {\rm Perm}(\mathbb{D})$ and $R\in \mathcal{R}^{e}_{_{inj}}$ with $(x, y), (x', y)\in \pi* R$. Since $R$ is equivariant, $(\pi^{-1} x, \pi^{-1} y), (\pi^{-1} x', \pi^{-1} y)\in R$. Now, since $R$ is injective, $\pi^{-1} x=\pi^{-1} x'$ and so $x=x'$.  Also, $\leqslant$ is an upper bound of $\mathcal{R}^{e}_{_{inj}}$ by (ii).
\end{proof}

\begin{corollary}
{\rm(i)} The only equivariant injective relation on $\mathbb{D}$ is $\Delta _{\mathbb{D}}$. To examine, let $R$ be an injective equivariant relation on $\mathbb{D}$ and $(d, d')\in R$. Then, ${\rm supp}\,d\subseteq {\rm supp}\,d'$ and so $d=d'$.

{\rm(ii)} Let the relation $R \in \mathcal{R}(\mathbb{D}, \mathbb{D}^{(2)})$  be equivariant and injective, and $(x,y)\in R$. Then,  there exist $a, b\in \mathbb{D}$ where $a\neq b$ such that either $x=a$ or $x=b$ and  $y=(a,b)$, but both $(a, (a,b))$ and $(b, (a,b))$ can not belong to $R$.

\end{corollary}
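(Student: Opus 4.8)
The plan is to derive both parts directly from Corollary \ref{yhm}(ii), which says that for an equivariant injective relation $R$ and any $(x,y)\in R$ one has ${\rm supp}\,x\subseteq {\rm supp}\,y$. The only additional ingredients are the explicit least supports in the two nominal sets involved: ${\rm supp}\,d=\{d\}$ for $d\in\mathbb{D}$ (Example \ref{example nom}(i)), and ${\rm supp}\,(a,b)=\{a,b\}$ for $(a,b)\in\mathbb{D}^{(2)}$ (the elements of $\mathbb{D}^{(2)}$ being pairs of distinct atoms). So the proof is essentially a matter of substituting these supports into the abstract inclusion.

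For part (i): first I would fix an arbitrary equivariant injective relation $R$ on $\mathbb{D}$ and a pair $(d,d')\in R$; Corollary \ref{yhm}(ii) gives $\{d\}={\rm supp}\,d\subseteq {\rm supp}\,d'=\{d'\}$, hence $d=d'$, i.e. $R\subseteq \Delta_\mathbb{D}$. To upgrade this inclusion to the stated characterization, I would note that $\Delta_\mathbb{D}$ is itself evidently equivariant and injective, and that if $R$ is non-empty, say $(d,d)\in R$, then equivariance forces $(\pi d,\pi d)\in R$ for every $\pi\in{\rm Perm}(\mathbb{D})$; since ${\rm Perm}(\mathbb{D})$ acts transitively on $\mathbb{D}$ (any transposition $(d\ e)$ sends $d$ to $e$), this yields $\Delta_\mathbb{D}\subseteq R$, whence $R=\Delta_\mathbb{D}$. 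Thus the empty relation aside, $\Delta_\mathbb{D}$ is the only equivariant injective relation on $\mathbb{D}$.

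For part (ii): given $R\in\mathcal{R}(\mathbb{D},\mathbb{D}^{(2)})$ equivariant and injective and $(x,y)\in R$, I would write $y=(a,b)$ with $a\neq b$ (this is just membership in $\mathbb{D}^{(2)}$) and observe that $x$ is an atom. Corollary \ref{yhm}(ii) gives $\{x\}={\rm supp}\,x\subseteq {\rm supp}\,(a,b)=\{a,b\}$, so $x=a$ or $x=b$. The exclusivity claim is then immediate from injectivity: if both $(a,(a,b))$ and $(b,(a,b))$ belonged to $R$, two distinct first coordinates $a\neq b$ would be related to the single element $(a,b)$, contradicting Definition \ref{zzzz}(i).

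I do not expect a genuine obstacle here; the statement is a routine consequence of Corollary \ref{yhm}. The only points needing a little care are spelling out the precise sense in which $\Delta_\mathbb{D}$ is ``the only'' such relation on $\mathbb{D}$ (the empty relation being the trivial exception), and being explicit about the supports of atoms and of pairs of distinct atoms, since those are exactly the facts that convert the abstract support inclusion of Corollary \ref{yhm}(ii) into the stated conclusions.
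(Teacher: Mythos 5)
Your proposal is correct and follows essentially the same route as the paper: both parts are read off from Corollary \ref{yhm}(ii) by substituting ${\rm supp}\,d=\{d\}$ and ${\rm supp}\,(a,b)=\{a,b\}$, with injectivity giving the exclusivity in (ii). You are in fact slightly more careful than the paper, which only establishes $R\subseteq\Delta_{\mathbb{D}}$ and does not address the reverse inclusion or the empty relation; your transitivity argument for $\Delta_{\mathbb{D}}\subseteq R$ when $R\neq\emptyset$ is a worthwhile addition.
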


\begin{proposition}\label{faroh}
Suppose $X$ is a nominal set and $ R\in \mathcal{R}(X, X)$ is an equivariant injective relation and $(x,y)\in R$. If $R$ is a symmetric relation,  then

{\rm (i)}  ${\rm supp}\,x={\rm supp}\,y$.

{\rm (ii)} ${\rm supp}\,\overrightarrow{R}(x)={\rm supp}\,y$.
\end{proposition}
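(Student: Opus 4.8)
The plan is to obtain both parts directly from Corollary \ref{yhm}, exploiting symmetry to run the inclusion argument in both directions.

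For part (i), I would first apply Corollary \ref{yhm}(ii) to the pair $(x,y)\in R$: this is legitimate because $R$ is an equivariant injective relation, and it yields ${\rm supp}\,x\subseteq {\rm supp}\,y$. Then I would use the hypothesis that $R$ is symmetric, i.e. $R^{-1}=R$: this gives $(y,x)\in R$ as well, and $R$ is of course still equivariant and injective, so a second application of Corollary \ref{yhm}(ii), now to the pair $(y,x)$, produces the reverse inclusion ${\rm supp}\,y\subseteq {\rm supp}\,x$. Combining the two inclusions gives ${\rm supp}\,x={\rm supp}\,y$.

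For part (ii), I would invoke Corollary \ref{yhm}(i) for the pair $(x,y)\in R$ (again no use of symmetry is needed here), which gives ${\rm supp}\,\overrightarrow{R}(x)={\rm supp}\,x$. Chaining this with the equality established in part (i) yields ${\rm supp}\,\overrightarrow{R}(x)={\rm supp}\,x={\rm supp}\,y$, which is the claim.

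There is no real obstacle in this argument; the one point worth emphasising is that symmetry is exactly the ingredient that upgrades the one-sided inclusion ${\rm supp}\,x\subseteq{\rm supp}\,y$ of Corollary \ref{yhm}(ii) to an equality, since it is what allows Corollary \ref{yhm}(ii) to be applied with the roles of $x$ and $y$ interchanged. Without it the inclusion can be strict, so symmetry cannot be dropped.
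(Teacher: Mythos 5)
Your proof is correct and follows essentially the same route as the paper: part (i) applies Corollary \ref{yhm}(ii) to both $(x,y)$ and, via symmetry, $(y,x)$ to get the two inclusions, and part (ii) chains Corollary \ref{yhm}(i) with part (i). No differences worth noting.
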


\begin{proof}
{\rm (i)} Suppose $(x,y)\in R$.  Since $R$ is symmetric, $(y,x)\in R$. By Corollary \ref{yhm}(ii) and the assumption of injectivity of $R$, we get that ${\rm supp}\,x={\rm supp}\,y$.

 \medskip

{\rm (ii)} The proof follows from part (i) and Corollary \ref{yhm}(i).
\end{proof}

According to \cite[Lemma 2.12 (iii)]{Pitts2}, if $f:X\rightarrow Y$ is a surjective equivariant map, in which $X$ is a nominal set and $Y$ is a ${\rm Perm}(\mathbb{D})$-set, then $Y$ is a nominal set. The example that follows demonstrates that this is untrue when $f$ is an equivariant relation.

\sloppypar\noindent\begin{example}
Let $R: \mathbb{D}\longrightarrow [\mathcal{P}(\mathbb{D})\setminus \emptyset]$ be a relation defined by $R=\{(d, A): d\in A\}$. Then, $R$ is surjective and equivariant. Notice that, by Remark \ref{P(X)-is-act}(iii), $\mathcal{P}(\mathbb{D})$ is not a nominal set.
\end{example}

\begin{proposition}\label{R_* and R^*}
Let $X$ and $Y$ be two nominal sets and $R\in \mathcal{R}(X, Y)$ be equivariant. Then, 

{\rm(i)} the map $\overrightarrow{R}:\mathcal{P}_{{\rm fs}}(X)\rightarrow \mathcal{P}_{{\rm fs}}(Y)$ mapping each $A\in\mathcal{P}_{{\rm fs}}(X)$ to $\overrightarrow {R}(A)$ is  equivariant.

{\rm(ii)} the map $\overleftarrow{R}:\mathcal{P}_{{\rm fs}}(Y)\rightarrow \mathcal{P}_{{\rm fs}}(X)$ defined by $\overleftarrow{R}(B)=\{x\in{X}: \exists y\in {B; (x,y)\in{R}}\}$, for each $B\in \mathcal{P}_{{\rm fs}}(Y)$, is  equivariant.
\end{proposition}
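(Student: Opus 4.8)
The plan is to check, separately for $\overrightarrow{R}$ and $\overleftarrow{R}$, two things: that the assignment actually lands in the stated codomain (i.e.\ takes finitely supported subsets to finitely supported subsets), and that it commutes with the ${\rm Perm}(\mathbb{D})$-actions on the power-set nominal sets. For the first, I would use that $R$ being equivariant forces ${\rm supp}\,R=\emptyset$ (by the corollary following Theorem \ref{rrr}), so Proposition \ref{comper}(i) gives, for every $A\in\mathcal{P}_{{\rm fs}}(X)$, the bound ${\rm supp}\,\overrightarrow{R}(A)\subseteq{\rm supp}\,R\cup{\rm supp}\,A={\rm supp}\,A$, which is finite; hence $\overrightarrow{R}(A)\in\mathcal{P}_{{\rm fs}}(Y)$ and the map is well defined. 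Dually, Proposition \ref{comper}(ii) shows $\overleftarrow{R}(B)\in\mathcal{P}_{{\rm fs}}(X)$ for $B\in\mathcal{P}_{{\rm fs}}(Y)$.

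For equivariance of $\overrightarrow{R}$, I would verify $\overrightarrow{R}(\pi\cdot A)=\pi\cdot\overrightarrow{R}(A)$ for all $\pi\in{\rm Perm}(\mathbb{D})$ by a direct chain of equivalences on elements: $y\in\pi\cdot\overrightarrow{R}(A)$ iff $\pi^{-1}y\in\overrightarrow{R}(A)$ iff there is $x\in A$ with $(x,\pi^{-1}y)\in R$; since $R$ is equivariant (Theorem \ref{rrr}(i)) this is equivalent to $(\pi x,y)\in R$ with $\pi x\in\pi\cdot A$, i.e.\ $y\in\overrightarrow{R}(\pi\cdot A)$. This is exactly the computation already carried out in the proof of Proposition \ref{comper}(i), now run with an arbitrary permutation rather than one fixing the support. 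For $\overleftarrow{R}$ I would either repeat the argument using the equivariance of $R^{-1}$ (Theorem \ref{rrr}(ii)), or simply note that $\overleftarrow{R}=\overrightarrow{R^{-1}}$ and apply part (i) to the equivariant relation $R^{-1}$.

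I do not expect a genuine obstacle here: the only point that requires work beyond unwinding the definition of the action on $\mathcal{P}(X)$ from Remark \ref{P(X)-is-act}(i) is guaranteeing that direct and inverse images of finitely supported sets remain finitely supported, and that is precisely the content of Proposition \ref{comper} combined with ${\rm supp}\,R=\emptyset$. Once those support bounds are in hand, both well-definedness and equivariance are immediate.
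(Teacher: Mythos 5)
Your proposal is correct and its core equivariance computation ($y\in\pi\cdot\overrightarrow{R}(A)$ iff $\pi^{-1}y\in\overrightarrow{R}(A)$ iff $\exists x\in A$ with $(\pi x,y)\in R$, etc.) is essentially the same element-chasing argument the paper gives, with part (ii) handled symmetrically. The only difference is that you additionally justify that the map lands in $\mathcal{P}_{\rm fs}$ via Proposition \ref{comper} and ${\rm supp}\,R=\emptyset$, a point the paper leaves implicit; this is a welcome addition rather than a different approach.
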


\begin{proof}
{\rm(i)} Let $\pi \in {\rm Perm}(\mathbb{D})$. Then, we show that $\pi  \overrightarrow{R}(A)=\overrightarrow{R}(\pi A)$.
To prove, let $\pi y\in \pi \overrightarrow{R} (A)$. Then, there exists $x'\in A$ with $(x',y)\in R$. Since $R$ is equivariant, $(\pi x', \pi y)\in R$. Thus, $\pi x'\in \pi A$, and so, $\pi y\in  \overrightarrow{R}(\pi A)$. Similarly, one can see that $\overrightarrow{R}(\pi A)\subseteq \pi \overrightarrow{R}(A)$.

 \medskip

{\rm(ii)} The proof is similar to part (i).
\end{proof}

\begin{lemma}\label{indecom is surj}
Let $X$ and $Y$ be two $G$-sets and $R \in \mathcal{ R}(X, Y )$ be equivariant. Then,

{\rm(i)} if $R$ is a partial surjective map and $X$ indecomposable, then $\overrightarrow{R}(X)$ is indecomposable.

{\rm(ii)} if $R$ is non-empty and $Y$ is indecomposable, then $R$ is surjective.
\end{lemma}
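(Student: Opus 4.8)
The plan is to prove both parts by transporting (in)decomposability along $R$ using only equivariance of $R$ together with the elementary image/preimage identities; part (ii) is immediate, while part (i) is a short argument by contradiction.

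For (ii), I would first observe that $\overrightarrow{R}(X)={\rm Im}\,R$ is an equivariant subset of $Y$: this is the one-line check that $(x,y)\in R$ implies $(gx,gy)\in R$, and it is also the special case $X'=X$ of Corollary \ref{n}(i). Since $R\neq\emptyset$ we have ${\rm Im}\,R\neq\emptyset$, and since $Y$ is indecomposable while $Y={\rm Im}\,R\cup(Y\setminus{\rm Im}\,R)$ exhibits $Y$ as a disjoint union of two equivariant subsets, we must have $Y\setminus{\rm Im}\,R=\emptyset$. That is, ${\rm Im}\,R=Y$, which is exactly surjectivity of $R$ in the sense of Definition \ref{zzzz}(ii).

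For (i), I would first dispose of the degenerate case $R=\emptyset$ (then $\overrightarrow{R}(X)=\emptyset$ is vacuously indecomposable), and then record two facts. First, ${\rm Dom}\,R$ is an equivariant subset of $X$ (same one-line check), it is non-empty since $R\neq\emptyset$, and hence ${\rm Dom}\,R=X$ because $X$ is indecomposable. Second, suppose toward a contradiction that $\overrightarrow{R}(X)=Y_{1}\cup Y_{2}$ with $Y_{1},Y_{2}$ non-empty, disjoint equivariant subsets. Setting $A_{i}=\overleftarrow{R}(Y_{i})$, each $A_{i}$ is an equivariant subset of $X$ (by Theorem \ref{rrr}(iii), or Corollary \ref{n}(i)), each $A_{i}$ is non-empty because $\emptyset\neq Y_{i}\subseteq{\rm Im}\,R$, and $A_{1}\cup A_{2}=\overleftarrow{R}(Y_{1}\cup Y_{2})=\overleftarrow{R}({\rm Im}\,R)={\rm Dom}\,R=X$. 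The crucial point is that, since $R$ is a partial surjective map, Remark \ref{prop of BB} yields $A_{1}\cap A_{2}=\overleftarrow{R}(Y_{1}\cap Y_{2})=\overleftarrow{R}(\emptyset)=\emptyset$. Hence $X=A_{1}\cup A_{2}$ is a decomposition of $X$ into two non-empty disjoint equivariant subsets, contradicting indecomposability of $X$; therefore $\overrightarrow{R}(X)$ is indecomposable.

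The only delicate point — and the place where the hypotheses are genuinely used — is the disjointness $A_{1}\cap A_{2}=\emptyset$ in part (i): it relies precisely on $R$ being a partial surjective map (Remark \ref{prop of BB}), without which the pulled-back cover of $X$ need not be disjoint and the contradiction would collapse. Everything else is bookkeeping with equivariant subsets and the identities $\overleftarrow{R}(A\cup B)=\overleftarrow{R}(A)\cup\overleftarrow{R}(B)$ and $\overleftarrow{R}({\rm Im}\,R)={\rm Dom}\,R$.
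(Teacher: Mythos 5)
Your proof is correct. For part (i) you take essentially the same route as the paper: assume a decomposition $\overrightarrow{R}(X)=Y_1\cup Y_2$, pull it back along $\overleftarrow{R}$ to get non-empty equivariant subsets of $X$, and use Remark \ref{prop of BB} (partial surjectivity) to see that the pulled-back sets are disjoint, contradicting indecomposability of $X$; the only cosmetic difference is that the paper concludes via Lemma \ref{cyclic nom} that each preimage must equal all of $X$, whereas you assemble them into an explicit decomposition $X=A_1\cup A_2$ (after noting ${\rm Dom}\,R=X$), which is the same contradiction dressed slightly differently. For part (ii), however, your argument genuinely diverges from the paper's: the paper invokes Lemma \ref{cyclic nom} to write $Y=Gy$ and then, given $(x,z)\in R$ and an arbitrary $t=g_1y$, $z=g_2y$, manufactures an explicit preimage $g_1g_2^{-1}x$ of $t$ using equivariance; you instead observe that ${\rm Im}\,R$ is a non-empty equivariant subset of $Y$ whose complement is also equivariant, so indecomposability forces $Y\setminus{\rm Im}\,R=\emptyset$. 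Your version is shorter, avoids the cyclicity detour entirely, and makes it transparent that only the definition of indecomposability is needed; the paper's version has the mild virtue of exhibiting a concrete witness for each $t\in Y$. Your explicit handling of the degenerate case $R=\emptyset$ in (i) and of why ${\rm Dom}\,R=X$ are details the paper leaves implicit, and they are welcome.
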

\begin{proof}
{\rm(i)} On the contrary, suppose there exist disjoint equivariant subsets $A$ and $B$ of $Y$ with
$\overrightarrow{R}(X)=A \cup B$. Since $R$ is a partial surjective map, $\overleftarrow{R}(A)$ and $\overleftarrow{R}(B)$ are non-empty equivariant subsets of $X$. Since $X$ is indecomposable, by Lemma \ref{cyclic nom}, $X=\overleftarrow{R}(A)=\overleftarrow{R}(B)$ which is a contradiction. This is because, 
$\overleftarrow{R}(A)\cap \overleftarrow{R}(B)=\overleftarrow{R}(A\cap B)=\emptyset$, by Remark \ref{prop of BB}. 

 \medskip

{\rm(ii)} By Lemma \ref{cyclic nom}, there exists $y\in Y$ with $Y=Gy$. Let $t\in Y$ and $(x, z)\in R$. Then, there exist $g_1, g_2\in G$ with $t=g_1 y$ and $z=g_2 y$. Now, since $R$ is equivariant, we get that $(g_1 g_2^{-1} x, t)=(g_1 g_2^{-1} x, g_1 y)=(g_1 g_2^{-1} x, g_1 g_2^{-1} z)\in R$; 
 meaning that $R$ is surjective.
\end{proof}

\begin{proposition} \label{opp}
Let $X,Y$ be nominal sets and $R\in \mathcal{R}(X, Y)$ be equivariant. If $R$ is epic, then $R$ is surjective.
\end{proposition}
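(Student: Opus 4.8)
The plan is to prove the contrapositive in ${\bf Rel}({\bf Nom})$: if $R$ is not surjective, then $R$ is not epic in the sense of Definition \ref{zzzz}(vi). So suppose there is some $y_0\in Y$ with $y_0\notin {\rm Im}\,R$ (if $Y=\emptyset$ the statement is vacuous, so we may assume $Y\neq\emptyset$), and I will produce two \emph{distinct} equivariant relations $S,T:Y\longrightarrow Y$ with $S\circ R=T\circ R$, which contradicts epicness.

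First I record that the orbit of $y_0$ avoids ${\rm Im}\,R$. Since $R$ is equivariant, ${\rm Im}\,R=\overrightarrow{R}(X)$ is an equivariant subset of $Y$ (this is the case $X'=X$ of Corollary \ref{n}(i), taking $Y'=Y$); alternatively it is immediate, since $(x,y)\in R$ gives $(\pi x,\pi y)\in R$. Hence if $\pi y_0\in {\rm Im}\,R$ for some $\pi\in {\rm Perm}(\mathbb{D})$, equivariance of ${\rm Im}\,R$ would force $y_0=\pi^{-1}(\pi y_0)\in {\rm Im}\,R$, a contradiction; therefore ${\rm Perm}(\mathbb{D})y_0\cap {\rm Im}\,R=\emptyset$.

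Now set $S=\Delta_Y=id_Y$ and $T=\Delta_Y\setminus\{(y,y):y\in {\rm Perm}(\mathbb{D})y_0\}$. The orbit ${\rm Perm}(\mathbb{D})y_0$ is an equivariant subset of $Y$, so $\{(y,y):y\in {\rm Perm}(\mathbb{D})y_0\}$ and $\Delta_Y$ are equivariant subsets of $Y\times Y$, whence their difference $T$ is an equivariant relation; thus $S$ and $T$ are genuine morphisms of ${\bf Rel}({\bf Nom})$, and $S\neq T$ because $(y_0,y_0)\in S\setminus T$. To finish, I check the composites agree: for any $(x,z)$ we have $(x,z)\in T\circ R$ iff there is $y$ with $(x,y)\in R$ and $(y,z)\in T$; but $(x,y)\in R$ forces $y\in {\rm Im}\,R$, hence $y\notin {\rm Perm}(\mathbb{D})y_0$, and for such $y$ the condition $(y,z)\in T$ is just $z=y$. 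Therefore $T\circ R=\Delta_Y\circ R=R=S\circ R$ with $S\neq T$, so $R$ is not epic.

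The argument is essentially the classical one showing epimorphisms in ${\bf Rel}$ are surjective; the only point that needs care in the nominal setting is that the witness $T$ must be an equivariant relation, which is why one deletes the \emph{entire} orbit ${\rm Perm}(\mathbb{D})y_0$ rather than the single pair $(y_0,y_0)$, and the equality $S\circ R=T\circ R$ then hinges on the disjointness ${\rm Perm}(\mathbb{D})y_0\cap {\rm Im}\,R=\emptyset$, which in turn rests on ${\rm Im}\,R$ being an equivariant subset of $Y$.
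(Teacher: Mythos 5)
Your proof is correct, and it runs on the same engine as the paper's --- test epicness against two distinct equivariant relations that agree after precomposition with $R$, the key nominal fact being that ${\rm Im}\,R$ is an equivariant subset of $Y$, so the whole orbit ${\rm Perm}(\mathbb{D})y_0$ of a missed point avoids ${\rm Im}\,R$ --- but your witnesses differ from the paper's and the argument is leaner for it. The paper first disposes of indecomposable $Y$ separately via Lemma \ref{indecom is surj}(ii), and then for the remaining case introduces an auxiliary discrete two-element nominal set $Z=\{\theta_1,\theta_2\}$ and compares $R_1={\rm Im}\,R\times\{\theta_1\}$ with $R_2=R_1\cup({\rm Perm}(\mathbb{D})y\times\{\theta_2\})$; you instead stay inside $Y$ and compare $\Delta_Y$ with $\Delta_Y$ minus the diagonal over the missed orbit. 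Your route needs no auxiliary object and no case split (the paper's split is not actually required by its own second construction), and it isolates the one genuinely nominal point, namely that one must delete the entire orbit rather than the single pair $(y_0,y_0)$ for the test relation to remain a morphism of ${\bf Rel}({\bf Nom})$. What the paper's construction buys is a slightly more vivid picture --- $R_2$ detects the missed orbit by sending it to a fresh element --- but the two arguments are logically interchangeable, and yours additionally handles the degenerate cases ($Y=\emptyset$, $R=\emptyset$) without extra fuss.
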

\begin{proof}

 Notice that, if $Y$ is indecomposable, then $R$ is surjective, by Lemma \ref{indecom is surj}(ii). So, suppose $Y$ is decomposable and $R$ is epic. If $R$ is not surjective, then there exists  $y\in Y$, but $y\notin {\rm Im}R$. Take $Z=\{\theta_1,\theta_2\}$ be a discrete nominal set and
 $R_1:{\rm Im}R\longrightarrow \{\theta_1\}$ define $R_1={\rm Im}R \times \{\theta_1\}$ and $R_2:{\rm Im}R\longrightarrow \{\theta_1,\theta_2\}$ define $R_2=({\rm Im}R\times \{\theta_1\})\cup ({\rm Perm}(\mathbb{D})y\times \{\theta_2\})$. 
 Notice that, since ${\rm Im}R$ is an equivariant subset of $Y$, we get that $R_1$ and $R_2$ are equivariant relations. For given $x\in {\rm Dom}R$, we have $\overrightarrow{R}(x)\subseteq {\rm ImR}$ and so $R_1\circ R=R_2\circ R$. Since $R$ is epic, we get $R_1=R_2$, which is a contradiction.
 \end{proof}

 Now, we show that the converse of  Proposition \ref{opp} is not correct, in general.

\begin{example}\label{surj not epic}
 Suppose $\mathbb{D}$ is a nominal set. Consider the equivariant relations $\sharp_{\mathbb{D}}=\{(d,d'): d\neq d'\}=\mathbb{D}^{(2)}$ and $R=\mathbb{D}^2=\mathbb{D}\times \mathbb{D}$. By Theorem \ref{sharp nonempty}(iv), $\sharp_{\mathbb{D}}$ is surjective. We have $R\circ \sharp_{\mathbb{D}}=R=\sharp_{\mathbb{D}} \circ \sharp_{\mathbb{D}}$, but $R\neq \sharp_{\mathbb{D}}$  meaning that $\sharp_{\mathbb{D}}$ is not epic. 
\end{example}

\begin{note}
{\rm(i)} If a relation $R$ has a right inverse, then $R$ is surjective.
This is because morphisms having a right inverse are epimorphisms and so
 by Proposition \ref{opp}, we get the result.

 \medskip

{\rm(ii)} Example \ref{surj not epic} also shows that the converse of (ii) does not hold. Indeed, if $S$ is an equivariant relation on $\mathbb{D}$ with $\sharp_{\mathbb{D}} \circ S=id_{{\mathbb{D}}}$,  since $(d,d)\in \sharp_{\mathbb{D}} \circ S$ and $\sharp_{\mathbb{D}}$ is surjective, there exists $d' \neq d$ with $(d',d)\in \sharp_{\mathbb{D}}$ and $(d,d')\in S$. For given $d''\neq d', d$, we have $(d', d'')\in \sharp_{\mathbb{D}}$. Thus $(d, d'')\in \sharp_{\mathbb{D}} \circ S=id_{\mathbb{D}}$ meaning that $d=d''$ and this is a contradiction.
\end{note}

By the same scheme of \cite{john} but different in details we have the  following theorem.

\begin{theorem} \label{er}
Let $X,Y$ be nominal sets and $R\in \mathcal{R}(X, Y)$ be equivariant. Then, the following statements are equivalent.

{\rm(i)} The relation $R$ is monic and ${\rm Dom}R=X$.

{\rm(ii)} The map $\overrightarrow{R}:\mathcal{P}_{_{\rm fs}}(X) \rightarrow \mathcal{P}_{_{\rm fs}}(Y)$ is  injective  and ${\rm Dom}R=X$.

{\rm(iii)} The relation $R$ is total injective.
\end{theorem}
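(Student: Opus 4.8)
The plan is to prove the cycle $\mathrm{(iii)}\Rightarrow\mathrm{(i)}\Rightarrow\mathrm{(ii)}\Rightarrow\mathrm{(iii)}$. First I would record two facts used throughout. For composable equivariant relations $S\colon Z\to X$ and $R\colon X\to Y$ and any $z\in Z$ one has $\overrightarrow{R\circ S}(z)=\overrightarrow{R}(\overrightarrow{S}(z))$, and by Corollary \ref{n}(ii) the set $\overrightarrow{S}(z)$ lies in $\mathcal{P}_{\rm fs}(X)$, so the equivariant map $\overrightarrow{R}\colon\mathcal{P}_{\rm fs}(X)\to\mathcal{P}_{\rm fs}(Y)$ of Proposition \ref{R_* and R^*}(i) may be applied to it; also, if $R$ is total injective then $\overrightarrow{R}$ is injective on all of $\mathcal{P}(X)$ (if $\overrightarrow{R}(A)=\overrightarrow{R}(B)$ and $a\in A$, pick $y$ with $\overleftarrow{R}(y)=\{a\}$; then $y\in\overrightarrow{R}(a)\subseteq\overrightarrow{R}(A)=\overrightarrow{R}(B)$ forces some $b\in B$ into $\overleftarrow{R}(y)=\{a\}$, whence $a=b\in B$, and symmetrically $A=B$). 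With these, $\mathrm{(iii)}\Rightarrow\mathrm{(i)}$ is immediate: for each $x$ the witness $y$ gives $(x,y)\in R$, so ${\rm Dom}\,R=X$; and if $R\circ S=R\circ T$ then $\overrightarrow{R}(\overrightarrow{S}(z))=\overrightarrow{R}(\overrightarrow{T}(z))$ for all $z$, hence $\overrightarrow{S}(z)=\overrightarrow{T}(z)$ and $S=T$, so $R$ is monic.

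For $\mathrm{(i)}\Rightarrow\mathrm{(ii)}$, since totality is part of the hypothesis, only the injectivity of $\overrightarrow{R}\colon\mathcal{P}_{\rm fs}(X)\to\mathcal{P}_{\rm fs}(Y)$ is at issue. Given $A,B\in\mathcal{P}_{\rm fs}(X)$ with $\overrightarrow{R}(A)=\overrightarrow{R}(B)$, I would set $C={\rm supp}\,A\cup{\rm supp}\,B$, fix an enumeration $\vec c=(c_1,\dots,c_n)$ of $C$, and take $Z={\rm Perm}(\mathbb D)\cdot\vec c\subseteq\mathbb D^{n}$, a (cyclic) nominal set each of whose elements is of the form $\pi\vec c$ and whose stabiliser of $\vec c$ is $({\rm Perm}(\mathbb D))_{C}$ --- a group that fixes both $A$ and $B$ because $C$ supports each of them. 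Then I would define the equivariant relations $S=\{(\pi\vec c,\pi a):\pi\in{\rm Perm}(\mathbb D),\ a\in A\}$ and $T=\{(\pi\vec c,\pi b):\pi\in{\rm Perm}(\mathbb D),\ b\in B\}$ from $Z$ to $X$; the stabiliser remark makes these well-defined, and one checks $\overrightarrow{S}(\pi\vec c)=\pi A$ and $\overrightarrow{T}(\pi\vec c)=\pi B$. Using Proposition \ref{R_* and R^*}(i) together with $\overrightarrow{R}(A)=\overrightarrow{R}(B)$ gives $\overrightarrow{R\circ S}(\pi\vec c)=\overrightarrow{R}(\pi A)=\pi\overrightarrow{R}(A)=\pi\overrightarrow{R}(B)=\overrightarrow{R\circ T}(\pi\vec c)$ for every $\pi$, so $R\circ S=R\circ T$; monicity yields $S=T$, hence $A=\overrightarrow{S}(\vec c)=\overrightarrow{T}(\vec c)=B$.

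For $\mathrm{(ii)}\Rightarrow\mathrm{(iii)}$, I would argue by contradiction: fix $x_0\in X$ and suppose $\overleftarrow{R}(y)\neq\{x_0\}$ for every $y$. Then each $y\in\overrightarrow{R}(x_0)$ has $x_0\in\overleftarrow{R}(y)$ with $\overleftarrow{R}(y)\neq\{x_0\}$, so some $x'\neq x_0$ satisfies $(x',y)\in R$; hence $\overrightarrow{R}(x_0)\subseteq\overrightarrow{R}(X\setminus\{x_0\})$ and therefore $\overrightarrow{R}(X)=\overrightarrow{R}(X\setminus\{x_0\})$. But $X$ is an equivariant subset of $X$ and $X\setminus\{x_0\}$ is supported by ${\rm supp}\,x_0$ (a permutation fixing ${\rm supp}\,x_0$ pointwise fixes $\{x_0\}$ setwise, hence fixes its complement in $X$), so both belong to $\mathcal{P}_{\rm fs}(X)$ and are distinct --- contradicting injectivity of $\overrightarrow{R}$ on $\mathcal{P}_{\rm fs}(X)$. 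Thus some $y$ has $\overleftarrow{R}(y)=\{x_0\}$, and since $x_0$ was arbitrary, $R$ is total injective.

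The main obstacle is the step $\mathrm{(i)}\Rightarrow\mathrm{(ii)}$. In ${\bf Rel}$ one would simply probe $R$ with the subsets $A$ and $B$ viewed as relations out of the one-point object, but in ${\bf Rel}({\bf Nom})$ these need not be equivariant, so the source object must be manufactured: taking the orbit of an enumeration of ${\rm supp}\,A\cup{\rm supp}\,B$ is precisely what keeps the relevant stabiliser small enough to fix $A$ and $B$, thereby making the two test relations equivariant and well-defined. Everything else is a transcription of the familiar set-theoretic characterisation of monic relations, with Corollary \ref{n}(ii) and Proposition \ref{R_* and R^*}(i) handling the finite-support bookkeeping.
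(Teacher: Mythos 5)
Your proof is correct, and the overall cycle of implications (i)$\Rightarrow$(ii)$\Rightarrow$(iii)$\Rightarrow$(i) coincides with the paper's, but your treatment of (i)$\Rightarrow$(ii) is genuinely different and, in one respect, more careful. The paper probes the monic $R$ with the relations $S=\{(*,u):u\in U\}$ and $T=\{(*,v):v\in V\}$ out of a singleton; these are only finitely supported, not equivariant unless $U$ and $V$ happen to be equivariant subsets, so cancelling them requires reading ``monic'' as cancellation against all finitely supported relations rather than against morphisms of ${\bf Rel}({\bf Nom})$. You instead manufacture a genuine object of the category --- the orbit of a tuple enumerating ${\rm supp}\,A\cup{\rm supp}\,B$ --- whose point stabiliser fixes $A$ and $B$, which makes your test relations equivariant; this buys a proof that works with the categorical notion of monic, at the cost of the extra orbit construction. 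Your (ii)$\Rightarrow$(iii) is essentially the paper's argument (both hinge on $X\setminus\{x\}\in\mathcal{P}_{\rm fs}(X)$ and on comparing $\overrightarrow{R}(X)$ with $\overrightarrow{R}(X\setminus\{x\})$), merely organised by contradiction; indeed your phrasing, which explicitly locates a $y\in\overrightarrow{R}(x_0)\setminus\overrightarrow{R}(X\setminus\{x_0\})$, is slightly cleaner than the paper's. For (iii)$\Rightarrow$(i) the paper chases elements directly through $R\circ R_1=R\circ R_2$ using injectivity of $R$, whereas you first upgrade total injectivity to injectivity of $\overrightarrow{R}$ on all of $\mathcal{P}(X)$ and then cancel pointwise via $\overrightarrow{R\circ S}(z)=\overrightarrow{R}(\overrightarrow{S}(z))$; both are valid and of comparable length.
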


\begin{proof}
(i$\Rightarrow$ii) Suppose $\overrightarrow{R}(U)=\overrightarrow{R}(V)$ for some $U,V\in \mathcal{P}_{_{\rm fs}}(X)$. We show $U=V$. To prove that, we consider finitely supported relations $S=\{(*,u):u\in U\}$ and $T=\{(*,v):v\in V\}$. Then, clearly, $R\circ S=R\circ T$. Hence  $S=T$, since $R$ is monic. Thus $U=V$. 

 \medskip

(ii$\Rightarrow$iii) Notice that, $X-\{x\}\in \mathcal{P}_{_{\rm fs}}(X)$, for every $x\in X$. To do so, let $C$ be a finite support of $x$ and $d_1, d_2\notin C$. Then, $(d_1 \ d_2)x=x$. Now, for all $y\in X-\{x\}$, we have 
$x=(d_1\ d_2)x\neq (d_1\ d_2)y\in X-\{x\}$ meaning that $C$ is a finite support for $X-\{x\}$.
 Since $\overrightarrow{R}$ is injective and $X\neq X-\{x\}$, we have $\overrightarrow{R}(X)\neq \overrightarrow{R}(X-\{x\})$. For given $x\in X$, since  ${\rm Dom}R=X$, there exists $y\in Y$ with $(x,y)\in R$. Now, if there exists $x''\in X-\{x\}$ with $(x'', y)\in R$, then $y\in \overrightarrow{R}(X-\{x\})$ which is a contradiction.
 
  \medskip

(iii$\Rightarrow$i) First, notice that since $R$ is total, we get that ${\rm Dom}R=X$. Now, suppose $R_1, R_2\in \mathcal{R}(Z, X)$ are equivariant with $R\circ R_1=R\circ R_2$. Let $(z,x)\in R_1$. Then, by the assumption, there exists $y\in Y$ with $(x,y)\in R$. Thus $(z,y)\in R\circ R_1=R\circ R_2$ and so there exists $x'\in X$ with $(z,x')\in R_2$ and $(x',y)\in R$. Since $R$ is injective and $(x,y), (x',y)\in R$, we get that $x=x'$. So, $(z,x)=(z,x')\in R_2$ implies that $R_1\subseteq R_2$. Analogously, $R_2\subseteq R_1$, and we are done.
\end{proof}

\begin{lemma}\label{partial sur is one one}
Let $X,Y$ be nominal sets and $R\in \mathcal{R}(X, Y)$ be equivariant. Then, the following statements are equivalent.

{\rm(i)} The map $\overleftarrow{R}:\mathcal{P}_{_{\rm fs}}(Y)\rightarrow \mathcal{P}_{_{\rm fs}}(X)$ is injective and ${\rm Im}R=Y$.

{\rm(ii)} The map $R$ is  partial surjective.
\end{lemma}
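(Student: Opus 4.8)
The plan is to prove the two implications separately, using the first formulation of \emph{partial surjective map} in Definition \ref{zzzz}(iv): for every $y\in Y$ there is some $x\in X$ with $\overrightarrow{R}(x)=\{y\}$ (that is, $y$ is the only element related to such an $x$). Equivariance of $R$ enters only through Proposition \ref{comper}(ii), which guarantees $\overleftarrow{R}(B)\in\mathcal{P}_{_{\rm fs}}(X)$ for every $B\in\mathcal{P}_{_{\rm fs}}(Y)$, so that $\overleftarrow{R}$ is a well-defined map as in the statement; the remaining content is elementary relation algebra, repeatedly exploiting that $\overleftarrow{R}$ preserves unions, $\overleftarrow{R}(A\cup B)=\overleftarrow{R}(A)\cup\overleftarrow{R}(B)$.

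For (ii)$\Rightarrow$(i) the equality ${\rm Im}R=Y$ is immediate, since for $y\in Y$ any $x$ with $\overrightarrow{R}(x)=\{y\}$ gives $(x,y)\in R$. For injectivity of $\overleftarrow{R}$ I would take $B,B'\in\mathcal{P}_{_{\rm fs}}(Y)$ with $\overleftarrow{R}(B)=\overleftarrow{R}(B')$ and show $B\subseteq B'$ (the reverse inclusion being symmetric): given $y\in B$, choose $x$ with $\overrightarrow{R}(x)=\{y\}$; then $x\in\overleftarrow{R}(\{y\})\subseteq\overleftarrow{R}(B)=\overleftarrow{R}(B')$, so $(x,y')\in R$ for some $y'\in B'$, whence $y'\in\overrightarrow{R}(x)=\{y\}$ and $y=y'\in B'$.

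For (i)$\Rightarrow$(ii) I would argue by contraposition: suppose $R$ is not a partial surjective map, witnessed by some $y\in Y$ with $\overrightarrow{R}(x)\neq\{y\}$ for all $x$. The key observation is that this forces $\overleftarrow{R}(\{y\})\subseteq\overleftarrow{R}(Y\setminus\{y\})$: if $(x,y)\in R$ then $y\in\overrightarrow{R}(x)$ while $\overrightarrow{R}(x)\neq\{y\}$, so some $y''\neq y$ satisfies $(x,y'')\in R$, i.e. $x\in\overleftarrow{R}(Y\setminus\{y\})$. Consequently $\overleftarrow{R}(Y)=\overleftarrow{R}(\{y\})\cup\overleftarrow{R}(Y\setminus\{y\})=\overleftarrow{R}(Y\setminus\{y\})$. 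Since $Y$ and $Y\setminus\{y\}$ are distinct elements of $\mathcal{P}_{_{\rm fs}}(Y)$ — that $Y\setminus\{y\}$ is finitely supported follows exactly as in the proof of Theorem \ref{er}, where a finite support of $y$ is shown to support $Y\setminus\{y\}$, while $Y$ has empty support — the map $\overleftarrow{R}$ is not injective, so (i) fails.

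The step I expect to be the main obstacle is choosing the right test subsets in (i)$\Rightarrow$(ii). "Local" choices such as $B=\{y\}$ and $B'=\{y,y'\}$ do not work, because an arbitrary element of ${\rm Dom}R$ need not be related to two distinct elements, so one cannot expect $\overleftarrow{R}(\{y'\})\subseteq\overleftarrow{R}(\{y\})$; the resolution is the "global" pair $Y\setminus\{y\}$ versus $Y$, for which the failure of condition (ii) at $y$ says precisely that $\overleftarrow{R}(\{y\})$ is absorbed into $\overleftarrow{R}(Y\setminus\{y\})$, collapsing the two preimages. A secondary point to keep in mind is that the domain of $\overleftarrow{R}$ is $\mathcal{P}_{_{\rm fs}}(Y)$, not $\mathcal{P}(Y)$, so both test subsets must be checked to be finitely supported before they can be used.
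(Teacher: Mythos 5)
Your proof is correct and follows essentially the same route as the paper, which simply dualizes the argument of Theorem \ref{er}: your test pair $Y$ versus $Y\setminus\{y\}$ (with $Y\setminus\{y\}$ supported by a finite support of $y$) is exactly the dual of the pair $X$ versus $X-\{x\}$ used there, and your (ii)$\Rightarrow$(i) direction is the argument of Lemma \ref{lem for injec}(ii) combined with ${\rm Im}R=Y$. No gaps.
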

\begin{proof}
The proof is similar to Theorem \ref{er}. 
\end{proof}

\begin{remark}
If $R$ is a partial surjective map, then similar to the proof of (iii$\Rightarrow$i) in Theorem \ref{er}
 $R$ is epic, but the converse is not true (see Example \ref{partial et epic}(ii)).
\end{remark}

\begin{corollary}\label{parsur}
Suppose $R\in \mathcal{R}(X, Y)$ is a partial surjective equivariant map between nominal sets $X$ and $Y$. Let $(x,y)\in R$. Then,

{\rm(i)} ${\rm supp}\,\overleftarrow{R}(y)={\rm supp}\,y$.

{\rm(ii)} ${\rm supp}\,y\subseteq {\rm supp}\,x$.
\end{corollary}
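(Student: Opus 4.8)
The plan is to deduce both statements at once from Corollary \ref{yhm}, applied not to $R$ but to its inverse relation $R^{-1}$. First I would check that $R^{-1}$ is again an equivariant injective relation. Equivariance of $R^{-1}$ is the equivalence (i)$\Leftrightarrow$(ii) of Theorem \ref{rrr}. Injectivity of $R^{-1}$ is merely a restatement of the defining property of a partial surjective map: by Definition \ref{zzzz}(iv) we have $(x,y),(x,y')\in R \Rightarrow y=y'$, and this is literally the condition $(y,x),(y',x)\in R^{-1} \Rightarrow y=y'$, i.e.\ injectivity of $R^{-1}$ in the sense of Definition \ref{zzzz}(i).

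Next I would record the elementary bookkeeping identity $\overrightarrow{R^{-1}}(y)=\overleftarrow{R}(y)$, which holds for every $y$ straight from the definitions of direct and inverse image. Since $(x,y)\in R$ gives $(y,x)\in R^{-1}$, Corollary \ref{yhm}(i) applied to $R^{-1}$ yields ${\rm supp}\,y={\rm supp}\,\overrightarrow{R^{-1}}(y)={\rm supp}\,\overleftarrow{R}(y)$, which is part (i); and Corollary \ref{yhm}(ii) applied to $R^{-1}$ yields ${\rm supp}\,y\subseteq {\rm supp}\,x$, which is part (ii).

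There is essentially no obstacle here: the one genuine verification is that passing from $R$ to $R^{-1}$ preserves both hypotheses of Corollary \ref{yhm}, namely equivariance (Theorem \ref{rrr}) and injectivity (the unwinding of Definition \ref{zzzz}(iv) above). As an alternative for part (i), the inclusion ${\rm supp}\,\overleftarrow{R}(y)\subseteq {\rm supp}\,y$ can be obtained directly from Corollary \ref{n}(iii) (since $R$ is equivariant), with the $R^{-1}$ argument then supplying only the reverse inclusion; but routing both parts through $R^{-1}$ keeps the proof uniform and short.
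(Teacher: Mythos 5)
Your proof is correct. The reduction to $R^{-1}$ is legitimate: equivariance of $R^{-1}$ is indeed Theorem \ref{rrr}(i)$\Leftrightarrow$(ii), the partial-map clause of Definition \ref{zzzz}(iv) for $R$ is verbatim the injectivity clause of Definition \ref{zzzz}(i) for $R^{-1}$, and $\overrightarrow{R^{-1}}(y)=\overleftarrow{R}(y)$ is immediate from Definition \ref{def image and domain}; so Corollary \ref{yhm} applied to $R^{-1}$ at the pair $(y,x)$ delivers both claims at once. The paper reaches the same destination by a slightly different route: for (i) it invokes Lemma \ref{partial sur is one one} to say that the induced map $\overleftarrow{R}:\mathcal{P}_{\rm fs}(Y)\to\mathcal{P}_{\rm fs}(X)$ is an injective equivariant map and then applies Corollary \ref{yhm}(i) to that map (i.e.\ one level up, in the finitely supported powersets), while for (ii) it simply says the argument is ``similar to the proof of Corollary \ref{yhm}(ii)'', meaning the transposition computation of Lemma \ref{rrrr}(ii) is to be re-run with the roles of the two coordinates swapped. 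Your version buys uniformity and economy: a single application of an already-proved corollary to the transposed relation replaces one appeal to a powerset-level lemma plus one repeated computation, and it makes explicit the duality (partial surjective map for $R$ $=$ injectivity plus totality for $R^{-1}$) that the paper leaves implicit. The only cost is the small bookkeeping you already carried out, namely checking that both hypotheses of Corollary \ref{yhm} survive the passage to $R^{-1}$.
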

\begin{proof}
{\rm(i)} By Lemma \ref{partial sur is one one}, $\overleftarrow{R}$ is an injective equivariant map. By Corollary \ref{yhm}(i), we
get the result.

 \medskip

{\rm(ii)} Similar to the proof of Corollary \ref{yhm}(ii).
\end{proof}

\begin{example}\label{partial et epic}
{\rm(i)} The relation $\sharp_{\mathbb{D}}=\{(d,d'): d\neq d'\}$ is a surjective equivariant relation on $\mathbb{D}$, but the map $\overleftarrow{\sharp_{\mathbb{D}}}:\mathcal{P}_{_{\rm fs}}(\mathbb{D})\rightarrow \mathcal{P}_{_{\rm fs}}(\mathbb{D})$ is not injective. To do so, let $A, B\in \mathcal{P}_{_{\rm fs}}(\mathbb{D})$. Then $A$ and $B$ are finite or cofinite, by Remark \ref{P(X)-is-act}(iii). Take $A=\{d_1,d_2\}$ and $B=\{d_3,d_4\}$. Now, $\overleftarrow{\sharp_{\mathbb{D}}}(A)=
\{d\in \mathbb{D}: (\exists a\in A)\,   a \sharp_{\mathbb{D}} d\}=\mathbb{D}$ and similarly $\overleftarrow{\sharp_{\mathbb{D}}}(B)=\mathbb{D}$. So $\overleftarrow{\sharp_{\mathbb{D}}}(A)=\overleftarrow{\sharp_{\mathbb{D}}}(B)$, but $A\neq B$.

 \medskip

{\rm(ii)} Let $R\in \mathcal{R}(\mathbb{D}, \mathbb{D}^{(2)})$ be equivariant defined by $R=\{(d, (d,d')): d\neq d'\}$. Then, $R$ is surjective but  not a partial map. This is because, $(d, (d, d')), (d, (d, d''))\in R$ for $d''\neq d'$. On the other hand, $S=\{((d,d'),d): d\neq d'\}$ is an equivariant relation and $R\circ S=id_{\mathbb{D}^{(2)}}$, meaning that $R$ is epic. So, epic and partial surjective maps are not equivalent.
\end{example}


\section{Sheaf representation of nominal sets in ${\rm\bf Rel}$({\bf Nom})}

The sheaf representation of nominal sets provides a more general and abstract setting, which can be useful in various areas of mathematics and computer science, such as the study of programming languages with binding constructs.   
Recall from \cite[Theorem 6.8]{Pitts2} that the category  {\bf Nom} can be considered as a sheaf-subcategory of ${\bf Set}^{\mathcal{P}_{\rm f}(\mathbb{D})}$, by the adjunction 
$I_*: {\bf Nom}\longrightarrow {\bf Set}^{\mathcal{P}_{\rm f}(\mathbb{D})}$ and $I^*:  {\bf Set}^{\mathcal{P}_{\rm f}(\mathbb{D})}\longrightarrow {\bf Nom}$ in which $I^* \dashv I_* $ and $\mathbb{D}$ is the set of atomic names. Hence, $\bf Nom$ is a topos.  In this section, although the category ${\rm\bf Rel}$({\bf Nom}) is not a topos, see Remark \ref{topos}, we are going to examine the counterpart of functors $I^*$ and $I_*$, denoted by $\mathcal{P}_{\rm fs}^ *$ and $\mathcal{P}_{\rm fs *}$ respectively,  for their advantages. 

\begin{lemma}
\sloppypar \noindent {\rm (i)} There is an obvious inclusion $\rm ($or forgetful$\rm )$ functor $I :~{\bf Nom}\hookrightarrow~ {\bf Rel}({\bf Nom})$ that is identity on objects and takes an equivariant map $f : X\to Y$ to its underling relation.

\sloppypar \noindent {\rm (ii)} The inclusion  functor $I$  is a left adjoint for the functor ${\mathcal{P}}_{\rm fs}:~{\bf Rel}({\bf Nom})~\longrightarrow~{\bf Nom} $ that takes every object $X\in {\bf Rel}({\bf Nom})$ to ${\mathcal{P}}_{\rm fs}(X)$ and every equivqrint relation $R:X\to Y$ maps to $\overrightarrow{R}:{{\mathcal{P}}_{\rm fs}(X)\to \mathcal{P}}_{\rm fs}(Y)$. That is, we have:
\[
\xymatrix@R=0.5pc {
\mathcal{P}_{\rm fs }:{\bf Rel}({\bf Nom})\ar[r]& {\bf Nom}\\
X\ar@{~>}[r]\ar[dd]_R&\mathcal{P}_{\rm fs }(X)\ar[dd]^{\overrightarrow{R}}\\
\qquad \ar@{~>}[r]&\quad\\
Y\ar@{~>}[r]&\mathcal{P}_{\rm fs }(Y)
}
\]
\end{lemma}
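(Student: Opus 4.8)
The plan is to verify the two claims in turn. For part (i), I would first check that the prescription "identity on objects, and $f \mapsto \{(x,y) : f(x)=y\}$" actually lands in $\mathbf{Rel}(\mathbf{Nom})$: if $f : X \to Y$ is equivariant then its graph $G_f = \{(x,y): f(x)=y\}$ satisfies $\pi\cdot G_f = \{(\pi x, \pi f(x))\} = \{(\pi x, f(\pi x))\} = G_f$, so $G_f$ is an equivariant subset of $X\times Y$, hence a morphism in $\mathbf{Rel}(\mathbf{Nom})$. Then I would confirm functoriality: $G_{\mathrm{id}_X} = \Delta_X$, which is the identity of $X$ in $\mathbf{Rel}(\mathbf{Nom})$, and $G_{g\circ f} = G_g \circ G_f$ by the standard fact that the graph of a composite is the relational composite of the graphs. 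Finally, $I$ is faithful because a function is determined by its graph, and injective on objects; these are the routine verifications that $I$ is a genuine (inclusion-type) functor.

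For part (ii), I would first check that $\mathcal{P}_{\rm fs}$ as described is a well-defined functor. On objects, $\mathcal{P}_{\rm fs}(X)$ is a nominal set by Lemma \ref{Xfs is nom} applied to the $\mathrm{Perm}(\mathbb{D})$-set $\mathcal{P}(X)$ of Remark \ref{P(X)-is-act}(i). On morphisms, given an equivariant relation $R : X \to Y$, the assignment $A \mapsto \overrightarrow{R}(A)$ sends $\mathcal{P}_{\rm fs}(X)$ into $\mathcal{P}_{\rm fs}(Y)$ by Corollary \ref{n}(i)–(ii) (finitely supported sets have finitely supported direct images, since $\mathrm{supp}\,\overrightarrow{R}(A) \subseteq \mathrm{supp}\,A$), and this map is equivariant by Proposition \ref{R_* and R^*}(i). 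Functoriality of $\overrightarrow{(-)}$ amounts to $\overrightarrow{\Delta_X} = \mathrm{id}_{\mathcal{P}_{\rm fs}(X)}$ and $\overrightarrow{S\circ R} = \overrightarrow{S}\circ\overrightarrow{R}$, which is immediate from the definition of relational composition and direct image.

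The substantive part is the adjunction $I \dashv \mathcal{P}_{\rm fs}$. I would exhibit the unit $\eta_X : X \to \mathcal{P}_{\rm fs}(X)$ in $\mathbf{Nom}$ by $\eta_X(x) = \{x\}$, which is equivariant since $\pi\cdot\{x\} = \{\pi x\}$ and has singleton values (hence finite support). Then, given any equivariant relation $R : I(X) \to Z$, i.e. an equivariant relation $R : X \to Z$ in $\mathbf{Rel}(\mathbf{Nom})$, I would produce the unique equivariant map $\bar{R} : X \to \mathcal{P}_{\rm fs}(Z)$ with $\overrightarrow{R}\circ\eta_X = R$ in the appropriate sense; the natural candidate is $\bar{R}(x) = \overrightarrow{R}(x)$, which is finitely supported by Corollary \ref{n}(ii) and equivariant by Theorem \ref{rrr}(iv). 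One checks $\bar{R}$ is the unique map making the triangle commute: if $h : X \to \mathcal{P}_{\rm fs}(Z)$ also works, then tracing $\{x\}$ through $\overrightarrow{h}\circ\eta_X$ forces $h(x) = \overrightarrow{R}(x)$. Naturality of the bijection $\mathrm{Hom}_{\mathbf{Rel}(\mathbf{Nom})}(I(X),Z) \cong \mathrm{Hom}_{\mathbf{Nom}}(X,\mathcal{P}_{\rm fs}(Z))$ in $X$ and $Z$ is then a diagram chase. I expect the main obstacle to be pinning down precisely the correct form of the counit/triangle identity — i.e. identifying which morphism in $\mathbf{Rel}(\mathbf{Nom})$ plays the role of "evaluation" $\overrightarrow{R}$ composed with $\eta$ and checking it recovers $R$ on the nose — since $\mathbf{Rel}(\mathbf{Nom})$ morphisms are relations while $\mathbf{Nom}$ morphisms are functions, so the bookkeeping between a relation $R$ and the function $x \mapsto \overrightarrow{R}(x)$ must be handled carefully; everything else is a routine check using the results already established.
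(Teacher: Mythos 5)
Your overall strategy coincides with the paper's: part (i) is routine, part (ii) is checked via the references to Remark \ref{P(X)-is-act}, Lemma \ref{Xfs is nom} and Proposition \ref{R_* and R^*}, and the adjunction is exhibited through the unit $\eta_X(x)=\{x\}$. However, there is a real (if easily repaired) gap in how you verify the universal property: you have the quantifiers pointing the wrong way. You start from an equivariant relation $R:I(X)\to Z$ and produce the map $\bar R = \overrightarrow{R}\circ\eta_X : X\to\mathcal{P}_{\rm fs}(Z)$, and then argue that $\bar R$ is ``the unique map making the triangle commute'' --- but $\bar R$ is defined by a formula from $R$, so its uniqueness is vacuous and proves nothing. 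What the universal property of $\eta_X$ actually requires (and what the paper proves) is the opposite direction: for \emph{every} equivariant map $f:X\to\mathcal{P}_{\rm fs}(Z)$ there exists a \emph{unique} equivariant relation $R_f:I(X)\to Z$ with $\overrightarrow{R_f}\circ\eta_X=f$. The existence half --- defining $R_f=\{(x,y):y\in f(x)\}$, checking that it is equivariant because $f$ is, and computing $\overrightarrow{R_f}(\{x\})=f(x)$ --- is precisely the substantive step of the paper's proof, and it does not appear anywhere in your proposal. Equivalently, in the hom-set formulation you invoke at the end, you have only shown that $R\mapsto\overrightarrow{R}\circ\eta_X$ is injective (two relations with the same fibres $\overrightarrow{R}(x)$ coincide); surjectivity onto $\mathrm{Hom}_{\bf Nom}(X,\mathcal{P}_{\rm fs}(Z))$ is asserted but never established.

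The repair is short: given $f$, set $R_f=\{(x,y): y\in f(x)\}$; equivariance of $f$ gives $(\pi x,\pi y)\in R_f$ whenever $(x,y)\in R_f$, and $\overrightarrow{R_f}(\{x\})=f(x)$ by construction, while uniqueness of $R_f$ follows because any relation $S$ with $\overrightarrow{S}(\{x\})=f(x)$ for all $x$ must have exactly the pairs $(x,y)$ with $y\in f(x)$. Two further minor points: the expression $\overrightarrow{h}$ for a map $h:X\to\mathcal{P}_{\rm fs}(Z)$ in your uniqueness argument is not well formed ($h$ is a function, not a relation on $X\times Z$); and the containment $\mathrm{supp}\,\overrightarrow{R}(A)\subseteq\mathrm{supp}\,A$ you attribute to Corollary \ref{n}(i)--(ii) is really Proposition \ref{comper}(i) specialised to $\mathrm{supp}\,R=\emptyset$, since Corollary \ref{n} only treats equivariant subsets and singletons.
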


\begin{proof}
{\rm (i)} It is clear.

\medskip
{\rm (ii)} First, it should be noted that $\mathcal{P}_{\rm fs }(X)$, for any nominal set $X$, is a nominal set according to Remark \ref{P(X)-is-act}(i) and Lemma \ref{Xfs is nom}, and that $\overrightarrow{R}$  is an equivariant map according to Proposition \ref{R_* and R^*}(i). As a result, it is simple to verify that $\mathcal{P}_{\rm fs }$ is a functor.
Now, to prove $I\dashv \mathcal{P}_{\rm fs }$, we show that $\eta _{X}:X\to {\mathcal{P}}_{\rm fs}(I (X))$ defined by $\eta _{X}(x)=\{x\}$ is a universal ${\mathcal{P}}_{\rm fs}$-arrow and $\eta=(\eta _{X})_{X\in {\bf Nom}}$ is a natural transformation. Indeed, for every equivariant map $f:X\to {\mathcal{P}}_{\rm fs}(Y)$, in which $Y\in {\bf Rel}({\bf Nom})$, we define the relation $R_f=\{(x,y)\ :\ y\in f(x)\}\in {\mathcal{R}(I(X),Y)}$. Since $f$ is equivariant, so is $R_f$. We also have $\overrightarrow{R_f} \circ \eta_{X}(x)=\overrightarrow{R_f}(\{x\})=f(x)$. That is the following triangle is commutative.
$$ 
\xymatrix{
X\ar[r]^-{\eta_{X}}\ar[rd]_-{ f}& {\mathcal{P}}_{\rm fs}(I(X))\ar[d]^-{\overrightarrow{R_f}={\mathcal{P}}_{\rm fs}(R_f)} &&I(X)\ar@{-->}[d]^-{\exists! R_f}\\
&{\mathcal{P}}_{\rm fs}(Y) &&Y}
$$ 
The uniqueness of $R_f$ with
$\overrightarrow{R_f} \circ \eta_{X}=f$ follows from its definition and the naturality of $\eta$ can be easily checked. 
\end{proof}

\begin{remark}
By Theorem 6.8 of \cite{Pitts2}, the composition  functor $\mathcal{P}_{\rm fs *}:~{\bf Rel}({\bf Nom}){\overset{\mathcal{P}_{\rm fs }}\longrightarrow} {\bf Nom}{\overset{I_{*}}\longrightarrow} {\bf Set}^{{ \mathcal{P}_{\rm f}(\mathbb{D})}}$, defined by 

\[
\xymatrix@R=0.5pc {
\mathcal{P}_{\rm fs *}:{\bf Rel}({\bf Nom})\ar[r]& {\bf Set}^{ \mathcal{P}_{\rm f}(\mathbb{D})}\\
X\ar@{~>}[r]\ar[dd]_R&\mathcal{P}_{\rm fs *}X\ar[dd]^{R_*}\\
\qquad \ar@{~>}[r]&\quad\\
Y\ar@{~>}[r]&\mathcal{P}_{\rm fs *}Y
}
\]
in which $\mathcal{P}_{\rm fs *}X:\mathcal{P}_{\rm f}(\mathbb{D})\to {\bf Set}$ mapping each $A\in \mathcal{P}_{\rm f}(\mathbb{D})$ to the set $\{k\in~ \mathcal{P}_{\rm fs}X\ :~\ {\rm supp}k\subseteq A\}$ and each equivariant relation $R:X\to Y$  to the  the natural transformation $R_*=\{{R_*}_{A}\}_{A\in \mathcal{P}_{\rm f}(\mathbb{D})}$
  defined by $${R_*}_A=\overrightarrow{R}:\mathcal{P}_{\rm fs *}X(A)\to\mathcal{P}_{\rm fs *}Y(A), \text{ for every } {A\in \mathcal{P}_{\rm f}(\mathbb{D})},$$
  assigns every object in {\bf Rel}({\bf Nom}) to a sheaf.
\end{remark}

\begin{theorem}\label{nbvcrei}
	The composion  functor $\mathcal{P}_{\rm fs *}:\bf Rel(\bf Nom)\to{\bf Set}^{ \mathcal{P}_{\rm f}(\mathbb{D})} $ has a left adjoint, which is denoted by ${\mathcal{P}_{\rm fs}}^*$.
\end{theorem}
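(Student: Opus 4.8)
The plan is to recognise $\mathcal{P}_{\rm fs *}$ as a composite of two right adjoints and to take the corresponding composite of left adjoints. By construction — see the Remark immediately preceding the statement — we have $\mathcal{P}_{\rm fs *}=I_*\circ\mathcal{P}_{\rm fs}$, where $\mathcal{P}_{\rm fs}:{\bf Rel}({\bf Nom})\to{\bf Nom}$ is the functor of the previous Lemma and $I_*:{\bf Nom}\to{\bf Set}^{\mathcal{P}_{\rm f}(\mathbb{D})}$ is the sheaf embedding of \cite[Theorem 6.8]{Pitts2}. From the previous Lemma we have the adjunction $I\dashv\mathcal{P}_{\rm fs}$, and from \cite[Theorem 6.8]{Pitts2} we have $I^*\dashv I_*$. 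The first step is then merely to set
$${\mathcal{P}_{\rm fs}}^*:=I\circ I^*:{\bf Set}^{\mathcal{P}_{\rm f}(\mathbb{D})}\longrightarrow{\bf Rel}({\bf Nom}),$$
the reverse composite of the two left adjoints.

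The second step is to verify that ${\mathcal{P}_{\rm fs}}^*\dashv\mathcal{P}_{\rm fs *}$, which I would carry out on hom-sets: for every presheaf $F\in{\bf Set}^{\mathcal{P}_{\rm f}(\mathbb{D})}$ and every object $X$ of ${\bf Rel}({\bf Nom})$, splice together the two adjunction bijections
$${\bf Rel}({\bf Nom})\big(I(I^*F),X\big)\;\cong\;{\bf Nom}\big(I^*F,\mathcal{P}_{\rm fs}X\big)\;\cong\;{\bf Set}^{\mathcal{P}_{\rm f}(\mathbb{D})}\big(F,I_*\mathcal{P}_{\rm fs}X\big)={\bf Set}^{\mathcal{P}_{\rm f}(\mathbb{D})}\big(F,\mathcal{P}_{\rm fs *}X\big),$$
the first bijection coming from $I\dashv\mathcal{P}_{\rm fs}$ (the previous Lemma) and the second from $I^*\dashv I_*$ (\cite[Theorem 6.8]{Pitts2}). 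Since each bijection is natural in both variables, so is their composite, and this is exactly the data of the desired adjunction. Alternatively, one can describe the unit and counit of ${\mathcal{P}_{\rm fs}}^*\dashv\mathcal{P}_{\rm fs *}$ by whiskering the unit–counit pairs $(\eta,\varepsilon)$ of $I\dashv\mathcal{P}_{\rm fs}$ and $(\eta',\varepsilon')$ of $I^*\dashv I_*$: the unit is $1\xrightarrow{\eta'}I_*I^*\xrightarrow{\,I_*\eta I^*\,}I_*\mathcal{P}_{\rm fs}II^*=\mathcal{P}_{\rm fs *}{\mathcal{P}_{\rm fs}}^*$ and the counit is ${\mathcal{P}_{\rm fs}}^*\mathcal{P}_{\rm fs *}=II^*I_*\mathcal{P}_{\rm fs}\xrightarrow{\,I\varepsilon'\mathcal{P}_{\rm fs}\,}I\mathcal{P}_{\rm fs}\xrightarrow{\,\varepsilon\,}1$, and the two triangle identities for the composite follow formally from those of the two factors.

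I do not expect a genuine obstacle here: the argument is the standard fact that right adjoints compose, and its only inputs are the two constituent adjunctions already available. If one wants a concrete description, ${\mathcal{P}_{\rm fs}}^*F$ is the nominal set $I^*F$ produced by the inverse-image functor of \cite[Theorem 6.8]{Pitts2}, regarded as an object of ${\bf Rel}({\bf Nom})$; and a presheaf morphism $f:F\to\mathcal{P}_{\rm fs *}X$ is transported along the composite bijection above to the equivariant relation $R_{\hat f}=\{(k,x):x\in\hat f(k)\}\subseteq I^*F\times X$, where $\hat f:I^*F\to\mathcal{P}_{\rm fs}X$ is the equivariant map corresponding to $f$ under the sheaf adjunction and $R_{(-)}$ is the passage to underlying relations used in the proof of the previous Lemma. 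The only point to check by hand is that $R_{\hat f}$ is equivariant, which is immediate from the equivariance of $\hat f$, exactly as in the previous Lemma.
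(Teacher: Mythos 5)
Your proof is correct, but it takes a genuinely different route from the paper's. You factor $\mathcal{P}_{\rm fs *}=I_*\circ\mathcal{P}_{\rm fs}$ (which is indeed how the Remark preceding the theorem defines it), invoke the two constituent adjunctions $I\dashv\mathcal{P}_{\rm fs}$ (the preceding Lemma) and $I^*\dashv I_*$ (Pitts, Theorem 6.8), and conclude by the standard fact that right adjoints compose; your hom-set splicing and your whiskered unit and counit are both stated correctly. The paper instead builds ${\mathcal{P}_{\rm fs}}^*$ from scratch: it defines ${\mathcal{P}_{\rm fs}}^*F$ as the directed colimit $\overrightarrow{\rm lim}_{A\in\mathcal{P}_{\rm f}(\mathbb{D})}FA=\bigcup_A FA/\mathord\sim$ with the ${\rm Perm}(\mathbb{D})$-action from Pitts' Lemma 6.7, sends a natural transformation $\tau$ to the relation $\tau^*$, and then verifies by hand that $\eta_F$, mapping $x\in FB$ to $\{x/\mathord\sim\}$, is a natural universal arrow. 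The two constructions agree: the colimit is exactly Pitts' inverse-image functor $I^*F$ viewed as an object of ${\bf Rel}({\bf Nom})$ (and since $I$ is the identity on objects, your $I(I^*F)$ is the same nominal set), the relation $\tau^*$ is the underlying relation of the equivariant map $I^*\tau$, and the paper's $\overline{\iota}$ is your $R_{\hat f}$. What your argument buys is economy and conceptual transparency: there is nothing to verify beyond the two adjunctions already in hand, whereas the paper must separately check that $\tau^*$ is well-defined and equivariant, that ${\rm supp}\{x/\mathord\sim\}\subseteq B$, and that $\eta_F$ is natural and universal. What the paper's argument buys is an explicit, element-level description of the left adjoint and of the transposition $\iota\mapsto\overline\iota$, and it leans on Pitts only for the colimit formula rather than for the full adjunction $I^*\dashv I_*$. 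Both are valid proofs of the theorem.
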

\begin{proof}
	First we note that since ${ \mathcal{P}_{\rm f}(\mathbb{D})}$ is an up-directed set, the image of every functor $F$ is up-directed, for every presheaf $F\in {\bf Set}^{ \mathcal{P}_{\rm f}(\mathbb{D})}$. Now we consider the assignment
	\[
	\xymatrix@R=0.5pc {
		{\mathcal{P}_{\rm fs}}^*:{\bf Set}^{ \mathcal{P}_{\rm f}(\mathbb{D})}\ar[r]& \bf Rel(\bf Nom)\\
		F\ar@{~>}[r]\ar[dd]_\tau&\overrightarrow{{\rm lim}}_{A\in { \mathcal{P}_{\rm f}(\mathbb{D})}}FA\ar[dd]^{\tau^*} \\
		\qquad \ar@{~>}[r]&\quad\\
		G\ar@{~>}[r]&\overrightarrow{{\rm lim}}_{A\in { \mathcal{P}_{\rm f}(\mathbb{D})}}GA
	}
	\]
	
	\sloppypar in which $\overrightarrow{{\rm lim}}_{A\in { \mathcal{P}_{\rm f}(\mathbb{D})}}FA$ is direct limit (or directed colimit) of the diagram $\{FA\}_{A\in { \mathcal{P}_{\rm f}(\mathbb{D})}}$ which  is the quotient $\bigcup_{A\in { \mathcal{P}_{\rm f}(\mathbb{D})}}FA/\sim$, see \cite{tens}, and the relation $\tau^*$ is defined by
	$$(x/\sim, y/\sim)\in \tau^*\Leftrightarrow\  \tau_A(x)=y$$  in which $x$ is mapping to $x/\sim$ by the colimit injection.
	It is worth noting that $\overrightarrow{{\rm lim}}_{A\in { \mathcal{P}_{\rm f}(\mathbb{D})}}FA$ together with the action 
	$\cdot:~{\rm Perm} (\mathbb{D})\times~ \overrightarrow{{\rm lim}}_{A\in { \mathcal{P}_{\rm f}(\mathbb{D})}}FA\to ~\overrightarrow{{\rm lim}}_{A\in { \mathcal{P}_{\rm f}(\mathbb{D})}}FA$ mapping each $(\pi,x/\sim)$ to $F(\pi|_A)( x)/\sim$ is a nominal set, see \cite[lemma 6.7]{Pitts2}. Also  naturality of $\tau$ indicates that $\tau^*$ is well-defined and functoriality of $F$ implies that $\tau^*$ is equivariant. To prove that ${\mathcal{P}_{\rm fs}}^*$ is a left adjoint for ${\mathcal{P}_{\rm fs *}}$, we give the natural transformation $\eta_F:F\to{\mathcal{P}_{\rm fs *}}{\mathcal{P}_{\rm fs}}^*(F)$ to be ${\eta_F}_B:FB\to \mathcal{P}_{\rm fs *}\overrightarrow{\rm lim}_{A\in { \mathcal{P}_{\rm f}(\mathbb{D})}}FA(B)$, mapping each $x\in FB$ to $\{x/\sim\}$, in each level $B\in \mathcal{P}_{\rm f} (\mathbb{D})$, for every functor $F\in {\bf Set}^{ \mathcal{P}_{\rm f}(\mathbb{D})}$. Notice that, by the definition of action of $\overrightarrow{\rm lim}_{A\in { \mathcal{P}_{\rm f}(\mathbb{D})}}FA$, ${\rm supp} \{x/\sim\}\subseteq B$, for every $x\in FB$. Indeed, if $\pi|_B=id_B$, then $F(\pi|_B)=id_{FB}$ and hence $F(\pi|_B)(x)=x$. Also, for every $x\in FB$ and for the inclusion function $i:B\hookrightarrow C$ we have: 
	\begin{align*}
		{\mathcal{P}_{\rm fs *}}(i){\eta_F}_B(x)&=\overrightarrow{Fi}(\{x/\sim\})\\
		&=\{Fi(x)/\sim\}\\
		&={\eta_F}_C(Fi(x)).
	\end{align*}
	This indicates the naturality of $\eta_F$. Now we show that $\eta_F$ is a universal arrow, for every functor $F\in {\bf Set}^{ \mathcal{P}_{\rm f}(\mathbb{D})}$. To do so, let $\iota:F\to \mathcal{P}_{\rm fs *}Y$ be a natural transformation, for some $y\in \bf Rel(\bf Nom)$. Then there exists 
	$$\overline{\iota}:=\{(x/\sim, y)\ :\ \iota_A(x)=y\}$$
	in which $x\in FA$ maps to $x/\sim$ by the colimit injection. Naturality of $\iota$ implies that $\overline{\iota}$ is well-defined and functoriality of $F$ implies that $\overline{\iota}$ is equivariant. Also we have 
	\begin{align*}
		\mathcal{P}_{\rm fs *}(\overline{\iota})_A\circ {\eta_F}_B(x)&=\mathcal{P}_{\rm fs *}(\overline{\iota})_A(\{x/\sim\})\\
		&=\overrightarrow{\overline{\iota}}(\{x/\sim\})\\
		&=y\\
		&=\iota_A(x).
	\end{align*}
	One can easily check the uniqueness of $\overline{\iota}$ with $\mathcal{P}_{\rm fs *}(\overline{\iota})_A\circ {\eta_F}_B(x)=\iota_A(x)$.
\end{proof}  

The following example shows that the functor $\mathcal{P}_{\rm fs *}:{\bf Rel}({\bf Nom})\to {\bf Set}^{ \mathcal{P}_{\rm f}(\mathbb{D})} $ is not faithful.

\begin{example}\label{not faithful}
Suppose $R,R' \in \mathcal{R}(\mathbb{D}, \mathbb{D}^{(2)})$ with $R=\{(d,(d,d')): d\neq d' \}$ and $R'=\{(d',(d,d')): d\neq d' \}$. It is clear that $R,R'\in {\bf Rel}({\bf Nom})$ and  ${R_*}_A={R'_*}_A=\mathbb{D}^{(2)}$, for every $A\in\mathcal{P}_{\rm f}(\mathbb{D}) $, but $R\neq R'$. So the functor $\mathcal{P}_{\rm fs *}$ is not faithful.
\end{example}

\begin{remark}\label{topos}
	It is worth noting that $\emptyset$ is a zero object in {\bf Rel}({\bf Nom}), that is, both initial and terminal. Now, since the only toposes with a zero object are ones equivalent to the trivial, that is one-object-one-morphism, and the category {\bf Rel}({\bf Nom}) is patently not equivalent to that, {\bf Rel}({\bf Nom}) is not a topos. 
\end{remark}

 
 \section{Natural deterministic morphism in ${\rm\bf Rel}$({\bf Nom})}

In the category ${\rm\bf Rel}$({\bf Nom}) there are several types of morphisms, each with their advantages. Each of these types can be used in combination to gain a deeper understanding of the underlying  objects. This section is devoted to an important kind of these morphisms which is called natural deterministic morphism.

\sloppypar\noindent\begin{notation} 
Let $X$ be nominal sets, and $A\in\mathcal{P}_{\rm fs}(X)$.
 The notation  $\mathcal{R}_{\rm fs}(A,B)$, in this section, refers to the set of all finitely supported relations from $X$ to $A$.
\end{notation}

\begin{definition}\label{fs functor epsilon}
Let $X$ and $Y$ be two nominal sets. A pair $(R, \sigma)$ is called a \emph{natural deterministic morphism}
if $R\in \mathcal{R}_{_{\rm fs}}(X,Y)$ and $\sigma: \epsilon_Y \circ \overrightarrow{R}\to \epsilon_X$ is a {\emph natural transformation}, in which, for every nominal set $X$,  $\epsilon_X$ is a functor from a subcategory $\mathcal{T}(X)$ of $(\mathcal{P}_{_{\rm fs}}(X), \subseteq)$  to the category  $\bf{Nom}$ defined by the following diagram, for every $A,B\in\mathcal{T}(X)$.

$$\xymatrix{
	&A \ar@{^{(}->}[d]\ar@{~>}[r]&\epsilon_X(A)=\mathcal{R}_{\rm fs}(X,A )\ar@{^{(}->}[d]\\
	&B\ar@{~>}[r]&\epsilon_X(B)=\mathcal{R}_{\rm fs}(X,B)}
$$
\end{definition}

\begin{note}
In this paper, we  either assume $\mathcal{T}(X)=(\mathcal{P}_{_{\rm fs}}(X), \subseteq)$ and consider the functor  $\epsilon_X:~\mathcal{P}_{_{\rm fs}}(X)\to \bf{Nom}$, or suppose $\mathcal{T}(X)$ to be the set of all equivariant subsets of $X$, denoted by ${\rm Eqsub}(X)$. In the latest case we denote $\epsilon_X$ by $\epsilon^{eq}_{_{X}}$ for emphasis.
\end{note}

\begin{remark}
 (i) By Proposition \ref{R_* and R^*}, every finitely supported relation $R:X\longrightarrow Y$ implies two functors $$\overrightarrow{R}:(\mathcal{P}_{_{\rm fs}}(X), \subseteq)\longrightarrow (\mathcal{P}_{_{\rm fs}}(Y), \subseteq), \ \ \
\overleftarrow{R}:(\mathcal{P}_{_{\rm fs}}(Y), \subseteq)\longrightarrow (\mathcal{P}_{_{\rm fs}}(X), \subseteq).$$
\medskip

(ii) For every $S\in {\rm Eqsub}(X)$ and $\rho \in \mathcal{R}_{\rm fs}(X, S)$, since ${\rm supp}\,(X, S)={\rm supp}\,X\cup {\rm supp}\,S $ and ${\rm supp}\,X={\rm supp}\,S=\emptyset$, empty set supports $\rho$ and we have $\pi \rho=\rho$, for every $\pi \in {\rm Perm}(\mathbb{D})$. 
\end{remark}

\begin{proposition}\label{prop define deterministic}
Each equivariant $R\in \mathcal{R}(X, Y)$ determines a  natural deterministic morphism.
\end{proposition}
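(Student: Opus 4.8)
The plan is, given an equivariant relation $R\in\mathcal R(X,Y)$, to produce an explicit natural transformation $\sigma\colon \epsilon_Y\circ\overrightarrow R\Rightarrow\epsilon_X$ so that $(R,\sigma)$ meets Definition~\ref{fs functor epsilon}. First I would check that $\epsilon_Y\circ\overrightarrow R$ is a well-defined functor $\mathcal T(X)\to\mathbf{Nom}$. Since $R$ is equivariant we have ${\rm supp}\,R=\emptyset$, so Proposition~\ref{comper}(i) (resp.\ Corollary~\ref{n}(i) when $\mathcal T={\rm Eqsub}$) shows that $\overrightarrow R$ carries finitely supported (resp.\ equivariant) subsets of $X$ to ones of $Y$; as $\overrightarrow R$ is monotone it restricts to a functor $\mathcal T(X)\to\mathcal T(Y)$, and composing with $\epsilon_Y$ gives a functor whose value at $A$ is $\mathcal R_{\rm fs}(Y,\overrightarrow R(A))$.

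For the components of $\sigma$ I would take $\sigma_A\colon\mathcal R_{\rm fs}(Y,\overrightarrow R(A))\to\mathcal R_{\rm fs}(X,A)$ obtained by pulling a relation back along $R$, namely $\sigma_A(\rho)=R_A^{-1}\circ\rho\circ R$, where $R_A=R\cap(A\times\overrightarrow R(A))$ is the restriction of $R$ to $A$ (so $R_A\colon A\to\overrightarrow R(A)$ is onto and $R_A^{-1}\colon\overrightarrow R(A)\to A$). Two of the three verifications are routine. First, $\sigma_A(\rho)$ really is a finitely supported relation from $X$ to $A$: its image lies in $A$ by construction, and since relational composition is equivariant one has ${\rm supp}(S\circ T)\subseteq{\rm supp}\,S\cup{\rm supp}\,T$, whence ${\rm supp}\,\sigma_A(\rho)\subseteq{\rm supp}\,R\cup{\rm supp}\,\rho={\rm supp}\,\rho$, which is finite. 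Second, each $\sigma_A$ is an equivariant map: relational composition is compatible with the ${\rm Perm}(\mathbb D)$-action and $\pi\cdot R=R$, $\pi\cdot R_A=R_A$, so (using Theorem~\ref{in} so that the action used is unambiguous) $\pi\cdot\sigma_A(\rho)=R_A^{-1}\circ(\pi\cdot\rho)\circ R=\sigma_A(\pi\cdot\rho)$.

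The hard part I expect to be the naturality of $\sigma$. For $A\subseteq B$ in $\mathcal T(X)$ one must show that $\sigma_A$ and $\sigma_B$ are intertwined by the inclusions $\mathcal R_{\rm fs}(Y,\overrightarrow R(A))\hookrightarrow\mathcal R_{\rm fs}(Y,\overrightarrow R(B))$ and $\mathcal R_{\rm fs}(X,A)\hookrightarrow\mathcal R_{\rm fs}(X,B)$, i.e.\ that $\sigma_B(\rho)=\sigma_A(\rho)$, as subsets of $X\times B$, whenever ${\rm Im}\,\rho\subseteq\overrightarrow R(A)$. One inclusion, $\sigma_A(\rho)\subseteq\sigma_B(\rho)$, is immediate from $R_A\subseteq R_B$; the content is the reverse one, which amounts to checking that a pair $(x,b)\in R_B^{-1}\circ\rho\circ R$ is forced to have $b\in A$, given that the image of $\rho$ already lies inside $\overrightarrow R(A)$. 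This is exactly the step where the precise way the equivariant relation $R$ interacts with the chain $A\subseteq B$ and with $\overrightarrow R$ must be used, and where the hypothesis that $R$ is equivariant (hence supported by $\emptyset$) is genuinely needed; if this compatibility does not hold on the full poset one falls back to the equivariant variant $\epsilon^{eq}$, or refines $\sigma$ accordingly. Once naturality is in place, $(R,\sigma)$ is a natural deterministic morphism by construction.
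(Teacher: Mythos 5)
Your construction coincides, up to notation, with the one the paper uses: the paper's proof of Proposition~\ref{prop define deterministic} defines $(\sigma_R)_{_S}(\rho)$ element-wise as the set of pairs $(x,s)$ for which there exist $y_1\in Y$ and $y_2\in\overrightarrow{R}(S)$ with $(x,y_1)\in R$, $(s,y_2)\in R$ and $(y_1,y_2)\in\rho$, which is exactly your $R_S^{-1}\circ\rho\circ R$ with $R_S=R\cap(S\times\overrightarrow{R}(S))$. Your routine checks (image contained in $A$, finiteness of support via subadditivity of support under relational composition, compatibility with the action) are consistent with what the paper records separately in Proposition~\ref{mohemm} and Note~\ref{exmple deter morphism}, though note that $\pi\cdot R_A=R\cap(\pi A\times\overrightarrow{R}(\pi A))$, so $\sigma_A$ is only finitely supported (by ${\rm supp}\,A$) rather than equivariant unless $A$ is an equivariant subset.

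The genuine gap is that you never prove naturality: you announce it as the hard part and close with a conditional (``if this compatibility does not hold \dots one falls back to $\epsilon^{eq}$, or refines $\sigma$''), which is not an argument. Moreover, the obstruction you are circling is concrete and neither of your escape hatches removes it. For $A\subseteq B$ and $\rho$ with image inside $\overrightarrow{R}(A)$, a pair $(x,s)\in\sigma_B(\rho)$ only requires $s\in B$ and $(s,y_2)\in R$ for some $y_2\in\overrightarrow{R}(A)$; nothing forces $s\in A$. Taking $R=X\times Y$ (which is equivariant) and any $s\in B\setminus A$ shows $\sigma_B(\rho)$ can strictly contain $\sigma_A(\rho)$, and the same phenomenon occurs when $A\subseteq B$ are both equivariant subsets, so restricting $\mathcal{T}(X)$ to ${\rm Eqsub}(X)$ does not by itself help; equivariance of $R$, which you claim is ``genuinely needed'' at this step, plays no role in the relevant inclusion. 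To be fair, the paper's own proof dispatches this with ``the naturality of $\sigma_R$ is obtained easily'' and offers no more detail than you do; but a complete proof must either verify the naturality square for the intended subcategory $\mathcal{T}(X)$ of Definition~\ref{fs functor epsilon} or adjust the components, and as written your proposal leaves the only substantive step of the proposition open.
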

\begin{proof}
Define the natural transformation $({(\sigma_{R})}_{_S})_{S\in \mathcal{P}_{_{\rm fs}}(X)}$, in which ${(\sigma_{R})}_{_S}: \epsilon_Y(\overrightarrow{R}(S))\to \epsilon_X(S)$ assigns every $\rho\in \mathcal{R}_{\rm fs}(Y,\overrightarrow{R}(S))$ to ${(\sigma_{R})}_{_S}(\rho)$, defined by
 $$(x,s)\in {(\sigma_{R})}_{_S}(\rho)\Leftrightarrow\text{ there exist } y_1\in Y, y_2\in \overrightarrow{R}(S)\text{ such that }\vcenter{\xymatrix @-1.5pc @ur { x\ar[d]_{R}&s\ar[d]^{R}\\
  y_1\ar[r]_{\rho} &y_2.}}
$$
It is clear ${(\sigma_{R})}_{_S}$'s are maps.  The naturality of $\sigma_R$ is obtained easily.
\end{proof}

\begin{proposition}
Let $X$ be a nominal set and $R\in \mathcal{R}(X, X)$ be an equivariant injective relation. Then, ${(\sigma_{R})}_{_X}(\sharp_{_X})\subseteq \sharp_{_X}$.
\end{proposition}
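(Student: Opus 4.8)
The plan is to unwind the definition of the $X$-component of $\sigma_R$ supplied by Proposition \ref{prop define deterministic} and then to read off the conclusion from the support estimate for equivariant injective relations in Corollary \ref{yhm}(ii).

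First I would fix a pair $(x,s)\in {(\sigma_{R})}_{_X}(\sharp_{_X})$, where $\sharp_{_X}$ is fed to ${(\sigma_{R})}_{_X}$ as an element of $\epsilon_X(\overrightarrow{R}(X))=\mathcal{R}_{\rm fs}(X,\overrightarrow{R}(X))$ --- this makes sense because $\sharp_{_X}$ is equivariant (Example \ref{suprel}(iv)) and $\overrightarrow{R}(X)={\rm Im}\,R$ is an equivariant subset of $X$ (Corollary \ref{n}(i)), so after restricting its codomain $\sharp_{_X}$ is a finitely supported relation into $\overrightarrow{R}(X)$. By the defining formula of ${(\sigma_{R})}_{_X}$ there are then $y_1\in X$ and $y_2\in\overrightarrow{R}(X)$ with $(x,y_1)\in R$, $(s,y_2)\in R$ and $(y_1,y_2)\in\sharp_{_X}$, i.e. ${\rm supp}\,y_1\cap{\rm supp}\,y_2=\emptyset$.

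Next, since $R$ is an equivariant injective relation and $(x,y_1)\in R$, Corollary \ref{yhm}(ii) gives ${\rm supp}\,x\subseteq{\rm supp}\,y_1$; applying the same corollary to $(s,y_2)\in R$ gives ${\rm supp}\,s\subseteq{\rm supp}\,y_2$. Hence ${\rm supp}\,x\cap{\rm supp}\,s\subseteq{\rm supp}\,y_1\cap{\rm supp}\,y_2=\emptyset$, so $(x,s)\in\sharp_{_X}$, which is the asserted containment ${(\sigma_{R})}_{_X}(\sharp_{_X})\subseteq\sharp_{_X}$.

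There is essentially no genuine obstacle here: once the definition of ${(\sigma_{R})}_{_X}$ is spelled out, the argument is just a twofold application of Corollary \ref{yhm}(ii). The only points needing a moment's care are purely bookkeeping --- that all the relations occurring (namely $R$ and $\sharp_{_X}$, the latter possibly restricted to $X\times\overrightarrow{R}(X)$) are finitely supported, so that the functors $\epsilon_X$ are being evaluated on legitimate arguments, which is immediate from equivariance together with Remark \ref{popol}(iii); and that the injectivity of $R$ is genuinely used, since it is exactly what makes Corollary \ref{yhm}(ii) available.
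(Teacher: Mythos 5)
Your proposal is correct and follows essentially the same route as the paper's own proof: unwind the defining diagram of ${(\sigma_{R})}_{_X}$ from Proposition \ref{prop define deterministic} and apply Corollary \ref{yhm}(ii) twice to get ${\rm supp}\,x\subseteq{\rm supp}\,y_1$ and ${\rm supp}\,s\subseteq{\rm supp}\,y_2$, whence the supports of $x$ and $s$ are disjoint. The extra bookkeeping you include about $\sharp_{_X}$ being a legitimate (finitely supported) argument of $\epsilon_X(\overrightarrow{R}(X))$ is a point the paper leaves implicit, but it does not change the argument.
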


\begin{proof}
Applying Proposition \ref{prop define deterministic} we have:
$$(x,y)\in {(\sigma_{R})}_{_X}(\sharp_{_X})\Leftrightarrow\text{ there exist } y_{1}\in X, y_{2}\in \overrightarrow{R}(X)\text{ such that }\vcenter{\xymatrix @-1.5pc @ur { x\ar[d]_{{R}}&y\ar[d]^{{R}}\\
  y_{1}\ar[r]_{\sharp_{_X}} & y_{2}.}}
$$
Since $R$ is injective, by Corollary \ref{yhm}(ii), ${\rm supp}\, x\subseteq {\rm supp}\,y_1$ and  ${\rm supp}\, y\subseteq {\rm supp}\,y_2$. Now, since ${\rm supp}\,x \cap {\rm supp}\,y\subseteq {\rm supp}\,y_1\cap {\rm supp}\,y_2$ and $(y_1, y_2)\in \sharp_{_X}$, we get $(x,y)\in \sharp_{_X}$.
\end{proof}

\begin{theorem}\label{preserve}
Suppose $X,Y$ are nominal sets and  $R\in \mathcal{R}(X, Y)$ is equivariant. 

{\rm (i)} Then ${(\sigma_{R})}_{_S}$ is order-preserving, for every $S\in \mathcal{P}_{\rm fs}(X)$.

\medskip

{\rm (ii)} If $R,T\in \mathcal{R}(X,Y) $ are equivariant relations and $R\subseteq T$, then ${(\sigma_{R})}_{_S}(\rho)\subseteq{(\sigma_{T})}_{_S}(\rho)$, for every $S\in \mathcal{P}_{\rm fs}(X)$.
\end{theorem}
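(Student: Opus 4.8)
The plan is to unwind the explicit description of ${(\sigma_{R})}_{_S}(\rho)$ given in Proposition \ref{prop define deterministic} and observe that, in both items, a pair $(x,s)$ belongs to the relation in question exactly when a ``certificate'' exists: witnesses $y_1\in Y$ and $y_2\in\overrightarrow{R}(S)$ with $(x,y_1)\in R$, $(s,y_2)\in R$ and $(y_1,y_2)\in\rho$. Enlarging $\rho$ (item (i)) or $R$ (item (ii)) can only create more such certificates, never destroy one, so both statements reduce to a direct witness-chase; no further machinery is needed.

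For (i), I would fix $S\in\mathcal{P}_{\rm fs}(X)$ and take $\rho\subseteq\rho'$ in the poset $\epsilon_Y(\overrightarrow{R}(S))=\mathcal{R}_{\rm fs}(Y,\overrightarrow{R}(S))$. Given $(x,s)\in{(\sigma_{R})}_{_S}(\rho)$, choose witnesses $y_1\in Y$, $y_2\in\overrightarrow{R}(S)$ as above; since $(y_1,y_2)\in\rho\subseteq\rho'$, the same pair $(y_1,y_2)$ certifies $(x,s)\in{(\sigma_{R})}_{_S}(\rho')$. Hence ${(\sigma_{R})}_{_S}(\rho)\subseteq{(\sigma_{R})}_{_S}(\rho')$, i.e. ${(\sigma_{R})}_{_S}$ is order-preserving.

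For (ii), assume $R\subseteq T$. I would first record the routine point that $\overrightarrow{R}$ is monotone in its relation argument, so $\overrightarrow{R}(S)\subseteq\overrightarrow{T}(S)$ and therefore $\mathcal{R}_{\rm fs}(Y,\overrightarrow{R}(S))\subseteq\mathcal{R}_{\rm fs}(Y,\overrightarrow{T}(S))$; in particular ${(\sigma_{T})}_{_S}(\rho)$ is defined whenever $\rho\in\epsilon_Y(\overrightarrow{R}(S))$, so the inclusion in the statement makes sense. Then, given $(x,s)\in{(\sigma_{R})}_{_S}(\rho)$ with witnesses $y_1\in Y$ and $y_2\in\overrightarrow{R}(S)$, note that $y_2\in\overrightarrow{T}(S)$ and $(x,y_1),(s,y_2)\in R\subseteq T$, while $(y_1,y_2)\in\rho$ is unchanged; hence $(x,s)\in{(\sigma_{T})}_{_S}(\rho)$, which gives ${(\sigma_{R})}_{_S}(\rho)\subseteq{(\sigma_{T})}_{_S}(\rho)$.

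I do not anticipate any genuine obstacle: the whole argument is a diagram-chase through the definition in Proposition \ref{prop define deterministic}. The only steps deserving a word rather than nothing are the monotonicity of $\overrightarrow{R}$ in $R$ (so that the two naturality components compared in item (ii) have compatible domains) and the observation that a finitely supported relation into $\overrightarrow{R}(S)$ remains finitely supported when regarded as a relation into the larger nominal set $\overrightarrow{T}(S)$ — both immediate from the definitions and from Lemma \ref{Xfs is nom} together with Remark \ref{popol}.
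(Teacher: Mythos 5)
Your proposal is correct and follows essentially the same witness-chasing argument as the paper: enlarging $\rho$ in (i) or $R$ in (ii) preserves the existence of the witnesses $y_1\in Y$ and $y_2\in\overrightarrow{R}(S)$ required by the definition in Proposition \ref{prop define deterministic}. The only difference is that you explicitly justify that $\overrightarrow{R}(S)\subseteq\overrightarrow{T}(S)$ so that the two sides of the inclusion in (ii) are comparable, a point the paper passes over silently; this is a welcome clarification but not a different method.
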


\begin{proof}
{\rm (i)} Suppose $\rho _{1},\rho _{2}\in  \mathcal{R}(Y, \overrightarrow{R}(X))$ where $\rho _{1}\subseteq \rho _{2}$. Let $(x,y)\in{(\sigma_{R})}_{_S}(\rho_{1} )$. So we have:
\begin{align*}
(x,y)\in {(\sigma_{R})}_{_S}(\rho _{1} )&\Leftrightarrow\text{ there exist } y_{1}\in Y, y_{2}\in \overrightarrow{R}(S)\text{ such that }\vcenter{\xymatrix @-1.5pc @ur { x\ar[d]_{R}&y\ar[d]^{R}\\
  y_{1}\ar[r]_{\rho _{1}} & y_{2}}}\\
&\Rightarrow \text{ there exist }  y_{1}\in Y, y_{2}\in \overrightarrow{R}(S)\text{ such that }\vcenter{\xymatrix @-1.5pc @ur { x\ar[d]_{R}&y\ar[d]^{R}\\
  y_{1}\ar[r]_{\rho _{2}} & y_{2}}}.\\ \tag*{\qedhere}
\end{align*}

{\rm (ii)} For every $S\in \mathcal{P}_{\rm fs}(X)$ and $\rho\in  \epsilon_Y(\overrightarrow{R}(S))$ we have:
 \begin{align*}
(x,s)\in {(\sigma_{R})}_{_S}(\rho)&\Leftrightarrow\text{ there exist } y_{1}\in Y, y_{2}\in \overrightarrow{R}(S)\text{ such that }\vcenter{\xymatrix @-1.5pc @ur { x\ar[d]_{R}&s\ar[d]^{R}\\
  y_{1}\ar[r]_{\rho} & y_2}}\\
  &\Rightarrow\text{ there exist } y_1\in Y, y_2\in \overrightarrow{R}(T)\text{ such that }\vcenter{\xymatrix @-1.5pc @ur { x\ar[d]_{T}&s\ar[d]^{ T}\\
  y_1\ar[r]_{\rho} & y_2}}\\
  &\Leftrightarrow (x,s)\in {(\sigma_{T})}_{_S}(\rho).\tag*{\qedhere}
\end{align*}
 \end{proof}
 
 \medskip
 
\subsection{The properties of natural deterministic morphism}

\begin{proposition}\label{mohemm}
Suppose $X$ and $Y$ are  nominal sets and $\rho\in \mathcal{R}_{\rm fs}(X, Y)$. If $S\in \mathcal{P}_{_{\rm fs}}(X)$ and $\overrightarrow{R}(S)\neq \emptyset$, then

{\rm(i)}  ${\rm supp}\,{(\sigma_{R})}_{_S}(\rho) \subseteq {\rm supp}\,{(\sigma_{R})}_{S} \cup {\rm supp}\,\rho$.

{\rm(ii)}  ${\rm supp}\,{(\sigma_{R})}_{_S}\subseteq {\rm supp}\,R \cup {\rm supp}\,S$.
\end{proposition}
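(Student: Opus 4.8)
The plan is to treat both parts as consequences of the general support-propagation results already established for direct images, together with the explicit description of $(\sigma_R)_{_S}(\rho)$ given in Proposition \ref{prop define deterministic}. The key observation is that $(\sigma_R)_{_S}(\rho)$ is obtained from the data $R$, $S$, and $\rho$ by a finite, equivariant construction: a pair $(x,s)$ lies in $(\sigma_R)_{_S}(\rho)$ precisely when there are witnesses $y_1\in\overrightarrow{R}(x)$ and $y_2\in\overrightarrow{R}(S)\cap\overrightarrow{R}(s)$ with $(y_1,y_2)\in\rho$. Since every ingredient is finitely supported (here $R\in\mathcal{R}_{\rm fs}(X,Y)$ and $\rho\in\mathcal{R}_{\rm fs}(X,Y)$, and $S\in\mathcal{P}_{\rm fs}(X)$), one expects the support of the output to be contained in the union of the supports of the inputs, and the two claimed inclusions are the two natural ways of grouping those inputs.

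For part (i), I would argue by the transposition criterion of Remark \ref{supp for G}(i): let $d_1,d_2\notin {\rm supp}\,(\sigma_R)_{_S}\cup{\rm supp}\,\rho$ and show $(d_1\ d_2)\cdot(\sigma_R)_{_S}(\rho)=(\sigma_R)_{_S}(\rho)$. Because $(d_1\ d_2)$ fixes $\rho$ and, by naturality/equivariance of the family $\sigma_R$ (and the fact that $(d_1\ d_2)$ fixes $(\sigma_R)_{_S}$, hence acts compatibly on the component at $S$ and at $(d_1\ d_2)S=S$ after adjusting), we get $(d_1\ d_2)\cdot(\sigma_R)_{_S}(\rho)=(\sigma_R)_{_{(d_1\ d_2)S}}((d_1\ d_2)\cdot\rho)=(\sigma_R)_{_S}(\rho)$; here one also uses $(d_1\ d_2)\overrightarrow{R}(S)=\overrightarrow{R}((d_1\ d_2)S)$ from Remark \ref{popol}(i) or Proposition \ref{R_* and R^*}. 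This is exactly the pattern of the proof of Lemma \ref{suppor of fs map} and of Proposition \ref{comper}, applied to the map $\rho\mapsto(\sigma_R)_{_S}(\rho)$ rather than to a direct image. For part (ii), I would instead use Lemma \ref{suppor of fs map} more directly: the assignment $S\mapsto(\sigma_R)_{_S}$ (with $R$ fixed) is a finitely supported map whose support is contained in ${\rm supp}\,R$, so ${\rm supp}\,(\sigma_R)_{_S}\subseteq{\rm supp}\,R\cup{\rm supp}\,S$ follows from the "$X'=\{x\}$" clause of that lemma with $x=S$. Alternatively, one verifies directly with transpositions $d_1,d_2\notin{\rm supp}\,R\cup{\rm supp}\,S$ that $(d_1\ d_2)\cdot(\sigma_R)_{_S}=(\sigma_R)_{_{(d_1\ d_2)S}}=(\sigma_R)_{_S}$, using $(d_1\ d_2)\cdot R=R$ and $(d_1\ d_2)S=S$ together with the explicit defining equivalence of $(\sigma_R)_{_S}(\rho)$.

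The routine part is checking that the defining condition for $(x,s)\in(\sigma_R)_{_S}(\rho)$ is stable under simultaneously transposing $d_1,d_2$ everywhere (in $x$, $s$, the witnesses $y_1,y_2$, and in $R,\rho,S$), which is immediate from the equivariance of $R$, the hypothesis that $d_1,d_2$ are fresh for the relevant supports, and Remark \ref{popol}(i). The main obstacle I anticipate is bookkeeping the interaction between the two "slots" of $\sigma_R$: $(\sigma_R)_{_S}$ is itself a component of a natural transformation indexed by $S$, so when we move atoms we must track how the transposition acts on the index $S$, on $\overrightarrow{R}(S)$, and on the relation $\epsilon_Y(\overrightarrow{R}(S))=\mathcal{R}_{\rm fs}(Y,\overrightarrow{R}(S))$ simultaneously; once it is observed that a transposition fixing ${\rm supp}\,R\cup{\rm supp}\,S$ fixes $S$ and hence $\overrightarrow{R}(S)$ (and therefore permutes $\mathcal{R}_{\rm fs}(Y,\overrightarrow{R}(S))$ within itself), everything collapses to the familiar computation. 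I would present part (ii) first, then derive part (i) from part (ii) by the same freshness argument applied to the pair $({\rm supp}\,(\sigma_R)_{_S},{\rm supp}\,\rho)$, noting that ${\rm supp}\,(\sigma_R)_{_S}\cup{\rm supp}\,\rho\subseteq{\rm supp}\,R\cup{\rm supp}\,S\cup{\rm supp}\,\rho$ then gives a cruder but sometimes more convenient bound as a corollary.
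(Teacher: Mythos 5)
Your proposal is correct and follows essentially the same route as the paper: part (i) is exactly an application of Lemma \ref{suppor of fs map} to the finitely supported map $(\sigma_R)_{_S}$ (you unfold that lemma's transposition argument where the paper simply cites it), and your ``alternative'' direct verification for part (ii) --- checking that a transposition $(d_1\ d_2)$ avoiding ${\rm supp}\,R\cup{\rm supp}\,S$ satisfies $(d_1\ d_2)\cdot(\sigma_R)_{_S}(\rho)=(\sigma_R)_{_S}((d_1\ d_2)\rho)$ using $(d_1\ d_2)S=S$ and $(d_1\ d_2)\overrightarrow{R}(x)=\overrightarrow{R}((d_1\ d_2)x)$ --- is precisely the paper's computation. (Your first suggestion for (ii), invoking Lemma \ref{suppor of fs map} on the assignment $S\mapsto(\sigma_R)_{_S}$, presupposes that this assignment is supported by ${\rm supp}\,R$, which is essentially what must be checked, so the direct verification is the one to keep.)
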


\begin{proof}
{\rm(i)} Since ${(\sigma_{R})}_{_S}$'s are maps, by Lemma \ref{suppor of fs map}, we have ${\rm supp}\,{(\sigma_{R})}_{_S}(\rho) \subseteq {\rm supp}\,{(\sigma_{R})}_{_S} \cup {\rm supp}\,\rho$. 

{\rm(ii)} Let $d_1, d_2\notin {\rm supp}\,R \cup {\rm supp}\,S$. Then, $(d_1\ d_2) S=S$ and $\overrightarrow{R}((d_1\ d_2)x)=(d_1\ d_2) \overrightarrow{R}(x)$. We show that $(d_1\ d_2) {(\sigma_{R})}_{_S} (\rho)={(\sigma_{R})}_{S}((d_1\ d_2)\rho)$. To do so, let $(x, s)\in {(\sigma_{R})}_{_S} ((d_1\ d_2)\rho)$. Then, there exist $y_1\in Y$, $y_2\in \overrightarrow{R}(S)$ with $(x, y_1), (s, y_2)\in R$ and $(y_1, y_2)\in (d_1\ d_2) \rho$. So, $((d_1\ d_2) y_1, (d_1\ d_2) y_2)\in \rho$. Since $(d_1\ d_2) S=S$ and $\overrightarrow{R}((d_1\ d_2)x)=(d_1\ d_2) \overrightarrow{R}(x)$, we have $(d_1\ d_2)s\in S$ and $((d_1\ d_2) x, (d_1\ d_2)y_1)\in R$. Thus, $((d_1\ d_2) x, (d_1\ d_2)s)\in {(\sigma_{R})}_{_S}(\rho)$ and so $(x, s)\in (d_1\ d_2) {(\sigma_{R})}_{_S}(\rho)$. The other side is proved similarly.
\end{proof}

\begin{note}\label{exmple deter morphism}
	Suppose $X,Y$ are nominal sets. Then

	{\rm(i)} if $R\in \mathcal{R}_{\rm fs}(X, Y)$, then  ${(\sigma_{R})}_{_S}$'s are finitely supported.
	\medskip
	
	{\rm(ii)} if $R\in \mathcal{R}(X, X)$ is equivariant, then ${(\sigma_{R})}_{_X}$ and ${(\sigma_{R})}_{_X}(\Delta_{X} )$ are equivariant too.
	\end{note}

\begin{remark}\label{oploko}
  Suppose $X$ is a nominal set and $R\in \mathcal{R}(X, X)$ is equivariant. Then 
  
  {\rm(i)} ${\rm Dom} {(\sigma_{R})}_{_X}(R )\subseteq{\rm Dom}R$.
  
  {\rm (ii)} ${\rm Im} {(\sigma_{R})}_{_X}(R)\subseteq{\rm Dom}R$.
 \end{remark}

 \begin{proof}
{\rm(i)} Suppose $x\in {\rm Dom} {(\sigma_{R})}_{_X}(R )$, so there exists $y\in X$, such that $(x,y)\in {(\sigma_{R})}_{_X}(R )$. Then we have:
 $$(x,y)\in {(\sigma_{R})}_{_X}(R )\Leftrightarrow\text{ there exist } y_{1}\in Y, y_{2}\in \overrightarrow{R}(X)\text{ such that }\vcenter{\xymatrix @-1.5pc @ur { x\ar[d]_{R}&y\ar[d]^{R}\\
  y_{1}\ar[r]_{R} & y_{2}}}.$$
  Therefore $x\in {\rm Dom}R$ and ${\rm Dom} {(\sigma_{R})}_{_X}(R)\subseteq{\rm Dom}R$.
  
  {\rm (ii)} Suppose $y\in {\rm Im} {(\sigma_{R})}_{_X}(R )$, so there exists $x\in X$, such that $(x,y)\in {(\sigma_{R})}_{_X}(R )$. Then we have:
 $$(x,y)\in {(\sigma_{R})}_{_X}(R )\Leftrightarrow\text{ there exist } y_{1}\in Y, y_{2}\in \overrightarrow{R}(X) \text{ such that }\vcenter{\xymatrix @-1.5pc @ur { x\ar[d]_{R}&y\ar[d]^{R}\\
  y_{1}\ar[r]_{R} & y_{2}}}.$$
  Therefore $y\in {\rm Dom}R$ and ${\rm Im} {(\sigma_{R})}_{_X}(R )\subseteq{\rm Dom}R$.
 \end{proof}

\begin{proposition}\label{iiin} 
 Suppose $X$ is a nominal set and $R\in \mathcal{R}(X, X)$ is  equivariant. If  $R$ is  injective, then
 
 {\rm (i)} ${(\sigma_{R})}_{_X}(R)\subseteq R$.
 
 {\rm (ii)} the relation ${(\sigma_{R})}_{_X}(R)$ is  injective.

 \end{proposition}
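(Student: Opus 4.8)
The plan is to unwind the definition of $(\sigma_R)_{_X}(R)$ supplied in Proposition \ref{prop define deterministic} and then exploit the injectivity of $R$ directly; no support-theoretic machinery is needed here.

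For (i), I would take an arbitrary pair $(x,y)\in {(\sigma_R)}_{_X}(R)$. Applying the defining square of Proposition \ref{prop define deterministic} with $S=X$, $\rho=R$ and $Y=X$, there exist $y_1\in X$ and $y_2\in \overrightarrow{R}(X)$ with $(x,y_1)\in R$, $(y,y_2)\in R$ and $(y_1,y_2)\in R$. The key observation is that the two memberships $(y_1,y_2)\in R$ and $(y,y_2)\in R$ share the second coordinate $y_2$, so injectivity of $R$ (Definition \ref{zzzz}(i)) forces $y_1=y$. Substituting back into $(x,y_1)\in R$ yields $(x,y)\in R$, which is exactly what is needed to conclude ${(\sigma_R)}_{_X}(R)\subseteq R$.

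For (ii), I would derive injectivity of ${(\sigma_R)}_{_X}(R)$ from part (i) together with the elementary fact that a subrelation of an injective relation is injective: if $(x,y)\in {(\sigma_R)}_{_X}(R)$ and $(x',y)\in {(\sigma_R)}_{_X}(R)$, then by (i) both pairs lie in $R$, and injectivity of $R$ gives $x=x'$.

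I do not expect any genuine obstacle; the only place that needs care is keeping track of which coordinate of which copy of $R$ is being compared when invoking the definition of injectivity, since three instances of $R$ occur in the diagram. It is also worth recording the degenerate case $R=\emptyset$, where $\overrightarrow{R}(X)=\emptyset$ forces ${(\sigma_R)}_{_X}(R)=\emptyset$ and both assertions hold trivially.
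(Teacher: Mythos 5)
Your argument for (i) is exactly the paper's: unwind the defining square with $\rho=R$, use injectivity of $R$ on the two pairs sharing the second coordinate $y_2$ to force $y_1=y$, and conclude $(x,y)\in R$; and (ii) follows from (i) just as in the paper, since a subrelation of an injective relation is injective. The proposal is correct and essentially identical to the paper's proof, with the added (harmless) remark about the degenerate case $R=\emptyset$.
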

 
 \begin{proof}
 
 {\rm (i)} Suppose $R$ is  injective and $(x,y)\in {(\sigma_{R})}_{_X}(R)$. So we have: 
$$(x,y)\in {(\sigma_{R})}_{_X}(R )\Leftrightarrow\text{ there exist } x_{1}\in X, x_{2}\in \overrightarrow{R}(X)\text{ such that }\vcenter{\xymatrix @-1.5pc @ur { x\ar[d]_{R}&y\ar[d]^{R}\\
  x_{1}\ar[r]_{R} & x_{2}.}}
$$
Since $R$ is injective, so $y=x_{1}$. Then $(x,y)\in R$ and 
${(\sigma_{R})}_{_X}(R)\subseteq R$.

{\rm (ii)} It follows from part (i).
 \end{proof}

 \begin{corollary}\label{uniform}
 Suppose $X$ is a nominal set  and $R\in \mathcal{R}(X, X)$  is an equivariant injective relation. If $(x,y)\in{(\sigma_{R})}_{_X}(R)$, then  ${\rm supp}\,x\subseteq{\rm supp}\,y$.

 \end{corollary}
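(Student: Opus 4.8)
The plan is to chain together the two structural results about the operator $(\sigma_R)_{_X}$ and about equivariant injective relations that have already been established. The statement asserts an inclusion of supports for any pair $(x,y)$ lying in the relation $(\sigma_R)_{_X}(R)$, so the natural move is first to locate such a pair inside a relation where we already control supports.

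First I would invoke Proposition \ref{iiin}(i), which says that when $R\in\mathcal{R}(X,X)$ is equivariant and injective one has $(\sigma_R)_{_X}(R)\subseteq R$. Hence from $(x,y)\in(\sigma_R)_{_X}(R)$ we immediately obtain $(x,y)\in R$. Next I would apply Corollary \ref{yhm}(ii): since $R$ is an equivariant injective relation and $(x,y)\in R$, it yields $\mathrm{supp}\,x\subseteq\mathrm{supp}\,y$, which is exactly the desired conclusion. (Alternatively, one could bypass Corollary \ref{yhm} and argue directly from Lemma \ref{rrrr}(ii) together with the fact that ${\rm supp}\,R=\emptyset$ for an equivariant relation, but routing through Corollary \ref{yhm}(ii) is cleanest.)

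There is essentially no obstacle here: the corollary is a direct consequence of Proposition \ref{iiin}(i) followed by Corollary \ref{yhm}(ii), and no new computation is required. The only thing worth a sentence of care is making explicit that the hypotheses of Proposition \ref{iiin} and of Corollary \ref{yhm}(ii) are met — namely that $R$ is equivariant and injective, which are precisely the standing assumptions of the corollary — so that both cited results apply verbatim to the pair $(x,y)$.

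\begin{proof}
Since $R\in\mathcal{R}(X,X)$ is an equivariant injective relation, Proposition \ref{iiin}(i) gives ${(\sigma_{R})}_{_X}(R)\subseteq R$. Hence $(x,y)\in{(\sigma_{R})}_{_X}(R)$ implies $(x,y)\in R$. Now $R$ is equivariant and injective, so Corollary \ref{yhm}(ii) applies to the pair $(x,y)\in R$ and yields ${\rm supp}\,x\subseteq{\rm supp}\,y$.
\end{proof}
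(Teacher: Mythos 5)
Your proof is correct and follows essentially the same route as the paper: both arguments combine Proposition \ref{iiin} with Corollary \ref{yhm}(ii). The only (immaterial) difference is that you use Proposition \ref{iiin}(i) to place $(x,y)$ in $R$ and apply Corollary \ref{yhm}(ii) to $R$, whereas the paper uses Proposition \ref{iiin}(ii) to note that ${(\sigma_{R})}_{_X}(R)$ is itself an equivariant injective relation and applies Corollary \ref{yhm}(ii) to it directly.
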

 
 \begin{proof}
 Since $R$  is an equivariant injective relation, by Proposition \ref{iiin}(ii), ${(\sigma_{R})}_{_X}(R)$ is injective. Using  Corollay \ref{yhm}(ii),  ${\rm supp}\,x\subseteq{\rm supp}\,y$.
\end{proof}

 \begin{proposition}\label{tgb}
 Let $X$ be a nominal set and $R\in \mathcal{R}(X, X)$ be equivariant. Then $R$ is injective if and only if ${(\sigma_{R})}_{_X}(\Delta_{_{X}})\subseteq \Delta_{_{X}}$.
 \end{proposition}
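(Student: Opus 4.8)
The plan is to reduce everything to a concrete rereading of the formula in Proposition \ref{prop define deterministic}. Specializing that characterization to the case $Y=X$, $S=X$ and $\rho=\Delta_{_{X}}$ (viewed as an equivariant relation into $\overrightarrow{R}(X)$, which makes sense since $\Delta_{_{X}}$ is equivariant and $\overrightarrow{R}(X)$ is an equivariant subset of $X$ by Corollary \ref{n}(i)), a pair $(x,x')$ lies in ${(\sigma_{R})}_{_X}(\Delta_{_{X}})$ exactly when there are $y_1\in X$ and $y_2\in\overrightarrow{R}(X)$ with $(x,y_1)\in R$, $(x',y_2)\in R$ and $(y_1,y_2)\in\Delta_{_{X}}$. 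Since the last condition forces $y_1=y_2$, this collapses to a clean statement: ${(\sigma_{R})}_{_X}(\Delta_{_{X}})=\{(x,x')\in X\times X: \exists\, y\in X,\ (x,y)\in R \text{ and } (x',y)\in R\}$. Once this reformulation is written down, both implications of the equivalence are essentially one line each.

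For the forward implication I would assume $R$ is injective, take an arbitrary $(x,x')\in{(\sigma_{R})}_{_X}(\Delta_{_{X}})$, use the reformulation to get a common $y$ with $(x,y),(x',y)\in R$, and then invoke injectivity of $R$ (Definition \ref{zzzz}(i)) to conclude $x=x'$, i.e. $(x,x')\in\Delta_{_{X}}$; hence ${(\sigma_{R})}_{_X}(\Delta_{_{X}})\subseteq\Delta_{_{X}}$. For the converse I would assume ${(\sigma_{R})}_{_X}(\Delta_{_{X}})\subseteq\Delta_{_{X}}$ and, to verify injectivity of $R$, suppose $(x,y),(x',y)\in R$ for some $x,x',y$. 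Then automatically $y\in\overrightarrow{R}(X)$, so choosing $y_1=y_2=y$ in the characterization places $(x,x')$ in ${(\sigma_{R})}_{_X}(\Delta_{_{X}})$, which by hypothesis is contained in $\Delta_{_{X}}$; therefore $x=x'$ and $R$ is injective.

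There is no serious obstacle here: the entire content is the translation step in the first paragraph, and the only place warranting a sentence of care is justifying that ${(\sigma_{R})}_{_X}(\Delta_{_{X}})$ is a well-formed object of $\mathcal{R}_{\rm fs}(X,X)$ so that comparing it with $\Delta_{_{X}}$ is meaningful — which follows from the equivariance of $\Delta_{_{X}}$ and of $\overrightarrow{R}$ together with Note \ref{exmple deter morphism}(ii). In particular this is a direct corollary of the characterization underlying Propositions \ref{iiin} and \ref{tgb} rather than an independent computation, and no appeal to supports or permutations is needed.
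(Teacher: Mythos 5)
Your proposal is correct and follows essentially the same route as the paper's own proof: both directions reduce to the observation that $(x,x')\in{(\sigma_{R})}_{_X}(\Delta_{_{X}})$ precisely when $x$ and $x'$ are $R$-related to a common element, after which injectivity of $R$ and the inclusion ${(\sigma_{R})}_{_X}(\Delta_{_{X}})\subseteq\Delta_{_{X}}$ are immediately seen to be equivalent. The extra sentence on well-formedness is harmless but not needed beyond what the paper already establishes in Proposition \ref{prop define deterministic}.
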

 
 \begin{proof}
 Suppose $R$ is injective and $(x,s)\in {(\sigma_{R})}_{_X}(\Delta_{X})$. So we have:
$$(x,s)\in {(\sigma_{R})}_{_X}(\Delta_{X} )\Leftrightarrow\text{ there exists } x_{1}\in  \overrightarrow{R}(X)\text{ such that }\vcenter{\xymatrix @-1.5pc @ur { x\ar[d]_{R}&s\ar[d]^{R}\\
  x_{1}\ar[r]_{\Delta_{X}} & x_{1}.}}
$$
Since $R$ is injective, so $x=s$ and we have ${(\sigma_{R})}_{_X}(\Delta_{X} )\subseteq \Delta_{X}$.  Conversely, suppose $(x,s),(x',s)\in R$, so we have
$\vcenter{\xymatrix @-1.5pc @ur { x\ar[d]_{R}&x'\ar[d]^{R}\\
  s\ar[r]_{\Delta_{X}} & s.}}
$
Then $(x,x')\in {(\sigma_{R})}_{_X}(\Delta_{X} )$. Since ${(\sigma_{R})}_{_X}(\Delta_{X} )\subseteq\Delta_{X}$, so $x=x'$. Thus $R$ is injective.
 \end{proof}

 \begin{proposition}\label{ysllmm}
 Suppose $X$ is a nominal set and $R\in \mathcal{R}(X, X)$ is an equivariant  symmetric relation. If   ${(\sigma_{R})}_{_X}(R)$ is  well-defined, then  
 
 {\rm (i)} the relation $R$ is injective.
 
  {\rm (ii)} the relation $R$ is  well-defined.
 \end{proposition}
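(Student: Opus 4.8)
The plan is to unfold the description of ${(\sigma_{R})}_{_X}(R)$ coming from Proposition~\ref{prop define deterministic} (with $S=X$ and $\rho=R$) and to use the symmetry of $R$ to feed the well-definedness hypothesis two pairs with a common first coordinate. Recall that, in this situation, $(a,b)\in {(\sigma_{R})}_{_X}(R)$ if and only if there are $y_1\in X$ and $y_2\in\overrightarrow{R}(X)={\rm Im}R$ with $(a,y_1)\in R$, $(b,y_2)\in R$ and $(y_1,y_2)\in R$.

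For part (i), suppose $(x,y),(x',y)\in R$; the goal is $x=x'$. Symmetry gives $(y,x),(y,x')\in R$. I claim that both $(y,x)$ and $(y,x')$ lie in ${(\sigma_{R})}_{_X}(R)$. For $(y,x)$ take the witnesses $y_1:=x$ and $y_2:=y$: then $(y,y_1)=(y,x)\in R$, $(x,y_2)=(x,y)\in R$, $(y_1,y_2)=(x,y)\in R$, and $y_2=y\in{\rm Im}R$ because $(x,y)\in R$ --- exactly the defining square. Replacing $x$ by $x'$ throughout gives $(y,x')\in{(\sigma_{R})}_{_X}(R)$ in the same way. Since ${(\sigma_{R})}_{_X}(R)$ is well-defined and the two pairs share the first coordinate $y$, we get $x=x'$, so $R$ is injective.

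For part (ii) I would just combine (i) with symmetry: if $(x,y),(x,y')\in R$, then $(y,x),(y',x)\in R$, and these two pairs share the \emph{second} coordinate $x$, so injectivity of $R$ (from (i)) forces $y=y'$; thus $R$ is well-defined. Alternatively, one can run the witness argument of (i) directly, using $y_1:=y$ and $y_2:=x$ to place $(x,y)$ into ${(\sigma_{R})}_{_X}(R)$, and likewise $(x,y')$, then invoke well-definedness of ${(\sigma_{R})}_{_X}(R)$.

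I do not expect a genuine obstacle here; the one thing to watch is the bookkeeping inside the defining square of ${(\sigma_{R})}_{_X}$ --- in particular checking the side condition $y_2\in{\rm Im}R$, which holds automatically since the chosen $y_2$ is always produced as a value of $R$ --- and keeping straight which coordinate is ``fixed'' in the notions of well-definedness versus injectivity. The symmetry hypothesis is what makes everything go through, by allowing each edge of $R$ to be used in either direction.
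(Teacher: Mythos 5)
Your proof is correct and follows essentially the same route as the paper: both arguments use symmetry to realize the two given pairs of $R$ as pairs of ${(\sigma_{R})}_{_X}(R)$ sharing a first coordinate (with the same choice of witnesses $y_1,y_2$ in the defining square) and then invoke well-definedness. Your derivation of (ii) from (i) via symmetry is a minor streamlining of the paper's ``similar to part (i)'' and is equally valid.
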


 \begin{proof}
 {\rm (i)} Suppose $(x',x), (x'',x)\in R$, so we have  $\vcenter{\xymatrix @-1.5pc @ur { x'\ar[d]_{R}&x\ar[d]^{R}\\
 x\ar[r]_{R} & x'}}$ and $\vcenter{\xymatrix @-1.5pc @ur { x\ar[d]_{R}&x'\ar[d]^{R}\\
 x'\ar[r]_{R} & x}}$ and $\vcenter{\xymatrix @-1.5pc @ur { x''\ar[d]_{R}&x\ar[d]^{R}\\
 x\ar[r]_{R} & x''}}$ and $\vcenter{\xymatrix @-1.5pc @ur {x\ar[d]_{R}& x''\ar[d]^{R}\\
  x''\ar[r]_{R} & x}}$. Then $(x',x), (x,x'),(x'',x), (x,x'')\in {(\sigma_{R})}_{_X}(R)$. Since ${(\sigma_{R})}_{_X}(R)$ is  well-defined, so $x'=x''$. Thus $R$ is injective.
  
  \medskip
  
  {\rm (ii)} The proof is similar to part (i).
 \end{proof}

 \begin{corollary}
 Suppose $X$ is a nominal set and $R\in \mathcal{R}(X, X)$ is an equivariant  symmetric relation. 
 
 {\rm (i)} Then ${(\sigma_{R})}_{_X}(R)$ is  well-defined if and only if $R$ is injective.
 
 \medskip
 
 {\rm (ii)}   If ${(\sigma_{R})}_{_X}(R)$ is  well-defined and $(x,y)\in R$, then ${\rm supp}\,x={\rm supp}\,y$.
 \end{corollary}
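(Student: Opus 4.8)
The plan is to assemble both parts from results already established, since the relation ${(\sigma_{R})}_{_X}(R)$ has been analysed in Propositions \ref{iiin} and \ref{ysllmm}, and the support equality for equivariant injective symmetric relations is recorded in Proposition \ref{faroh}.

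For the forward implication of (i), I would assume $R$ is injective and first observe that a symmetric injective relation is automatically well-defined: if $(x,y),(x,y')\in R$, then symmetry gives $(y,x),(y',x)\in R$, and injectivity of $R$ forces $y=y'$. Hence $R$ is well-defined. Now by Proposition \ref{iiin}(i) we have ${(\sigma_{R})}_{_X}(R)\subseteq R$, and any sub-relation of a well-defined relation is again well-defined, so ${(\sigma_{R})}_{_X}(R)$ is well-defined. (Alternatively, one may note that ${(\sigma_{R})}_{_X}(R)$ is symmetric whenever $R$ is --- this is immediate from the defining diagram in Proposition \ref{prop define deterministic} together with symmetry of $R$ --- and then combine symmetry with Proposition \ref{iiin}(ii), which gives injectivity of ${(\sigma_{R})}_{_X}(R)$, to conclude well-definedness.) The reverse implication of (i), namely that well-definedness of ${(\sigma_{R})}_{_X}(R)$ forces $R$ to be injective, is precisely Proposition \ref{ysllmm}(i).

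For (ii), suppose ${(\sigma_{R})}_{_X}(R)$ is well-defined and $(x,y)\in R$. By part (i), $R$ is injective, so $R$ is an equivariant, injective, symmetric relation on $X$; Proposition \ref{faroh}(i) then yields ${\rm supp}\,x={\rm supp}\,y$.

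There is essentially no obstacle here: the statement is a corollary in the literal sense, packaging Propositions \ref{iiin}, \ref{ysllmm} and \ref{faroh}. The only point that needs to be spelled out is the elementary observation that \emph{symmetric} $+$ \emph{injective} $\Rightarrow$ \emph{well-defined} (together with the trivial remark that a sub-relation of a well-defined relation is well-defined), which is exactly what bridges Proposition \ref{iiin} to the forward direction of (i).
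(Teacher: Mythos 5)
Your proof is correct and follows essentially the same route as the paper's: the reverse direction of (i) is Proposition \ref{ysllmm}(i), the forward direction combines the observation that a symmetric injective relation is well-defined with Proposition \ref{iiin}(i), and (ii) reduces via injectivity of $R$ to Proposition \ref{faroh}(i). Your explicit spelling-out of the step ``symmetric $+$ injective $\Rightarrow$ well-defined'' is a helpful clarification of a point the paper states only tersely.
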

 
 \begin{proof}
{\rm (i)} Suppose $R$ is  symmetric and ${(\sigma_{R})}_{_X}(R)$ is  well-defined. Then, by Proposition \ref{ysllmm}(i),  $R$ is injective. Conversely, suppose $R$ is injective. Since $R$ is  symmetric, the relation $R$ is  well-defined.  Now, Proposition  \ref{iiin}(i) implies that ${(\sigma_{R})}_{_X}(R)$ is  well-defined.
 
 {\rm (ii)} Suppose  $R$ is an equivariant  symmetric relation and ${(\sigma_{R})}_{_X}(R)$ is  well-defined and $(x,y)\in R$. By Proposition \ref{ysllmm}(i), $R$ is injective. Then Proposition \ref {faroh}(i) implies that ${\rm supp}\,x={\rm supp}\,y$.
\end{proof}

 \begin{proposition}\label{pjsxc}
 Suppose $X$ is a nominal set and $R\in \mathcal{R}(X, X)$ is equivariant.  If $\Delta_{X} \subseteq R$, then $\Delta_{X} \subseteq {(\sigma_{R})}_{_X}(R)$.
  \end{proposition}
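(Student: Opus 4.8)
The plan is to unwind the definition of $(\sigma_R)_X(R)$ supplied by Proposition \ref{prop define deterministic} at $S=X$ and $\rho=R$, and then exhibit, for each $x\in X$, the required witnesses $y_1,y_2$ directly from the hypothesis $\Delta_X\subseteq R$.

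Concretely, fix $x\in X$. By Proposition \ref{prop define deterministic} with $S=X$, the pair $(x,x)$ lies in $(\sigma_R)_X(R)$ if and only if there exist $y_1\in X$ and $y_2\in\overrightarrow{R}(X)$ with $(x,y_1)\in R$, $(x,y_2)\in R$, and $(y_1,y_2)\in R$. I would take $y_1=y_2=x$. Since $\Delta_X\subseteq R$ gives $(x,x)\in R$, all three conditions $(x,y_1)=(x,x)\in R$, $(x,y_2)=(x,x)\in R$, and $(y_1,y_2)=(x,x)\in R$ hold; moreover $x\in\overrightarrow{R}(x)\subseteq\overrightarrow{R}(X)$ because $(x,x)\in R$, so the membership constraint $y_2\in\overrightarrow{R}(X)$ is satisfied as well. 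Hence $(x,x)\in(\sigma_R)_X(R)$, and since $x$ was arbitrary, $\Delta_X\subseteq(\sigma_R)_X(R)$.

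There is essentially no obstacle here: the statement is a direct diagram chase, and the only thing to be careful about is verifying the side condition $y_2\in\overrightarrow{R}(X)$, which is automatic from $\Delta_X\subseteq R$. Equivariance of $R$ is not even needed for this inclusion (it only guarantees $(\sigma_R)_X$ is well defined as in Proposition \ref{prop define deterministic}), so the proof is a one-line application of the definition once the witnesses are chosen.
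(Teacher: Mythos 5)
Your proof is correct and is essentially the paper's own argument: the paper also takes $y_1=y_2=x$ and uses $(x,x)\in R$ to fill in all three edges of the defining diagram. Your extra remark that $y_2=x\in\overrightarrow{R}(X)$ follows from $(x,x)\in R$ is a small but welcome explicit check that the paper leaves implicit.
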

  \begin{proof}
 Since $(x,x)\in R$, for every $x\in X$, we have $\vcenter{\xymatrix @-1.5pc @ur { x\ar[d]_{R}&x\ar[d]^{R}\\
  x\ar[r]_{R} & x}}.$
  Thus we have $(x,x)\in~ {(\sigma_{R})}_{_X}(R)$,  for every $x\in X$, and we get the desired result.
 \end{proof}

\begin{corollary}\label{iff}
Suppose $X$ is a nominal set and $R\in \mathcal{R}(X, X)$ is equivariant. Then $R$ is total injective if and only if $  {(\sigma_{R})}_{_X}(\Delta_{X} )=\Delta_{X}$.
\end{corollary}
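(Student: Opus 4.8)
The plan is to reduce the biconditional to Proposition \ref{tgb} together with one elementary observation about totality. Recall from the proof of Proposition \ref{tgb} the explicit description of the relevant value of the natural transformation: $(x,s)\in {(\sigma_{R})}_{_X}(\Delta_{X})$ holds if and only if there is some $x_1\in \overrightarrow{R}(X)$ with $(x,x_1)\in R$ and $(s,x_1)\in R$; equivalently, $\overrightarrow{R}(x)\cap \overrightarrow{R}(s)\neq\emptyset$. Also recall that, by Definition \ref{zzzz}(iii), ``$R$ total injective'' means precisely that $R$ is injective and $\mathrm{Dom}\,R=X$.

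For the forward implication I would assume $R$ is total injective. The inclusion ${(\sigma_{R})}_{_X}(\Delta_{X})\subseteq \Delta_{X}$ is exactly the first half of Proposition \ref{tgb}, using that $R$ is injective. For the reverse inclusion $\Delta_{X}\subseteq {(\sigma_{R})}_{_X}(\Delta_{X})$ I would invoke totality: given $x\in X$, pick $x_1\in X$ with $(x,x_1)\in R$ (so $x_1\in\overrightarrow{R}(X)$); then the square with $s=x$ and witness $x_1$ shows $(x,x)\in {(\sigma_{R})}_{_X}(\Delta_{X})$. Combining the two inclusions gives ${(\sigma_{R})}_{_X}(\Delta_{X})=\Delta_{X}$.

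For the converse I would assume ${(\sigma_{R})}_{_X}(\Delta_{X})=\Delta_{X}$. In particular ${(\sigma_{R})}_{_X}(\Delta_{X})\subseteq\Delta_{X}$, so the second half of Proposition \ref{tgb} already yields that $R$ is injective. It only remains to see $\mathrm{Dom}\,R=X$: for any $x\in X$ we have $(x,x)\in\Delta_{X}={(\sigma_{R})}_{_X}(\Delta_{X})$, so by the description above there is $x_1\in\overrightarrow{R}(X)$ with $(x,x_1)\in R$, whence $x\in\mathrm{Dom}\,R$. Thus $R$ is injective with full domain, i.e.\ total injective.

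I do not expect a genuine obstacle here; the substance is carried entirely by Proposition \ref{tgb}, and the only new point is recognising that the condition separating ``injective'' from ``total injective'' matches exactly the reverse inclusion $\Delta_{X}\subseteq {(\sigma_{R})}_{_X}(\Delta_{X})$, which is controlled by totality of $R$. The one thing to keep in mind while writing it up is that $\Delta_{X}$ here is to be read as an element of $\mathcal{R}_{\rm fs}(X,\overrightarrow{R}(X))$ — so only the pairs $(x_1,x_1)$ with $x_1\in\overrightarrow{R}(X)$ are in play — which is why the witness $x_1$ is automatically forced into $\overrightarrow{R}(X)$, consistently with the definition of ${(\sigma_{R})}_{_S}$ in Proposition \ref{prop define deterministic}.
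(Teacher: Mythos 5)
Your proposal is correct and follows essentially the same route as the paper: both directions are reduced to Proposition \ref{tgb} for the inclusion ${(\sigma_{R})}_{_X}(\Delta_{X})\subseteq \Delta_{X}$ (equivalently, injectivity of $R$), while the reverse inclusion $\Delta_{X}\subseteq {(\sigma_{R})}_{_X}(\Delta_{X})$ is matched with ${\rm Dom}\,R=X$. The paper states these two equivalences more tersely; you simply spell out the witness $x_1\in\overrightarrow{R}(X)$ explicitly, which is a faithful elaboration rather than a different argument.
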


\begin{proof}
Suppose $R$ is total injective. Then,  ${(\sigma_{R})}_{_X}(\Delta_{X} )\subseteq\Delta_{X}$  follows from  Proposition \ref{tgb}. Since ${\rm Dom}R=X$,  we have $\Delta_{X} \subseteq{(\sigma_{R})}_{_X}(\Delta_{X} )$. Therefore $  {(\sigma_{R})}_{_X}(\Delta_{X} )=\Delta_{X}$. Conversely, suppose $  {(\sigma_{R})}_{_X}(\Delta_{X} )=\Delta_{X}$. Since  $ {(\sigma_{R})}_{_X}(\Delta_{X} )\subseteq\Delta_{X}$, applying Proposition \ref{tgb}, we have $R$ is injective. Since $\Delta_{X} \subseteq{(\sigma_{R})}_{_X}(\Delta_{X} )$, so  ${\rm Dom}R=X$. Therefore $R$ is total injective.
\end{proof}

\begin{theorem}\label{tgbd}
Let $X$ be a nominal set, $R\in \mathcal{R}(X, X)$ be  equivariant.

{\rm (i)}  If  $\rho\in \mathcal{R}_{_{\rm fs}}(X, X)$ is coreflexive, then $ {(\sigma_{R})}_{_X}(\rho )$ is symmetric.

{\rm (ii)} If $R$ is reflexive, then ${(\sigma_{R})}_{_X}(R)\subseteq R$ and so ${(\sigma_{R})}_{_X}(\Delta_{X})\subseteq R$.

{\rm (iii)} If $R$ is symmetric and transitive, then ${(\sigma_{R})}_{_X}(\Delta_{X})\subseteq R$.
\end{theorem}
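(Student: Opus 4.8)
The plan is to reduce every item to the explicit description of ${(\sigma_{R})}_{_X}$ given in Proposition \ref{prop define deterministic}: for $\rho\in\mathcal{R}_{\rm fs}(X,\overrightarrow{R}(X))$ one has $(x,s)\in {(\sigma_{R})}_{_X}(\rho)$ if and only if there are $y_{1}\in X$ and $y_{2}\in\overrightarrow{R}(X)$ with $(x,y_{1})\in R$, $(s,y_{2})\in R$ and $(y_{1},y_{2})\in\rho$. It is also convenient to record the one--line reformulation in the calculus of relations, ${(\sigma_{R})}_{_X}(\rho)=R^{-1}\circ\rho\circ R$ as a relation on $X$ (the side condition $y_{2}\in\overrightarrow{R}(X)$ being automatic from $(y_{1},y_{2})\in\rho$). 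With this in hand each item becomes a short witness chase, and the real work is only to choose the witnesses correctly.

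For (i), recall that a coreflexive relation $\rho$ (that is, $\rho\subseteq\Delta_{X}$) satisfies $\rho^{-1}=\rho$. Given $(x,s)\in {(\sigma_{R})}_{_X}(\rho)$, pick $y_{1},y_{2}$ as above; coreflexivity forces $y_{1}=y_{2}=:y$, so $(x,y),(s,y)\in R$, $y\in\overrightarrow{R}(X)$ and $(y,y)\in\rho$. Feeding this same configuration back into the definition with both witnesses taken to be $y$ gives $(s,x)\in {(\sigma_{R})}_{_X}(\rho)$. Hence ${(\sigma_{R})}_{_X}(\rho)$ is symmetric; equivalently ${(\sigma_{R})}_{_X}(\rho)^{-1}=R^{-1}\circ\rho^{-1}\circ R=R^{-1}\circ\rho\circ R={(\sigma_{R})}_{_X}(\rho)$.

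For (iii), unfold ${(\sigma_{R})}_{_X}(\Delta_{X})$ as in the proof of Proposition \ref{tgb}: $(x,s)\in {(\sigma_{R})}_{_X}(\Delta_{X})$ iff there is $z\in\overrightarrow{R}(X)$ with $(x,z)\in R$ and $(s,z)\in R$ (the two middle vertices collapsing through $\Delta_{X}$). Symmetry of $R$ turns $(s,z)\in R$ into $(z,s)\in R$, and transitivity of $R$ then combines $(x,z)\in R$ and $(z,s)\in R$ into $(x,s)\in R$. So ${(\sigma_{R})}_{_X}(\Delta_{X})\subseteq R$; relationally this is just $R^{-1}\circ R=R\circ R\subseteq R$.

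Part (ii) is where I expect the only genuine difficulty. The ``and so'' causes no trouble: reflexivity gives $\Delta_{X}\subseteq R$, hence $\overrightarrow{R}(X)=X$, so that ${(\sigma_{R})}_{_X}(\Delta_{X})$ is legitimate, and then ${(\sigma_{R})}_{_X}(\Delta_{X})\subseteq {(\sigma_{R})}_{_X}(R)$ by the monotonicity of ${(\sigma_{R})}_{_X}$ in its argument (Theorem \ref{preserve}(i)). The delicate step is the first inclusion ${(\sigma_{R})}_{_X}(R)\subseteq R$: unfolding only produces $y_{1},y_{2}$ with $(x,y_{1}),(s,y_{2}),(y_{1},y_{2})\in R$, and reflexivity by itself does not yield $(x,s)\in R$ --- for the three--element discrete nominal set $X=\{1,2,3\}$ and $R=\Delta_{X}\cup\{(1,2),(2,3)\}$ one has $(1,3)\in {(\sigma_{R})}_{_X}(R)\setminus R$ (witnesses $y_{1}=2$, $y_{2}=3$). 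I would therefore close the gap by bringing in one more hypothesis: either $R$ injective, invoking Proposition \ref{iiin} (injectivity collapses $(s,y_{2}),(y_{1},y_{2})\in R$ to $s=y_{1}$, whence $(x,s)=(x,y_{1})\in R$), or $R$ reflexive, symmetric and transitive, in which case ${(\sigma_{R})}_{_X}(R)=R^{-1}\circ R\circ R=R$ by the computation in (iii) together with reflexivity. Under either strengthening the remainder of (ii) is immediate.
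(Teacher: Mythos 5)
Your treatment of parts (i) and (iii) is correct and coincides with the paper's own argument: in (i) coreflexivity collapses the two witnesses to a single $y$ with $(x,y),(s,y)\in R$, and that configuration is manifestly symmetric in $x$ and $s$; in (iii) the same collapsed configuration $(x,z),(s,z)\in R$ is turned into $(x,s)\in R$ by symmetry and then transitivity. The relational identity ${(\sigma_{R})}_{_X}(\rho)=R^{-1}\circ\rho\circ R$ is a clean way to package both computations, and your verification that the side condition $y_{2}\in\overrightarrow{R}(X)$ is automatic is fine (it already follows from $(s,y_{2})\in R$).

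On part (ii) you are right to balk, and your counterexample is valid: for the discrete nominal set $X=\{1,2,3\}$ (on which every relation is equivariant) and $R=\Delta_{X}\cup\{(1,2),(2,3)\}$, the witnesses $y_{1}=2$, $y_{2}=3$ give $(1,3)\in{(\sigma_{R})}_{_X}(R)\setminus R$, so the stated inclusion fails for a merely reflexive $R$. One can check that even the weaker conclusion ${(\sigma_{R})}_{_X}(\Delta_{X})\subseteq R$ fails there: $(2,1)\in{(\sigma_{R})}_{_X}(\Delta_{X})$ via the common witness $2$, since $(2,2),(1,2)\in R$, yet $(2,1)\notin R$. The paper's own proof of (ii) commits exactly the error you anticipated: it unfolds $(x,y)\in{(\sigma_{R})}_{_X}(R)$ as though the existential witnesses were forced to be $x$ and $y$ themselves, which only establishes the easy reverse inclusion $R\subseteq{(\sigma_{R})}_{_X}(R)$ supplied by reflexivity, not the claimed one. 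Both of your repairs are sound: injectivity of $R$ reduces the first inclusion to Proposition \ref{iiin}(i) (and then reflexivity is not even needed), while for $R$ reflexive, symmetric and transitive one has ${(\sigma_{R})}_{_X}(R)=R^{-1}\circ R\circ R\subseteq R$; and your derivation of the ``and so'' clause from $\Delta_{X}\subseteq R$ together with the monotonicity in Theorem \ref{preserve}(i) is the intended one.
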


\begin{proof}
{\rm (i)} Let $(x,y)\in {(\sigma_{R})}_{_X}(\rho)$. Then we have:
\begin{align*}
(x,y)\in {(\sigma_{R})}_{_X}(\rho)&\Leftrightarrow\text{ there exist } x_{1}\in  X,x_{2}\in \overrightarrow{R}(X)\text{ such that }\vcenter{\xymatrix @-1.5pc @ur { x\ar[d]_{R}&y\ar[d]^{R}\\
  x_{1}\ar[r]_{\rho} & x_{2}}}.
 \end{align*}
 Since $\rho$ is coreflexive,  $ x_{1}= x_{2}$ and we have $\vcenter{\xymatrix @-1.5pc @ur { y\ar[d]_{R}&x\ar[d]^{R}\\
  x_{1}\ar[r]_{\rho} & x_{1}}}
$. So $(y,x)\in {(\sigma_{R})}_{_X}(\rho)$.

Since $R$ is reflexive, $\Delta_{_{X}}\subseteq R$. So, $x\in \overrightarrow{R}(X)$, for all $x\in X$.

{\rm(ii)} Consider an arbitrary $(x,y)\in {(\sigma_{R})}_{_X}(R)$. Then we have:
\begin{align*}
(x,y)\in {(\sigma_{R})}_{_X}(R)&\Leftrightarrow\text{ there exist } x\in X, y\in  \overrightarrow{R}(X) \text{ such that }\vcenter{\xymatrix @-1.5pc @ur { x\ar[d]_{R}&y\ar[d]^{R}\\
  x\ar[r]_{R} & y}}\\
  &\Rightarrow (x,y)\in R.
\end{align*}

Now, since ${(\sigma_{R})}_{_X}$ is a map and $\Delta_{_{X}}\subseteq R$, we have ${(\sigma_{R})}_{_X}(\Delta_{_{X}})\subseteq {(\sigma_{R})}_{_X}(R)\subseteq R$.

{\rm(iii)} For every $(x,s)\in {(\sigma_{R})}_{_X}(\Delta_{X})$, we have:
\begin{align*}
(x,s)\in {(\sigma_{R})}_{_X}(\Delta_{X})&\Leftrightarrow\text{ there exists } y_{1}\in  \overrightarrow{R}(X) \text{ such that }\vcenter{\xymatrix @-1.5pc @ur { x\ar[d]_{R}&s\ar[d]^{R}\\
  y_{1}\ar[r]_{\Delta_{X}} & y_{1}}}\\
  &\Leftrightarrow\text{ there exists }  y_{1}\in   \overrightarrow{R}(X)\text{ such that } (x,y_{1})\in R, (s,y_{1})\in R\\
 &\Leftrightarrow\text{ there exists } y_{1}\in   \overrightarrow{R}(X)\text{ such that } (x,y_{1})\in R, (y_{1},s)\in R \\
  &\Rightarrow (x,s)\in R.\tag*{\qedhere}
\end{align*}
\end{proof}

\begin{proposition}\label{ubzp}
Suppose $X$ is a nominal set with $\mathcal{Z}(X)=\emptyset$  and $R\in \mathcal{R}(X, X)$ is a partial surjective equivariant map. If $(x,y)\in {(\sigma_{R})}_{_X}(\Delta_{_X})$, then  $(x,y)\notin \sharp _{_X}$.

\end{proposition}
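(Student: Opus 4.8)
The plan is to unpack the definition of ${(\sigma_{R})}_{_X}(\Delta_{_X})$, extract a single witness element lying in the image of both $x$ and $y$ under $R$, and then combine the support inequality for partial surjective equivariant maps with the hypothesis $\mathcal{Z}(X)=\emptyset$ to force ${\rm supp}\,x$ and ${\rm supp}\,y$ to overlap.

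First I would observe that, since $R$ is surjective, $\overrightarrow{R}(X)={\rm Im}\,R=X$, so $\Delta_{_X}$ is genuinely an element of $\mathcal{R}_{\rm fs}(X,\overrightarrow{R}(X))$ and ${(\sigma_{R})}_{_X}(\Delta_{_X})$ is well-defined. Specialising the description of ${(\sigma_{R})}_{_S}$ from Proposition \ref{prop define deterministic} to $S=X$ and $\rho=\Delta_{_X}$, a pair $(x,y)$ lies in ${(\sigma_{R})}_{_X}(\Delta_{_X})$ exactly when there are $y_1,y_2\in\overrightarrow{R}(X)$ with $(x,y_1)\in R$, $(y,y_2)\in R$ and $(y_1,y_2)\in\Delta_{_X}$; the last condition simply says $y_1=y_2$. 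Thus $(x,y)\in{(\sigma_{R})}_{_X}(\Delta_{_X})$ if and only if there exists a single $y_1\in X$ with $(x,y_1)\in R$ and $(y,y_1)\in R$.

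Next I would apply Corollary \ref{parsur}(ii): as $R$ is a partial surjective equivariant map and $(x,y_1)\in R$, we obtain ${\rm supp}\,y_1\subseteq{\rm supp}\,x$, and applying it once more to $(y,y_1)\in R$ yields ${\rm supp}\,y_1\subseteq{\rm supp}\,y$. Hence ${\rm supp}\,y_1\subseteq{\rm supp}\,x\cap{\rm supp}\,y$. Since $\mathcal{Z}(X)=\emptyset$, every element of $X$ has nonempty support, so ${\rm supp}\,y_1\neq\emptyset$; therefore ${\rm supp}\,x\cap{\rm supp}\,y\neq\emptyset$, which is precisely the assertion $(x,y)\notin\sharp_{_X}$.

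I do not expect a genuine obstacle here; the only points needing attention are checking that $\Delta_{_X}$ is an admissible argument of ${(\sigma_{R})}_{_X}$ (which uses exactly the surjectivity built into the hypothesis) and keeping straight that in Corollary \ref{parsur}(ii) it is the support of the \emph{second} coordinate that is dominated. The rest is a short chain of set inclusions.
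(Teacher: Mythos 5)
Your proof is correct and follows essentially the same route as the paper's: extract the common witness $y_1$ from the definition of ${(\sigma_{R})}_{_X}(\Delta_{_X})$, apply Corollary \ref{parsur}(ii) twice to get ${\rm supp}\,y_1\subseteq {\rm supp}\,x\cap{\rm supp}\,y$, and use $\mathcal{Z}(X)=\emptyset$ to conclude this intersection is nonempty. Your preliminary remark that $\overrightarrow{R}(X)=X$ by surjectivity is a small extra check the paper leaves implicit, but otherwise the arguments coincide.
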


\begin{proof}
Suppose $(x,y)\in {(\sigma_{R})}_{_X}(\Delta_{_X})$. Then we have:
\begin{align*}
(x,y)\in {(\sigma_{R})}_{_X}(\Delta_{_X} )&\Leftrightarrow\text{ there exists } x_{1}\in   \overrightarrow{R}(X)\text{ such that }\vcenter{\xymatrix @-1.5pc @ur { x\ar[d]_{R}&y\ar[d]^{R}\\
  x_{1}\ar[r]_{\Delta_{_X}} & x_{1}}}.
 \end{align*}
 Since $R$ is a partial surjective equivariant map ,  Corollary \ref{parsur}(ii) implies ${\rm supp}\,x_{1}\subseteq {\rm supp}\,y$ and ${\rm supp}\,x_{1}\subseteq {\rm supp}\,x$. Since $\mathcal{Z}(X)=\emptyset$, we have ${\rm supp}\,x\cap {\rm supp}\,y\neq \emptyset$. Thus   $(x,y)\notin \sharp _{_X}$.
\end{proof}

\begin{corollary}\label{ygde}
Suppose $X$ and $\mathbb{D}$ are nominal sets  and $R\in \mathcal{R}(X, \mathbb{D})$ is a partial surjective equivariant map. If $(x,y)\in {(\sigma_{R})}_{_X}(\Delta_{\mathbb{D}})$, then 

{\rm (i)} $(x,y)\notin \sharp _{_X}$.

{\rm (ii)} $x,y\notin\mathcal{Z}(X)$.
\end{corollary}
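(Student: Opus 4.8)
The plan is to unfold the definition of ${(\sigma_{R})}_{_X}(\Delta_{\mathbb{D}})$ supplied by Proposition \ref{prop define deterministic} and then reduce both claims to the support estimate for partial surjective maps in Corollary \ref{parsur}. First I would observe that, since $R$ is surjective, $\overrightarrow{R}(X)={\rm Im}R=\mathbb{D}$, so that $\Delta_{\mathbb{D}}$ really is a legitimate argument of ${(\sigma_{R})}_{_X}$: it is equivariant, hence finitely supported, and lies in $\epsilon_{\mathbb{D}}(\overrightarrow{R}(X))=\mathcal{R}_{\rm fs}(\mathbb{D},\mathbb{D})$. Spelling out the definition, $(x,y)\in{(\sigma_{R})}_{_X}(\Delta_{\mathbb{D}})$ precisely when there is some $d\in\overrightarrow{R}(X)=\mathbb{D}$ with $(x,d)\in R$ and $(y,d)\in R$, the side condition $(d,d)\in\Delta_{\mathbb{D}}$ being vacuous. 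Note also that $(x,y)\in{(\sigma_{R})}_{_X}(\Delta_{\mathbb{D}})\subseteq\mathcal{R}_{\rm fs}(X,X)$, so $x,y\in X$ and the statement $x,y\notin\mathcal{Z}(X)$ makes sense.

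Next I would apply Corollary \ref{parsur}(ii) twice. Since $R$ is a partial surjective equivariant map and $(x,d)\in R$, we get ${\rm supp}\,d\subseteq{\rm supp}\,x$; likewise $(y,d)\in R$ yields ${\rm supp}\,d\subseteq{\rm supp}\,y$. By Example \ref{example nom}(i), ${\rm supp}_{_{\mathbb{D}}}\,d=\{d\}$, so $d\in{\rm supp}\,x\cap{\rm supp}\,y$; in particular ${\rm supp}\,x\cap{\rm supp}\,y\neq\emptyset$, which is exactly $(x,y)\notin\sharp_{_X}$, proving (i). For (ii), the same chain gives $d\in{\rm supp}\,x$ and $d\in{\rm supp}\,y$, so neither ${\rm supp}\,x$ nor ${\rm supp}\,y$ is empty; since the zero elements of a nominal set are exactly those with empty support, $x,y\notin\mathcal{Z}(X)$.

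I do not anticipate a real obstacle: the argument is essentially the specialization of the proof of Proposition \ref{ubzp} to the codomain $\mathbb{D}$. The only point that deserves a word of care is that the hypothesis $\mathcal{Z}(X)=\emptyset$ used there is not needed here, because the intermediate element now lives in $\mathbb{D}$ and every atom has nonempty (indeed singleton) support, so the conclusions ${\rm supp}\,x\cap{\rm supp}\,y\neq\emptyset$ and ${\rm supp}\,x,{\rm supp}\,y\neq\emptyset$ come for free. I would phrase the write-up so that the unfolding of ${(\sigma_{R})}_{_X}(\Delta_{\mathbb{D}})$ is done once, and then (i) and (ii) are read off the single inclusion $d\in{\rm supp}\,x\cap{\rm supp}\,y$.
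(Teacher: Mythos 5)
Your proof is correct and follows essentially the same route as the paper, which simply cites Proposition \ref{ubzp}: you unfold ${(\sigma_{R})}_{_X}(\Delta_{\mathbb{D}})$ to get a common atom $d$ with $(x,d),(y,d)\in R$, apply Corollary \ref{parsur}(ii) twice, and use ${\rm supp}\,d=\{d\}$. If anything, your write-up is more careful than the paper's one-line citation, since Proposition \ref{ubzp} as stated has codomain $X$ and assumes $\mathcal{Z}(X)=\emptyset$ and does not itself yield part (ii); you correctly observe that the singleton support of atoms is what replaces that hypothesis and delivers both conclusions.
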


\begin{proof}
 It follows from Proposition \ref{ubzp}.
\end{proof}

 \medskip

 \subsection{Some concrete examples for natural deterministic morphism}
 
Here, we try to provide a better understanding of the concept of natural deterministic morphism by giving some examples.
 
 \begin{definition}\cite{binding}\label{bin}
 Let $X$ be a nominal set. A \emph{binding operator} on $X$ is an equivariant
map $l : X \longrightarrow \mathcal{P}_{\rm f}(\mathbb{D})$. Each $l$ gives rise to a relation $\equiv_{l}$ on $X$, defined as

$x_{1} \equiv_{l} x_{2} \Leftrightarrow \text{there exists}~ \pi\in {\rm Perm}(\mathbb{D}),~\text{such that}~ \pi \sharp {\rm supp}(x_{1}) \setminus  l(x_{1})~ \text{and}~ x_{2} = \pi  x_{1}.$
\end{definition}

\begin{remark}\cite{binding}\label{lewanh}
Let $X$ be a nominal set endowed with a binding operator $l$. Then

{\rm (i)} the relation $\equiv_{l}$ is an equivariant equivalence relation. 

{\rm (ii)} if $(x,y)\in \equiv_{l}$, then ${\rm supp}\,x\setminus  {l}(x)={\rm supp}\,y\setminus  {l}(y)$.
\end{remark}

\begin{corollary}\label{zxcv}
Let $X$ be a nominal set endowed with a binding operator $l$. Then, 

{\rm (i)} $(\sigma_{{ \equiv_{l}}})_{_X}(\Delta_{X})\subseteq \equiv_{l}$.

\medskip

{\rm (ii)} There exists  $\pi\in {\rm Perm}(\mathbb{D})$ such that $\pi \sharp {\rm supp}\,x\setminus  {l}(x)$ and  $s=\pi x$, for $(x,s)\in (\sigma_{{ \equiv_{l}}})_{_X}(\Delta_{X})$.

\medskip

{\rm (iii)} ${\rm supp}\,x\setminus  {l}(x)={\rm supp}\,s\setminus  {l}(s)$, for $(x,s)\in (\sigma_{{ \equiv_{l}}})_{_X}(\Delta_{X})$.

\end{corollary}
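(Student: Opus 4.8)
The plan is to reduce all three statements to part (i) together with the structural facts about $\equiv_{l}$ recorded in Remark \ref{lewanh}. First I would note that, by Remark \ref{lewanh}(i), $\equiv_{l}$ is an equivariant equivalence relation on $X$; in particular it is an equivariant relation in $\mathcal{R}(X,X)$ that is symmetric and transitive, so it satisfies the hypotheses of Theorem \ref{tgbd}(iii) with $R=\equiv_{l}$. Applying that theorem gives at once $(\sigma_{\equiv_{l}})_{_X}(\Delta_{X})\subseteq\ \equiv_{l}$, which is part (i).

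For part (ii) I would take $(x,s)\in(\sigma_{\equiv_{l}})_{_X}(\Delta_{X})$; by part (i) we then have $(x,s)\in\ \equiv_{l}$, and unfolding Definition \ref{bin} yields exactly a permutation $\pi\in{\rm Perm}(\mathbb{D})$ with $\pi\ \sharp\ {\rm supp}\,x\setminus l(x)$ and $s=\pi x$. Part (iii) follows in the same manner: from $(x,s)\in\ \equiv_{l}$ and Remark \ref{lewanh}(ii) we read off ${\rm supp}\,x\setminus l(x)={\rm supp}\,s\setminus l(s)$.

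Since the argument is a short chain of invocations of already-established results, there is no substantive obstacle. The only point that needs a moment's care is verifying that Theorem \ref{tgbd}(iii) genuinely applies, i.e. that $\equiv_{l}$ is an \emph{equivariant} relation in $\mathcal{R}(X,X)$ and not merely an equivalence relation on the underlying set of $X$; but this is precisely the content of Remark \ref{lewanh}(i), so nothing extra is required.
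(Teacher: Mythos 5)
Your proposal is correct and follows essentially the same route as the paper: part (i) via Theorem \ref{tgbd}(iii) (justified by Remark \ref{lewanh}(i) that $\equiv_{l}$ is an equivariant, symmetric, transitive relation), and parts (ii) and (iii) by unfolding Definition \ref{bin} and invoking Remark \ref{lewanh}(ii). The only cosmetic difference is that the paper derives (iii) from (ii) while you derive it directly from (i); both amount to the same application of Remark \ref{lewanh}(ii).
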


\begin{proof}
{\rm (i)} The proof follows from Theorem \ref{tgbd}(iii).

{\rm (ii)} The proof follows from part (i) and Definition \ref{bin}.

\medskip

{\rm (iii)} The proof follows from part (ii) and Remark \ref{lewanh}(ii).
\end{proof}

\begin{proposition}\label{marrry}
Suppose $X$ is a nominal set endowed with a binding operator $l$ and $ R\in \mathcal{R}(X, X)$ is quivariant. If there exists $x\in X$ with $(x,x)\in (\sigma_{{ \equiv_{l}}})_{_X}(R)$, then $\equiv_{l}\cap R\neq\emptyset$. 
\end{proposition}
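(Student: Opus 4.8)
The plan is to unwind the definition of the natural deterministic morphism and then invoke only the fact that $\equiv_{l}$ is an equivalence relation. Recall from Proposition \ref{prop define deterministic}, read with $\equiv_{l}$ in the role of the equivariant relation there called $R$, with $X$ itself as the chosen finitely supported subset $S$, and with the hypothesis relation $R$ in the role of the argument there called $\rho$: since $\equiv_{l}$ is reflexive by Remark \ref{lewanh}(i), we have $\overrightarrow{\equiv_{l}}(X)=X$, so $R\in\mathcal{R}_{\rm fs}(X,\overrightarrow{\equiv_{l}}(X))$ and $(\sigma_{\equiv_{l}})_{_X}(R)$ is described by
\[
(a,b)\in(\sigma_{\equiv_{l}})_{_X}(R)\iff \exists\, y_{1},y_{2}\in X:\ (a,y_{1})\in\,\equiv_{l},\ (b,y_{2})\in\,\equiv_{l},\ (y_{1},y_{2})\in R.
\]

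First I would feed the hypothesis into this description: from $(x,x)\in(\sigma_{\equiv_{l}})_{_X}(R)$ I obtain witnesses $y_{1},y_{2}\in X$ with $(x,y_{1})\in\,\equiv_{l}$, $(x,y_{2})\in\,\equiv_{l}$ and $(y_{1},y_{2})\in R$. Then, since $\equiv_{l}$ is an equivalence relation by Remark \ref{lewanh}(i), symmetry yields $(y_{1},x)\in\,\equiv_{l}$ and transitivity together with $(x,y_{2})\in\,\equiv_{l}$ yields $(y_{1},y_{2})\in\,\equiv_{l}$. Hence $(y_{1},y_{2})\in\,\equiv_{l}\cap R$, so $\equiv_{l}\cap R\neq\emptyset$, as desired.

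The argument is essentially a one-line unfolding, so there is no real obstacle; the only point requiring care — and the place a reader could stumble — is the bookkeeping about which relation occupies which slot in the notation $(\sigma_{(-)})_{_S}(-)$, namely that here $\equiv_{l}$ plays the part of the equivariant relation of Proposition \ref{prop define deterministic} while the hypothesis relation $R$ is the argument $\rho$. Once that is fixed, reflexivity of $\equiv_{l}$ makes the domain condition $\overrightarrow{\equiv_{l}}(X)=X$ automatic, and symmetry and transitivity of $\equiv_{l}$ collapse the two $\equiv_{l}$-links through $x$ into a single $\equiv_{l}$-link between the $R$-related pair $(y_{1},y_{2})$.
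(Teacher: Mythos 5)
Your proof is correct, and it reaches the same pair of witnesses $(y_1,y_2)$ as the paper does, but it closes the argument by a genuinely different route. The paper, after unwinding $(x,x)\in(\sigma_{\equiv_l})_{_X}(R)$ to obtain $y_1,y_2$ with $x\equiv_l y_1$, $x\equiv_l y_2$ and $(y_1,y_2)\in R$, goes back to Definition \ref{bin}: it extracts permutations $\pi_1,\pi_2$ with $\pi_i x=y_i$ and $\pi_i\sharp\,{\rm supp}\,x\setminus l(x)$, writes $y_2=\pi_2\pi_1^{-1}y_1$, and verifies via Remark \ref{lewanh}(ii) that $\pi_2\pi_1^{-1}$ is fresh for ${\rm supp}\,y_1\setminus l(y_1)$, so that $(y_1,y_2)\in\,\equiv_l$ by the definition of the binding relation. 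You instead invoke Remark \ref{lewanh}(i) — that $\equiv_l$ is an equivalence relation — and get $(y_1,y_2)\in\,\equiv_l$ in one line from symmetry and transitivity. In effect the paper re-derives, in this concrete instance, exactly the symmetry-plus-transitivity step that the cited remark already guarantees, so your version is shorter and less error-prone (the paper's displayed computation only checks that $\pi_2\pi_1^{-1}$ preserves the set ${\rm supp}\,y_1\setminus l(y_1)$ setwise, whereas freshness requires pointwise fixing, which does follow from $\pi_1,\pi_2$ each being fresh but is glossed over). What the paper's concrete computation buys is an explicit permutation witnessing $y_1\equiv_l y_2$, which could be reused elsewhere; your abstract argument does not produce one, but none is needed for this statement. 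Your side remarks — that $\overrightarrow{\equiv_l}(X)=X$ by reflexivity, and the bookkeeping of which relation sits in which slot of $(\sigma_{(-)})_{_S}(-)$ — are accurate and harmless.
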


\begin{proof}
Let $(x,x)\in(\sigma_{{ \equiv_{l}}})_{_X}(R) $. Then,
$$(x,x)\in (\sigma_{{ \equiv_{l}}})_{_X}(R)\Leftrightarrow\text{ there exist } y_1\in X, y_2 \in \overrightarrow{\equiv_{l}}(X)\text{ such that }\vcenter{\xymatrix @-1.5pc @ur { x\ar[d]_{\equiv_{l}}& x\ar[d]^{\equiv_{l}}\\
  y_1\ar[r]_{ R} & y_{2}.}}
$$
\sloppypar\noindent By Definition \ref{bin}, there exist $\pi _{1}, \pi _{2}$ with $\pi_1 x=y_1 ,\ \pi_2 x=y_2$ and $\pi_1, \pi_2 \sharp {\rm supp}\,x\setminus ~l(x)$.  Also,
${\rm supp}\,x\setminus {l}(x)={\rm supp}\,y_{1}\setminus {l}(y_{1})={\rm supp}\,y_{2}\setminus {l}(y_{2})$. Thus, 
$y_2=\pi_2 x=\pi_2 \pi_1^{-1} y_1$.
\begin{align*}
\pi_{2}{\pi_{1}}^{-1}({\rm supp}\,y_{1}\setminus {l}(y_{1}))&=\pi_{2}{\pi_{1}}^{-1}({\rm supp}\,x\setminus {l}(x))\\
&=\pi_{2}({\rm supp}\,x\setminus {l}(x))\\
&={\rm supp}\,x\setminus {l}(x)\\
&={\rm supp}\,y_{1}\setminus {l}(y_{1}).
\end{align*}
Therefore $(y_{1},y_{2})\in \equiv_{l}$ and $ \equiv_{l}\cap R\neq\emptyset$.  
\end{proof}

\begin{corollary}
Suppose $X$ is a nominal set endowed with a binding operator $l$ and $ R\in \mathcal{R}(X, X)$ is quivariant. If  $\Delta_{_X}\cap(\sigma_{{ \equiv_{l}}})_{_X}(R)\neq\emptyset$, then $\equiv_{l}\cap R\neq\emptyset$. 
\end{corollary}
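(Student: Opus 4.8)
The plan is to reduce the statement immediately to Proposition \ref{marrry}. First I would unwind the hypothesis $\Delta_{_X}\cap(\sigma_{{ \equiv_{l}}})_{_X}(R)\neq\emptyset$: every element of the diagonal relation $\Delta_{_X}=\{(x,x):x\in X\}$ has the form $(x,x)$ for some $x\in X$, so the assumption is equivalent to saying that there exists $x\in X$ with $(x,x)\in(\sigma_{{ \equiv_{l}}})_{_X}(R)$. This is the only translation step, and it is purely formal.

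Once this observation is in place, the conclusion $\equiv_{l}\cap R\neq\emptyset$ follows by a direct application of Proposition \ref{marrry}, whose hypothesis is precisely the existence of such an $x$. There is no genuine obstacle here; the corollary is essentially a restatement of Proposition \ref{marrry} with the hypothesis phrased in terms of an intersection with $\Delta_{_X}$ rather than in terms of an element $x$. The main point to verify is just the trivial equivalence between a pair lying in $\Delta_{_X}$ and its two coordinates being equal.
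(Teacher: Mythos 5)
Your proposal is correct and matches the paper's own proof, which simply cites Proposition \ref{marrry}; the only content is the trivial observation that a nonempty intersection with $\Delta_{_X}$ yields an element of the form $(x,x)$ in $(\sigma_{\equiv_{l}})_{_X}(R)$, which is exactly the hypothesis of that proposition.
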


\begin{proof}
It follows from Proposition \ref{marrry}.
\end{proof}

\begin{proposition}
Suppose $X$ is a nominal set endowed with a binding operator $l$. If $ R=\{(x,x'):{\rm supp}\,x\setminus  {l}(x)={\rm supp}\,x'\setminus  {l}(x')\}$, then

{\rm (i)}  $\equiv_{l}\subseteq R$.

{\rm (ii)} moreover, if $R$ is injective, then ${(\sigma_{R})}_{_S}(\equiv_{l}\cap (X\times\overrightarrow{R}(S))\subseteq \equiv_{l}$. 
\end{proposition}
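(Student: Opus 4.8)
The plan for part (i) is immediate: if $(x,x')\in \equiv_{l}$ then Remark~\ref{lewanh}(ii) gives ${\rm supp}\,x\setminus l(x)={\rm supp}\,x'\setminus l(x')$, which is exactly the defining condition of $R$, so $(x,x')\in R$; hence $\equiv_{l}\subseteq R$.

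For part (ii) the first observation is that $R$ is reflexive, since its defining equality ${\rm supp}\,x\setminus l(x)={\rm supp}\,x\setminus l(x)$ holds for every $x$. The key step is then the following collapse: an injective reflexive relation on $X$ must coincide with $\Delta_{X}$. Indeed, given $(a,b)\in R$, reflexivity gives $(b,b)\in R$; since $(a,b)$ and $(b,b)$ have the same second coordinate, injectivity of $R$ forces $a=b$, so $R\subseteq \Delta_{X}$, and reflexivity gives the reverse inclusion. Combining this with part (i) and the reflexivity of the equivalence relation $\equiv_{l}$ (Remark~\ref{lewanh}(i)) yields $\Delta_{X}\subseteq \equiv_{l}\subseteq R=\Delta_{X}$, whence $\equiv_{l}=\Delta_{X}$ as well.

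With this in hand the computation is routine. From $R=\Delta_{X}$ we get $\overrightarrow{R}(S)=S$, so $\equiv_{l}\cap(X\times\overrightarrow{R}(S))=\Delta_{X}\cap(X\times S)=\Delta_{S}$. Applying the explicit description of ${(\sigma_{R})}_{_S}$ from Proposition~\ref{prop define deterministic}, a pair $(x,s)$ lies in ${(\sigma_{R})}_{_S}(\Delta_{S})$ exactly when there are $y_{1}\in X$ and $y_{2}\in S$ with $(x,y_{1})\in R=\Delta_{X}$, $(s,y_{2})\in R=\Delta_{X}$ and $(y_{1},y_{2})\in\Delta_{S}$; these equalities force $x=y_{1}=y_{2}=s$. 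Hence ${(\sigma_{R})}_{_S}(\Delta_{S})=\Delta_{S}\subseteq\Delta_{X}=\equiv_{l}$, which is the claimed inclusion.

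The only real point to be careful about is the interpretation of the hypothesis ``$R$ injective'': because $R$ is an equivalence relation (in particular reflexive), injectivity forces $R$ --- and, via (i), also $\equiv_{l}$ --- to degenerate to the identity relation, and I would make this observation explicit rather than attempt an argument keeping $R$ genuinely nontrivial. An essentially equivalent route postpones mentioning the collapse: unpack $(x,s)\in{(\sigma_{R})}_{_S}(\equiv_{l}\cap(X\times\overrightarrow{R}(S)))$ as above to obtain $y_{1},y_{2}$ with $(x,y_{1}),(s,y_{2})\in R$ and $(y_{1},y_{2})\in\equiv_{l}\subseteq R$; injectivity of $R$ applied to $(y_{1},y_{2}),(s,y_{2})\in R$ gives $y_{1}=s$, so $(x,s)\in R$, and then $R=\Delta_{X}$ finishes the job.
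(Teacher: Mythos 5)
Your proof is correct, but it takes a genuinely different route from the paper's. Part (i) is identical. For part (ii), the paper argues directly: it unpacks $(x,s)\in{(\sigma_{R})}_{_S}(\equiv_{l}\cap(X\times\overrightarrow{R}(S)))$ to get $y_1,y_2$ with $(x,y_1),(s,y_2)\in R$ and $(y_1,y_2)\in\equiv_l$, extracts the permutation $\pi$ with $y_2=\pi y_1$ and $\pi\sharp\,{\rm supp}\,y_1\setminus l(y_1)$, transfers the freshness condition to $x$ via the defining equality of $R$, uses equivariance of $R$ to get $(\pi x,y_2)\in R$, and then injectivity to conclude $\pi x=s$, exhibiting $(x,s)\in\equiv_l$ directly from Definition \ref{bin}. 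You instead observe that this particular $R$ is reflexive (indeed an equivalence relation), so the injectivity hypothesis collapses it to $\Delta_X$, and then $\Delta_X\subseteq\equiv_l\subseteq R=\Delta_X$ trivializes everything; your verification that ${(\sigma_{R})}_{_S}(\Delta_S)=\Delta_S$ is also correct. Your route is shorter and exposes something the paper does not acknowledge: the hypothesis of (ii) is so restrictive that the conclusion holds degenerately, which somewhat undercuts the proposition's interest. The paper's argument, by contrast, never uses reflexivity of $R$ and would apply verbatim to any injective equivariant relation $R'$ satisfying $(x,y)\in R'\Rightarrow{\rm supp}\,x\setminus l(x)={\rm supp}\,y\setminus l(y)$ (i.e., $R'\subseteq R$), where no collapse occurs; in that sense it proves a more robust statement than the one literally asserted. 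Both proofs are valid as written.
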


\begin{proof}
{\rm (i)} Let $(x,x')\in \equiv_{l}$. Then, ${\rm supp}\,x\setminus {l}(x)={\rm supp}\,x'\setminus  {l}(x')$ and so $(x,x')\in R$.

\medskip

{\rm (ii)} Let $(x,s)\in {(\sigma_{R})}_{_S}(\equiv_{l}\cap (X\times\overrightarrow{R}(S)))$. Then,
$$(x,s)\in {(\sigma_{R})}_{_S}(\equiv_{l}\cap (X\times\overrightarrow{R}(S)))\Leftrightarrow\exists y_1\in X, y_2 \in \overrightarrow{R}(S); 
\vcenter{\xymatrix @-1.5pc @ur { x\ar[d]_{R}&s\ar[d]^{R}\\
  y_1\ar[r]_{ \equiv_{l}\cap (X\times\overrightarrow{R}(S)))} & y_{2}.}}
$$
 Since $(y_{1}, y_{2})\in \equiv_{l}\cap (X\times\overrightarrow{R}(S)))$, there exists $\pi\in {\rm Perm}(\mathbb{D})$ with $y_{2}=\pi y_{1}$ and $\pi \sharp {\rm supp}\, y_{1}\setminus {l}( y_{1})$. Also, since $(x, y_1)\in R$, by definition of $R$, we have ${\rm supp}\,y_1\setminus l(y_1)={\rm supp}\,x\setminus l(x)$. Thus we have $\pi \sharp {\rm supp}\, x\setminus {l}( x)$. Since $(x, y_1)\in R$ and $R$ is equivariant, $(\pi x, y_2)=(\pi x, \pi y_1)\in R$. Now, since $(\pi x, y_2), (s, y_2)\in R$ and $R$ is injetive, we get $\pi x=s$.
Therefore, $\pi x=s$ and $\pi \sharp {\rm supp}\,x\setminus l(x)$ and so $(x,s)\in \equiv_{l}$.
\end{proof}

We recall from \cite{Pitts2} that an equivalence relation $\rho$ over a nominal set $X$ is called \emph{an equivariant equivalence relation (or congruence)} on $X$, whenever $\rho$ is equivariant as a subset of $X\times X$. We also recall that if $X$ and $Y$ are nominal sets, then $(X\times Y)/\mathord\approx$ is a nominal set where $\approx$ is a congruence on $X\times Y$ defined by 
$$(x,y) \approx (x',y') \,  \Longleftrightarrow \, \pi(x,y)=(x',y'),$$
for some $\pi \in {\rm Perm}(\mathbb{D})$ with $\pi \sharp ({\rm supp}\,y\setminus{\rm supp}\,x).$ We also have ${\rm supp}\,y\setminus{\rm supp}\,x={\rm supp}\,y'\setminus{\rm supp}\,x'$ and  the equivalence class of $(x,y)$ denoted by $(x,y)/\mathord\approx$ where ${\rm supp}\,(x,y)/\mathord\approx={\rm supp}\,y\setminus{\rm supp}\,x$.

\begin{example}
\sloppypar	For every $X\in \mathbf{Nom}$ and $i:Y\hookrightarrow Y'\in {\rm Eqsub}(X)$, the assignment $\epsilon^{^{eq}}_X~:{\rm Eqsub}(X)\to \mathbf{Nom}$ defined by 
$\epsilon^{^{eq}}_{_{X}}(Y)=(X\times Y)/\mathord\approx$ and $\epsilon^{^{eq}}_{_{X}} (i)[(x,y)/\mathord\approx]=(x,i(y))/\mathord\approx$
is a functor.
\end{example}

\begin{remark}
Suppose $X$ is a nominal set and $T\in {\rm Eqsub}(X)$. If $R\in \mathcal{R}(X, X)$ is an equivariant relation, then $\overrightarrow{R}(T)$ is a nominal set.
\end{remark}

\begin{proposition}
Let $X$ be a nominal set. Then, each injective equivariant relation $R\in \mathcal{R}(X, X)$ determines a natural deterministic morphism.
\end{proposition}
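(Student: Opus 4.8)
The plan is to take $\epsilon_X$ to be the functor $\epsilon^{eq}_X$ of the preceding example (so the content goes beyond Proposition \ref{prop define deterministic}, which dealt with $\epsilon_X=\mathcal{R}_{\rm fs}(X,-)$ and needed no injectivity): a natural deterministic morphism determined by $R$ will be a pair $(R,\sigma)$ with $\sigma\colon\epsilon^{eq}_X\circ\overrightarrow{R}\to\epsilon^{eq}_X$ a natural transformation of functors ${\rm Eqsub}(X)\to\mathbf{Nom}$. Two preliminary points fix the setting: $R$ is equivariant, hence ${\rm supp}\,R=\emptyset$ and $R\in\mathcal{R}_{\rm fs}(X,X)$; and by Corollary \ref{n}(i) the map $\overrightarrow{R}$ sends equivariant subsets of $X$ to equivariant subsets of $X$, so it restricts to $\overrightarrow{R}\colon{\rm Eqsub}(X)\to{\rm Eqsub}(X)$ and $\epsilon^{eq}_X(\overrightarrow{R}(Y))=(X\times\overrightarrow{R}(Y))/\mathord\approx$ is a well-defined nominal set for every $Y\in{\rm Eqsub}(X)$.

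The construction of $\sigma$ is where injectivity of $R$ enters. For $y'\in{\rm Im}\,R$ the set $\overleftarrow{R}(y')$ is a singleton, and if $y'\in\overrightarrow{R}(Y)$ this singleton lies in $Y$; I would therefore define the component $\sigma_Y\colon\epsilon^{eq}_X(\overrightarrow{R}(Y))\to\epsilon^{eq}_X(Y)$ by
$$\sigma_Y\bigl((x,y')/\mathord\approx\bigr)=(x,y)/\mathord\approx\qquad\text{where }\{y\}=\overleftarrow{R}(y'),$$
which is a genuine single-valued map precisely because $\overleftarrow{R}(y')$ has exactly one element.

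The main work is to check that $\sigma_Y$ is well-defined and equivariant. For well-definedness, suppose $\pi(x,y')=(x_1,y_1')$ with $\pi\in{\rm Perm}(\mathbb{D})$ and $\pi\sharp({\rm supp}\,y'\setminus{\rm supp}\,x)$. By Theorem \ref{rrr}(iii) we have $\overleftarrow{R}(\pi y')=\pi\,\overleftarrow{R}(y')$, so the element attached to $(x_1,y_1')$ is $y_1=\pi y$. Applying Corollary \ref{yhm}(ii) to $(y,y')\in R$ gives ${\rm supp}\,y\subseteq{\rm supp}\,y'$, hence ${\rm supp}\,y\setminus{\rm supp}\,x\subseteq{\rm supp}\,y'\setminus{\rm supp}\,x$ and therefore $\pi\sharp({\rm supp}\,y\setminus{\rm supp}\,x)$; consequently $(x,y)\approx(\pi x,\pi y)=(x_1,y_1)$, so $\sigma_Y$ is well-defined. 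The same identity from Theorem \ref{rrr}(iii) yields $\sigma_Y(\pi\cdot(x,y')/\mathord\approx)=\pi\cdot\sigma_Y((x,y')/\mathord\approx)$, so each $\sigma_Y$ is a morphism of $\mathbf{Nom}$.

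Naturality is then formal: for an inclusion $i\colon Y\hookrightarrow Y'$ in ${\rm Eqsub}(X)$ the map $\overrightarrow{R}(i)$ is again an inclusion $\overrightarrow{R}(Y)\hookrightarrow\overrightarrow{R}(Y')$, both $\epsilon^{eq}_X(i)$ and $\epsilon^{eq}_X(\overrightarrow{R}(i))$ act identically on class representatives, and $\overleftarrow{R}(y')$ is computed in $X$ irrespective of whether $y'$ is viewed in $\overrightarrow{R}(Y)$ or in $\overrightarrow{R}(Y')$; hence $\sigma_{Y'}\circ\epsilon^{eq}_X(\overrightarrow{R}(i))=\epsilon^{eq}_X(i)\circ\sigma_Y$. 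Thus $\sigma=(\sigma_Y)_{Y\in{\rm Eqsub}(X)}$ is a natural transformation and $(R,\sigma)$ is a natural deterministic morphism. The only step that uses anything beyond bookkeeping is the well-definedness of $\sigma_Y$, and there the support inclusion of Corollary \ref{yhm}(ii) is exactly what forces the $\approx$-classes to match; this is the point I expect to be the crux.
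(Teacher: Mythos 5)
Your proof is correct and follows essentially the same route as the paper's: both use the functor $\epsilon^{eq}_X$ on ${\rm Eqsub}(X)$, define $\sigma_Y$ by sending $(x,y')/\mathord\approx$ to $(x,y)/\mathord\approx$ with $y$ the unique $R$-preimage of $y'$, and identify Corollary \ref{yhm}(ii) (${\rm supp}\,y\subseteq{\rm supp}\,y'$) as the key point making the $\approx$-classes match. The only cosmetic difference is that you invoke Theorem \ref{rrr}(iii) for $\overleftarrow{R}(\pi y')=\pi\,\overleftarrow{R}(y')$ where the paper argues the same fact directly from equivariance and injectivity.
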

\begin{proof} 
Let $T$ be an equivariant subset of $X$ and $z\in \overrightarrow{R}(T)$. Since $R$ is injective, there exists unique $t\in T$ with $(t,z)\in R$.
Define $({(\sigma_{R})}_{_T})_{_ {T\in {\rm Eqsub}(X)}}$ in which ${(\sigma_{R})}_{_T}: \epsilon_X^{^{eq}}(\overrightarrow{R}(T)) \to \epsilon^{^{eq}}_X(T)$ assigns every $(x,z)/\mathord\approx\in (X\times \overrightarrow{R}(T))/\mathord\approx$ to $(x,t)/\approx\in (X\times T)/\mathord\approx$. We show that ${(\sigma_{R})}_{_T}$'s well-defined. To do so, let $(x,z)/\mathord\approx=(x',z')/\mathord\approx$ where $z, z'\in \overrightarrow{R}(T)$. Then, we show that there exists $\pi \in {\rm Perm}(\mathbb{D})$ with $\pi (x,t)=(x',t')$ and $\pi \sharp {\rm supp}\,t-{\rm supp}\,x$.
The assumption $(x,z)/\mathord\approx=(x',z')/\mathord\approx$ implies that there exists $\pi \in {\rm Perm}(\mathbb{D})$ with $\pi (x,z)=(x',z')$ and $\pi \sharp ({\rm supp}\,z-{\rm supp}\,x)$.
 Since $R$ is equivariant and injective, we get
 $$(t, z)\in R \Rightarrow (\pi t, z')=(\pi t, \pi z)\in R, \, \ \ (t',z'), (\pi t, z')\in R \Rightarrow \pi t=t'.$$ Thus, $\pi (x,t)=(x', t')$.
Also, since $R$ is injective, by Corollary \ref{yhm}(ii), we have ${\rm supp}\,t\subseteq {\rm supp}\,z$. Thus, ${\rm supp}\,t-{\rm supp}\,x\subseteq {\rm supp}\,z-{\rm supp}\,x$ and so
$\pi \sharp ({\rm supp}\,t-{\rm supp}\,x)$.

Now, we show that $({(\sigma_{R})}_{_T})_{_{T\in {\rm Eqsub}(X)}}$'s are equivariant. Let $\pi \in {\rm Perm}(\mathbb{D})$ and
$\pi {(\sigma_{R})}_{_T}(x,z)/\mathord\approx=(\pi x, \pi t)/\mathord\approx$. Then, we show ${(\sigma_{R})}_{_T}(\pi x, \pi z)/\mathord\approx=(\pi x, \pi t)/\mathord\approx$. Notice that, $z\in \overrightarrow{R}(T)$ and $(t, z)\in R$.
Since $\pi z\in \overrightarrow{R}(T)$ and $R$ is injective, there exists unique $t'\in T$ with $(t', \pi z)\in R$. Take ${(\sigma_{R})}_{_T}(\pi x, \pi z)/\mathord\approx=(\pi x, t')/\mathord\approx$.
On the other hand, since $(t, z)\in R$ and $R$ is equivariant, we have $(\pi t, \pi z)\in R$. Now, since $R$ is injective and $(t', \pi z), (\pi t, \pi z)\in R$, we get that $\pi t=t'$. Thus,
$\pi {(\sigma_{R})}_{_T}(x,z)/\mathord\approx=(\pi x, \pi t)/\mathord\approx=(\pi x, t')/\mathord\approx={(\sigma_{R})}_{_T}(\pi x, \pi z)/\mathord\approx$.
The naturality of $\sigma_R$ is obtained easily.
\end{proof}
\medskip

\subsection{Stochastic maps between nominal sets}

In this subsection, we introduce another morphism in the category ${\bf Rel}({\bf Nom})$, see the following definition.
\begin{definition}
Let $X$ and $Y$ be two nominal sets. A \emph{stochastic map} is a natural deterministic morphism $(f, \sigma)$ in which $f: X\longrightarrow Y$ is a finitely supported map and $\sigma: \epsilon_Y \circ f\to \epsilon_X$ is a natural transformation.

\end{definition}

It is worth noting that, by Proposition \ref{prop define deterministic}, every equivariant map $f:X\longrightarrow Y$ determines the stochastic map $(f, \sigma_f)$. In this subsection, we focus  on a stochastic map.

\begin{example}
	 Given each nominal set $X$, the support map ${\rm supp}:X\longrightarrow \mathcal{P}_{\rm f}(\mathbb{D})$ gives rise a stochastic map $({\rm supp}, \sigma_{\rm supp})$.
	\end{example}

\begin{theorem}\label{th for delta D}
Given a non-discrete nominal set $X$, then

{\rm (i)} $x\sharp y$ if and only if $(x,y)\notin {(\sigma_{\rm supp})}_{_ X}(\Delta_{\mathbb{D}})$.

{\rm (ii)} ${(\sigma_{\rm supp})}_{_ X}(\Delta_{\mathbb{D}})=X\times X\setminus \sharp_{_X}$.
\end{theorem}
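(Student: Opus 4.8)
The plan is to evaluate $(\sigma_{\rm supp})_{_X}$, the natural deterministic morphism attached by Proposition \ref{prop define deterministic} to the equivariant support relation ${\rm supp}: X\longrightarrow\mathbb{D}$ of Example \ref{suprel}(iii), at the level $S=X$, and then to unwind the resulting relation directly against the definition of $\sharp_{_X}$.

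First I would check that $(\sigma_{\rm supp})_{_X}(\Delta_{\mathbb{D}})$ even makes sense, which is where non-discreteness enters. Since $X$ is not discrete, some element $x_0$ fails to be zero, so ${\rm supp}\,x_0\neq\emptyset$; fix $d\in{\rm supp}\,x_0$. For an arbitrary $d'\in\mathbb{D}$, applying the permutation $(d\ d')$ (or the identity when $d=d'$) and using $\pi\,{\rm supp}\,x_0={\rm supp}\,\pi x_0$ from Remark \ref{supp is equi.}, we get $d'\in{\rm supp}\big((d\ d')x_0\big)\subseteq\overrightarrow{{\rm supp}}(X)$. Hence $\overrightarrow{{\rm supp}}(X)=\mathbb{D}$, so $\Delta_{\mathbb{D}}$, being an equivariant (hence finitely supported) relation from $\mathbb{D}$ to $\overrightarrow{{\rm supp}}(X)=\mathbb{D}$, is a genuine element of $\epsilon_{\mathbb{D}}\big(\overrightarrow{{\rm supp}}(X)\big)$, and $(\sigma_{\rm supp})_{_X}(\Delta_{\mathbb{D}})$ is a well-defined relation on $X$.

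Next I would compute it. By the defining formula in Proposition \ref{prop define deterministic}, $(x,y)\in(\sigma_{\rm supp})_{_X}(\Delta_{\mathbb{D}})$ iff there exist $d_1\in\mathbb{D}$ and $d_2\in\overrightarrow{{\rm supp}}(X)=\mathbb{D}$ with $(x,d_1)\in{\rm supp}$, $(y,d_2)\in{\rm supp}$ and $(d_1,d_2)\in\Delta_{\mathbb{D}}$; since $(x,d_1)\in{\rm supp}$ means $d_1\in{\rm supp}\,x$, $(y,d_2)\in{\rm supp}$ means $d_2\in{\rm supp}\,y$, and $(d_1,d_2)\in\Delta_{\mathbb{D}}$ forces $d_1=d_2$, this amounts to the existence of a single atom $d\in{\rm supp}\,x\cap{\rm supp}\,y$. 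Thus $(x,y)\in(\sigma_{\rm supp})_{_X}(\Delta_{\mathbb{D}})$ if and only if ${\rm supp}\,x\cap{\rm supp}\,y\neq\emptyset$.

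Finally, by the definition of the freshness relation in Example \ref{suprel}(iv), $x\sharp y$ (that is, $(x,y)\in\sharp_{_X}$) means precisely ${\rm supp}\,x\cap{\rm supp}\,y=\emptyset$, so the equivalence just obtained reads $(x,y)\in(\sigma_{\rm supp})_{_X}(\Delta_{\mathbb{D}})$ iff $(x,y)\notin\sharp_{_X}$, which is statement (i); and (ii) follows at once by taking complements inside $X\times X$. The argument is essentially definition-chasing, and the only step requiring a small amount of care is the identity $\overrightarrow{{\rm supp}}(X)=\mathbb{D}$ — this is exactly the role played by the non-discreteness hypothesis, since if $X$ were discrete we would have $\overrightarrow{{\rm supp}}(X)=\emptyset$ and $\Delta_{\mathbb{D}}$ would not even lie in the domain of $(\sigma_{\rm supp})_{_X}$.
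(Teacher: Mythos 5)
Your proof is correct and follows essentially the same route as the paper's: unwind the defining square of Proposition \ref{prop define deterministic} for $R={\rm supp}$ at $S=X$, observe that $(d_1,d_2)\in\Delta_{\mathbb{D}}$ forces a common atom in ${\rm supp}\,x\cap{\rm supp}\,y$, and compare with the definition of $\sharp_{_X}$. Your preliminary verification that $\overrightarrow{{\rm supp}}(X)=\mathbb{D}$ under the non-discreteness hypothesis is a welcome extra step that the paper leaves implicit, and it correctly identifies the only place that hypothesis is actually used.
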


\begin{proof}
Let $f={\rm supp}$. Then, applying Proposition \ref{prop define deterministic}, we have:
$$(x,y)\in {(\sigma_{f})}_{_X}(\Delta_{\mathbb{D}})\Leftrightarrow\text{ there exists } d_1\in \overrightarrow{R}(X)\text{ such that }\vcenter{\xymatrix @-1.5pc @ur { x\ar@<1ex>[d]_{f}&y\ar@<1ex>[d]^{f}\\
  d_1\ar[r]_{\Delta_{\mathbb{D}}} &d_1.}}
$$
So, $(x,y)\in {(\sigma_{f})}_{_X}(\Delta_{\mathbb{D}}) \Leftrightarrow d_1\in {\rm supp}\,x \cap {\rm supp}\,y$.

\medskip

{\rm (ii)}  We have:
 
\begin{align*}
 {(\sigma_{\rm supp})}_{_ X}(\Delta_{\mathbb{D}}) &= \{(x, y): {\rm supp}\,x \cap {\rm supp}\, y\neq \emptyset\} \\
& = \{(x,y): (x,y)\notin \sharp\}\\
& =X\times X\setminus \sharp_{_X}.\tag*{\qedhere}
\end{align*}
\end{proof}

\begin{corollary}
Let $f:X\longrightarrow X$ be  in $\mathbf{Nom}$. Then $f$ is  injective  if and only if $  {(\sigma_{f})}_{_X}(\Delta_{X} )=\Delta_{X}$.
\end{corollary}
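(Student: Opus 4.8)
The plan is to reduce the statement to Corollary \ref{iff} by viewing the equivariant map $f$ as its underlying equivariant relation. Write $R_f=\{(x,y)\in X\times X: f(x)=y\}$ for the image of $f$ under the inclusion functor $I:\mathbf{Nom}\hookrightarrow\mathbf{Rel}(\mathbf{Nom})$; this is an equivariant relation on $X$, and by the very definition of the stochastic map attached to $f$ we have $\sigma_f=\sigma_{R_f}$, so that ${(\sigma_f)}_{_X}(\Delta_X)={(\sigma_{R_f})}_{_X}(\Delta_X)$.

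First I would record the elementary dictionary between $f$ and $R_f$. Since $f$ is an everywhere-defined function, ${\rm Dom}\,R_f=X$; and $f$ is injective as a map precisely when $R_f$ is injective as a relation in the sense of Definition \ref{zzzz}(i), because the equation $f(x)=f(x')$ says exactly that some $y\in X$ satisfies $(x,y),(x',y)\in R_f$. Hence $f$ is injective if and only if $R_f$ is total injective in the sense of Definition \ref{zzzz}(iii).

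Then I would simply invoke Corollary \ref{iff} with $R=R_f$: the relation $R_f$ is total injective if and only if ${(\sigma_{R_f})}_{_X}(\Delta_X)=\Delta_X$. Combining this with the previous paragraph and the identification $\sigma_f=\sigma_{R_f}$ yields the claim. Alternatively, one can argue directly from Proposition \ref{prop define deterministic} and Proposition \ref{tgb}: totality of $f$ gives the inclusion $\Delta_X\subseteq{(\sigma_f)}_{_X}(\Delta_X)$ unconditionally, since for each $x\in X$ the pair of $f$-arrows $x\mapsto f(x)$ together with $(f(x),f(x))\in\Delta_X$ witnesses $(x,x)\in{(\sigma_f)}_{_X}(\Delta_X)$, while Proposition \ref{tgb} supplies the reverse inclusion ${(\sigma_f)}_{_X}(\Delta_X)\subseteq\Delta_X$ exactly when $R_f$ is injective, i.e.\ when $f$ is injective.

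There is no genuine obstacle here; the statement is a packaging of results already in hand. The only points requiring a moment's care are the bookkeeping identifications: that injectivity of the map $f$ coincides with injectivity of the relation $R_f$, and that the everywhere-definedness of $f$ is precisely what furnishes the inclusion $\Delta_X\subseteq{(\sigma_f)}_{_X}(\Delta_X)$ needed to upgrade the one-sided containment of Proposition \ref{tgb} to the desired equality.
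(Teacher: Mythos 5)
Your argument is correct and is essentially the paper's own proof, which simply cites Corollary \ref{iff}; you have merely spelled out the routine dictionary (totality of $R_f$ from everywhere-definedness of $f$, and injectivity of $f$ matching injectivity of $R_f$) that makes that citation legitimate. No issues.
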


\begin{proof}
	It follows from Corollary \ref{iff}.
\end{proof}

	

	\begin{proposition}
		Let  $f:X\longrightarrow Y$ be an equivariant surjective map between nominal sets.  If $(\sigma_f)_{_A}$ maps every reflexive relation $\rho\in \mathcal{R}_{\rm fs}(Y,Y)$ to a subset of $\Delta_{A}$, for every  $A\in \mathcal{P}_{\rm fs}(X)$, then $f$ is an isomorphism.
	\end{proposition}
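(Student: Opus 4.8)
The plan is to feed the diagonal relation on $Y$ into the hypothesis, read off injectivity of $f$, and then combine this with the assumed surjectivity and equivariance. Since $f$ is equivariant, its underlying relation $R_f=\{(x,f(x)):x\in X\}$ is equivariant, and by Proposition~\ref{prop define deterministic} the associated natural deterministic morphism is $(f,\sigma_f)$ with $\sigma_f:=\sigma_{R_f}$, so that $(\sigma_f)_S:\epsilon_Y(f(S))\to\epsilon_X(S)$ for $S\in\mathcal{P}_{\rm fs}(X)$. Because $f$ is an everywhere-defined, single-valued map we have $\overrightarrow{R_f}(x)=\{f(x)\}$ for every $x$, so the defining triangle of Proposition~\ref{prop define deterministic} collapses to
$$(x,x')\in(\sigma_f)_S(\rho)\ \Longleftrightarrow\ x'\in S\ \text{ and }\ (f(x),f(x'))\in\rho ,$$
for every $\rho\in\mathcal{R}_{\rm fs}(Y,f(S))$.

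Next I would specialise to $S=X$. Since $f$ is surjective, $f(X)=Y$, so $(\sigma_f)_X$ is a map $\mathcal{R}_{\rm fs}(Y,Y)\to\mathcal{R}_{\rm fs}(X,X)$, and the diagonal $\Delta_Y$ — which is equivariant, hence finitely supported, and is reflexive — lies in its domain. By the displayed equivalence,
$$(\sigma_f)_X(\Delta_Y)=\{(x,x')\in X\times X:f(x)=f(x')\}.$$
Applying the hypothesis with $A=X$ and $\rho=\Delta_Y$ gives $(\sigma_f)_X(\Delta_Y)\subseteq\Delta_X$, i.e.\ $f(x)=f(x')$ forces $x=x'$; hence $f$ is injective. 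Being also surjective, $f$ is an equivariant bijection, and its set-theoretic inverse is again equivariant (from $f(\pi x)=\pi f(x)$ one gets $\pi f^{-1}(y)=f^{-1}(\pi y)$ on putting $y=f(x)$), so $f^{-1}$ is a morphism of ${\bf Nom}$ and $f$ is an isomorphism in ${\bf Nom}$, hence also, via the inclusion functor $I$, in ${\bf Rel}({\bf Nom})$.

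No step here is genuinely difficult. The two points that need a moment's care are the collapse of the $\sigma_f$-formula to the displayed equivalence when $f$ is a bona fide map (using $\overrightarrow{R_f}(x)=\{f(x)\}$), and the observation that surjectivity is exactly what makes $f(X)=Y$, so that $\Delta_Y$ is an admissible argument of $(\sigma_f)_X$ and the hypothesis can be invoked. The degenerate case $Y=\emptyset$ forces $X=\emptyset$ and $f$ is then trivially an isomorphism, so one may assume $Y\neq\emptyset$ throughout.
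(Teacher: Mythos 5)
Your proposal is correct and follows essentially the same route as the paper: both derive injectivity by feeding a reflexive finitely supported relation on $Y$ (you choose $\Delta_Y$; the paper uses an arbitrary reflexive $\rho$) into the hypothesis, observing that two preimages $x,x'$ of a common value yield $(x,x')\in(\sigma_f)_X(\rho)\subseteq\Delta_X$, and then combining injectivity with the assumed surjectivity and equivariance. Your version is somewhat more careful than the paper's, in that you explicitly check that surjectivity makes $\Delta_Y$ an admissible argument of $(\sigma_f)_X$ and that the inverse of an equivariant bijection is equivariant, but the underlying argument is the same.
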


	\begin{proof}
	 To show that $f$ is injective,  we note that  $(\sigma_f)_{_X}(\rho)\subseteq\Delta_{A}$.  If $(x, y), (x',y)\in   f$ then, by the diagram $\vcenter{\xymatrix @-1.5pc @ur { x\ar@<1ex>[d]_{ f}&x'\ar@<1ex>[d]^{ f}\\
				y\ar[r]_{\rho} & y}}$, we have $(x,x')\in (\sigma_f)_{_X}(\rho )\subseteq\Delta_{A}$ and $x=x'$. Therefore $f$ is an  isomorphism.
	\end{proof}

	\begin{proposition}\label{mapp1}
		Let $f:X\longrightarrow X$ be  in $\mathbf{Nom}$,  such that   $ f\vert_{_A}$ is  bijective where $A\in \mathcal{P}_{\rm fs}(X)$. Then  ${(\sigma_{{f\vert_{_{_A}}}})}_{_A}(\rho)\cap \Delta_{A}\neq\emptyset$ if and only if   $\rho\cap  \Delta_{Y}\neq\emptyset$, for every $\rho \in \mathcal{R}_{{\rm fs}}(Y,Y)$.
	\end{proposition}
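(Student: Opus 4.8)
The plan is to reduce the whole statement to the explicit description of the natural deterministic morphism furnished by Proposition \ref{prop define deterministic}, and then to exploit the fact that $f\vert_{A}$ is (the underlying relation of) a function. First I would record, for $R:=f\vert_{A}\in\mathcal{R}_{\rm fs}(X,X)$ — which is finitely supported by ${\rm supp}\,A$, since $f\in{\bf Nom}$ is equivariant, so that ${(\sigma_{f\vert_{A}})}_{_A}$ makes sense — and $S:=A$, that $\overrightarrow{R}(A)=f(A)$, and that a pair $(x,y_1)\in f\vert_{A}$ forces $x\in A$ and $y_1=f(x)$ (likewise $(s,y_2)\in f\vert_{A}$ forces $s\in A$, $y_2=f(s)$). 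Plugging this into the formula of Proposition \ref{prop define deterministic} collapses the two existential witnesses and gives the characterization
$$(x,s)\in {(\sigma_{f\vert_{A}})}_{_A}(\rho)\ \Longleftrightarrow\ x,s\in A\ \text{ and }\ (f(x),f(s))\in\rho .$$
After this, the proposition is just a matter of reading off the diagonal instances of this equivalence.

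With the characterization in hand, the forward direction is immediate: if $(x,x)\in {(\sigma_{f\vert_{A}})}_{_A}(\rho)\cap\Delta_{A}$, then $(f(x),f(x))\in\rho$, and since $f(x)\in\overrightarrow{f\vert_{A}}(A)$ this exhibits a diagonal element of $\rho$, so $\rho\cap\Delta_{Y}\neq\emptyset$. For the converse I would start from some $(y,y)\in\rho$ and note that, because $\rho$ lies in the domain $\epsilon_{Y}(\overrightarrow{f\vert_{A}}(A))$ of ${(\sigma_{f\vert_{A}})}_{_A}$, all of its second coordinates lie in $\overrightarrow{f\vert_{A}}(A)=f(A)$; in particular $y\in f(A)$. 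Bijectivity of $f\vert_{A}$ then supplies the (unique) $x\in A$ with $f(x)=y$, whence $(f(x),f(x))=(y,y)\in\rho$, and the characterization gives $(x,x)\in {(\sigma_{f\vert_{A}})}_{_A}(\rho)$; since $(x,x)\in\Delta_{A}$ we get ${(\sigma_{f\vert_{A}})}_{_A}(\rho)\cap\Delta_{A}\neq\emptyset$.

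The step I expect to be the crux is the ``codomain bookkeeping'' in the converse: one must be sure the witnessing diagonal element $(y,y)$ of $\rho$ really has $y$ in the image $f(A)$, since otherwise there is no element of $A$ to pull it back along $f\vert_{A}$. This is exactly where both hypotheses are used — bijectivity of $f\vert_{A}$ produces the pullback $x$, while the fact that $\rho$ is a relation into $\overrightarrow{f\vert_{A}}(A)$ (built into Definition \ref{fs functor epsilon} and Proposition \ref{prop define deterministic}) guarantees $y\in f(A)$. Everything else is routine unwinding of definitions; in particular, that ${(\sigma_{f\vert_{A}})}_{_A}$ is well defined and that ${(\sigma_{f\vert_{A}})}_{_A}(\rho)$ is again finitely supported are already covered by Note \ref{exmple deter morphism}(i) and Proposition \ref{mohemm}, so no extra argument is needed there.
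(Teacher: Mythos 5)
Your proposal is correct and follows essentially the same route as the paper: unwind the definition of ${(\sigma_{f\vert_{A}})}_{_A}$ from Proposition \ref{prop define deterministic}, use the fact that $f\vert_{A}$ is a function to collapse the existential witnesses to $f(x)$ and $f(s)$, and invoke bijectivity of $f\vert_{A}$ to pull a diagonal element of $\rho$ back to a diagonal element over $A$. Your extra care about why the witness $y$ lies in $f(A)=\overrightarrow{f\vert_{A}}(A)$ is a welcome clarification of a point the paper's proof passes over silently, but it does not change the argument.
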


	\begin{proof}
		Suppose $ {(\sigma_{{f\vert_{_{_A}}}})}_{_A}(\rho)\cap \Delta_{A}\neq\emptyset$. Then there exsits $(x,x)\in {(\sigma_{{f\vert_{_{_A}}}})}_{_A}(\rho)\cap \Delta_{A}$. So we have $\vcenter{\xymatrix @-1.5pc @ur { x\ar@<1ex>[d]_-{{f\vert_{_A}}}&x\ar@<1ex>[d]^-{{f\vert_{_A}}}\\
				f(x)\ar[r]_{\rho} & f(x).}}$
 Thus $(f(x),f(x))\in\rho\cap  \Delta_{Y}\neq\emptyset$. Conversely, suppose $(y,y)\in \rho\cap  \Delta_{Y}$. Since $ f\vert_{_A}$ is bijective, so there exists a unique $x\in A$, such that, we have $\vcenter{\xymatrix @-1.5pc @ur { x\ar@<1ex>[d]_-{{f\vert_{A}}}&x\ar@<1ex>[d]^-{{f\vert_{_A}}}\\
				y\ar[r]_{\rho} & y.}}$ Thus $(x,x)\in {(\sigma_{{f\vert_{_{_A}}}})}_{_A}(\rho)$ and $ {(\sigma_{{f\vert_{_{_A}}}})}_{_A}(\rho)\cap \Delta_{A}\neq\emptyset$.
	\end{proof}

	\begin{theorem}\label{mapp3}
		 Suppose  $f:X\longrightarrow Y$ is an isomorphism in $\bf{Nom}$. Then the stochastic morphism $(f,\sigma_f)$
		 
{\rm (i)} preserves and reflects well-defined relations.

{\rm (ii)} preserves and reflects  injective.

{\rm (iii)} preserves and reflects constant relations.
	\end{theorem}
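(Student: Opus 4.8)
The plan is to reduce all three items to one structural fact about $(\sigma_f)_S$ and then transport each property across a bijection. Since $f\colon X\to Y$ is an isomorphism in $\mathbf{Nom}$ it is an equivariant bijection whose inverse $f^{-1}$ is again equivariant, so the stochastic morphism $(f,\sigma_f)$ exists by Proposition \ref{prop define deterministic}, and the claim is really about its components $(\sigma_f)_S$. First I would unwind the formula of that proposition when the relation in question is the graph of the function $f$: the conditions $(x,y_1)\in f$, $(s,y_2)\in f$ force $y_1=f(x)$, $y_2=f(s)$, and once $s\in S$ the side condition $y_2\in\overrightarrow{f}(S)$ is automatic, so for every $S\in\mathcal{P}_{\rm fs}(X)$ and every $\rho\in\mathcal{R}_{\rm fs}(Y,\overrightarrow{f}(S))$,
$$(\sigma_f)_S(\rho)=\bigl\{(x,s)\in X\times S:\ (f(x),f(s))\in\rho\bigr\}.$$
Because $f$ is an isomorphism, $\overrightarrow{f}(S)$ is a finitely supported subset of $Y$ (Proposition \ref{comper}(i)) and $\overrightarrow{f^{-1}}\bigl(\overrightarrow{f}(S)\bigr)=S$, so $f\times f$ restricts to a bijection $X\times S\to Y\times\overrightarrow{f}(S)$ and the displayed equation exhibits $(\sigma_f)_S$ as the ``preimage along $f\times f$'' bijection $\mathcal{R}_{\rm fs}(Y,\overrightarrow{f}(S))\to\mathcal{R}_{\rm fs}(X,S)$, with two-sided inverse $(\sigma_{f^{-1}})_{\overrightarrow{f}(S)}$. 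In particular $(x,s)\in(\sigma_f)_S(\rho)\iff(f(x),f(s))\in\rho$, and $a=b\iff f(a)=f(b)$.

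With this dictionary each part is a one-line translation, ``preserves'' using injectivity of $f$ and ``reflects'' using its surjectivity. For (i): if $\rho$ is well-defined and $(x,s),(x,s')\in(\sigma_f)_S(\rho)$ then $(f(x),f(s)),(f(x),f(s'))\in\rho$, so $f(s)=f(s')$, so $s=s'$; conversely, if $(\sigma_f)_S(\rho)$ is well-defined and $(y,z),(y,z')\in\rho$, then $z,z'\in\overrightarrow{f}(S)$, so we may write $y=f(x)$, $z=f(s)$, $z'=f(s')$ with $s,s'\in S$, obtain $(x,s),(x,s')\in(\sigma_f)_S(\rho)$, hence $s=s'$ and $z=z'$. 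Part (ii) is the same argument with the two coordinates exchanged: $(x,s),(x',s)\in(\sigma_f)_S(\rho)$ gives $(f(x),f(s)),(f(x'),f(s))\in\rho$, so $f(x)=f(x')$ and $x=x'$, and the reflecting direction is symmetric. For (iii), taking a relation to be constant when its image is a singleton: if ${\rm Im}\,\rho=\{c\}$ then $c\in\overrightarrow{f}(S)$, say $c=f(s_0)$, and $(\sigma_f)_S(\rho)=\{(x,s_0):(f(x),c)\in\rho\}$ has image contained in $\{s_0\}$, hence is constant; applying the same step to $f^{-1}$ (recall $(\sigma_{f^{-1}})_{\overrightarrow{f}(S)}$ is the inverse of $(\sigma_f)_S$) yields the reflecting direction.

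I do not expect a genuine obstacle here; the only care needed is bookkeeping of codomains — checking that $\overrightarrow{f}(S)\in\mathcal{P}_{\rm fs}(Y)$, that $\overrightarrow{f^{-1}}(\overrightarrow{f}(S))=S$ so that restricting to $X\times S$ discards nothing, and that $(\sigma_f)_S$ and $(\sigma_{f^{-1}})_{\overrightarrow{f}(S)}$ really are mutually inverse. Once that is in place the theorem is immediate, since ``well-defined'', ``injective'' and ``constant'' are each properties of a relation expressed purely through membership and equality, hence invariant under transport along the bijection $f\times f$; the content of the statement is precisely the clean description of $(\sigma_f)_S$ as pullback along $f\times f$.
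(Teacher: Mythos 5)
Your proposal is correct and follows essentially the same route as the paper: both unwind Proposition \ref{prop define deterministic} to the equivalence $(x,s)\in(\sigma_f)_{_S}(\rho)\Leftrightarrow(f(x),f(s))\in\rho$ and then transport each of the three properties, using injectivity of $f$ for the preserving direction and surjectivity for the reflecting direction. Your additional packaging of $(\sigma_f)_{_S}$ as a bijection with inverse $(\sigma_{f^{-1}})_{_{\overrightarrow{f}(S)}}$ merely organizes the same six element-chases, and you are in fact slightly more careful than the paper's proof about the codomain constraint $s\in S$ in the reflecting directions.
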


	\begin{proof}
	{\rm (i)}	Suppose $\rho$ is well-defined and $(x,y),(x,y')\in (\sigma_f)_{_A}(\rho)$. So we have: 
		$\vcenter{\xymatrix @-1.5pc @ur { x\ar@<1ex>[d]_{f}&y\ar@<1ex>[d]^{f}\\
				f(x)\ar[r]_{\rho} &f(y)}}$ and 
		$\vcenter{\xymatrix @-1.5pc @ur { x\ar@<1ex>[d]_{f}&y'\ar@<1ex>[d]^{f}\\
				f(x)\ar[r]_{\rho} &f(y')}}$.
		Since  $\rho$ is well-defined, so  $f(y)=f(y')$. Because $f$ is injective, then $y=y'$. Thus $ (\sigma_f)_{_A}(\rho)$ is well-defined.  Conversely, suppose $ (\sigma_f)_{_A}(\rho)$ is well-defined and $(y,y'),(y,y'')\in\rho$. Since $f$ is bijective, so  we have $\vcenter{\xymatrix @-1.5pc @ur { {f^{-1}(y)}\ar@<1ex>[d]_{f}&{f^{-1}(y')}\ar@<1ex>[d]^{f}\\
				y\ar[r]_{\rho} & y'}}$ and $\vcenter{\xymatrix @-1.5pc @ur { {f^{-1}(y)}\ar@<1ex>[d]_{f}&{f^{-1}(y'')}\ar@<1ex>[d]^{f}\\
				y\ar[r]_{\rho} & y''}}$. Then  $(f^{-1}(y),f^{-1}(y')), (f^{-1}(y),f^{-1}(y''))\in(\sigma_f)_{_A}(\rho)$. Since $(\sigma_f)_{_A}(\rho)$ and $f$ are well-defined, so $f^{-1}(y')=f^{-1}(y'')$ and $y'=y''$. Thus $\rho$ is well-defined.
	
\medskip

	{\rm (ii)} Suppose $\rho$ is injective and $(x,y),(x',y)\in (\sigma_f)_{_A}(\rho)$. So we have
		$\vcenter{\xymatrix @-1.5pc @ur { x\ar@<1ex>[d]_{f}&y\ar@<1ex>[d]^{f}\\
				f(x)\ar[r]_{\rho} & f(y)}}$ and 
		$\vcenter{\xymatrix @-1.5pc @ur { x'\ar@<1ex>[d]_{f}&y\ar@<1ex>[d]^{f}\\
				f(x')\ar[r]_{\rho} & f(y).}}$ Since  $\rho$ and $f$ are injective, so $f(x)=f(x')$ and $x=x'$.  Thus $(\sigma_f)_{_A}(\rho)$ is injective. Conversely, suppose $(\sigma_f)_{_A}(\rho)$ is injective and $(y',y), (y'',y)\in\rho$. Since $f$ is bijective, so 
		we have $\vcenter{\xymatrix @-1.5pc @ur {{ f^{-1}(y')}\ar@<1ex>[d]_-{f}&{f^{-1}(y)}\ar@<1ex>[d]^-{f}\\
				y'\ar[r]_{\rho} & y}}$
		and 
		$\vcenter{\xymatrix @-1.5pc @ur {{f^{-1}(y'')}\ar@<1ex>[d]_-{f}&{f^{-1}(y)}\ar@<1ex>[d]^-{f}\\
				y''\ar[r]_{\rho} & y}}$.  Then $(f^{-1}(y'),f^{-1}(y)),(f^{-1}(y''),f^{-1}(y))\in (\sigma_f)_{_A}(\rho)$, because  $(\sigma_f)_{_A}(\rho)$ is injective, then $f^{-1}(y')=f^{-1}(y'')$. Since $f$ is well-defined, so $y'=y''$. Thus $\rho$ is injective.

	\medskip
	
{\rm (iii)} Suppose $\rho$ is  constant and $ (\sigma_f)_{_A}(\rho)$ is not constant, that is, there exist $(x,y),(x',y')\in (\sigma_f)_{_A}(\rho)$, where $y\neq y'$. So we have $\vcenter{\xymatrix @-1.5pc @ur { x\ar@<1ex>[d]_{f}&y\ar@<1ex>[d]^{f}\\
				f(x)\ar[r]_{\rho} & f(y)}}
		$
		and $\vcenter{\xymatrix @-1.5pc @ur { x'\ar@<1ex>[d]_{f}&y'\ar@<1ex>[d]^{f}\\
				f(x')\ar[r]_{\rho} & f(y').}}
		$
		Since $\rho$ is constant, so $f(y)=f(y')$. Because $f$ is injective, then $y=y'$,  this is a contradiction. Thus $(\sigma_f)_{_A}(\rho)$ is constant. Conversely, suppose $(\sigma_f)_{_A}(\rho)$ is constant and $\rho$ is not constant, that is, there exist  $(y_{1},y_{2}), (y_{3},y_{4})\in \rho$, where $y_{2}\neq y_{4}$. Since $f$ is bijective, so   we have
		$\vcenter{\xymatrix @-1.5pc @ur { {f^{-1} (y_{1})}\ar@<1ex>[d]_-{f}&{f^{-1} (y_{2})}\ar@<1ex>[d]^-{f}\\
				y_{1}\ar[r]_{\rho} & y_{2}}}
		$
		and 
		$\vcenter{\xymatrix @-1.5pc @ur { f^{-1} (y_{3})\ar@<1ex>[d]_-{f}&{f^{-1} (y_{4})}\ar@<1ex>[d]^-{f}\\
				y_{3}\ar[r]_{\rho} & y_{4}.}}
		$
		Then $(f^{-1} (y_{1}),f^{-1} (y_{2})),(f^{-1} (y_{3}),f^{-1} (y_{4}))\in(\sigma_f)_{_A}(\rho)$. Since $(\sigma_f)_{_A}(\rho)$ is constant and  $f$ is well-defined, so   $f^{-1} (y_{2})=f^{-1} (y_{4})$ and $  y_{2}=y_{4}$,  this is a contradiction. Thus $\rho$ is constant.
	\end{proof}

\begin{proposition}\label{fereshte}
	Let $f:X\longrightarrow X$ be  in $\mathbf{Nom}$. Then

{\rm (i)} the assignment $(\sigma_f)_{_X}$ preserves  reflexive  relations.

\medskip
 
{\rm (ii)} the assignment $(\sigma_f)_{_X}$ preserves symmetric relations.

\medskip

{\rm (iii)} the assignment $(\sigma_f)_{_X}$ preserves transitive  relations.
\end{proposition}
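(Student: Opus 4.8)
The plan is to make the description of $(\sigma_f)_{_X}$ completely explicit in the present situation first; once that is done, the three assertions reduce to one-line verifications.

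Recall from Proposition~\ref{prop define deterministic}, applied with $R$ taken to be the underlying relation of the equivariant map $f$ and with $S=X$, that for $\rho\in\mathcal R_{\rm fs}(X,X)$ one has
\[
(x,s)\in(\sigma_f)_{_X}(\rho)\iff \exists\,y_1\in X,\ \exists\,y_2\in\overrightarrow{f}(X)\ \text{with}\ (x,y_1)\in f,\ (s,y_2)\in f,\ (y_1,y_2)\in\rho.
\]
Because $f$ is a function, the condition $(x,y_1)\in f$ forces $y_1=f(x)$ and the condition $(s,y_2)\in f$ forces $y_2=f(s)$; moreover $f(s)\in\overrightarrow{f}(X)={\rm Im}f$ holds automatically, so the side condition on $y_2$ is vacuous. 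Hence the existential quantifiers collapse and we obtain the clean formula
\[
(\sigma_f)_{_X}(\rho)=\{(x,s)\in X\times X:\ (f(x),f(s))\in\rho\}.
\]
I would record this identity as the opening step of the proof; it also makes clear that $(\sigma_f)_{_X}(\rho)$ is again a finitely supported relation on $X$, consistently with Proposition~\ref{mohemm}.

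With this formula in hand, the three parts are immediate. For (i): if $\rho$ is reflexive then $(f(x),f(x))\in\rho$ for every $x\in X$, hence $(x,x)\in(\sigma_f)_{_X}(\rho)$, so $(\sigma_f)_{_X}(\rho)$ is reflexive. For (ii): if $\rho$ is symmetric and $(x,s)\in(\sigma_f)_{_X}(\rho)$, then $(f(x),f(s))\in\rho$, whence $(f(s),f(x))\in\rho$, whence $(s,x)\in(\sigma_f)_{_X}(\rho)$. For (iii): if $\rho$ is transitive and $(x,s),(s,t)\in(\sigma_f)_{_X}(\rho)$, then $(f(x),f(s))\in\rho$ and $(f(s),f(t))\in\rho$, so $(f(x),f(t))\in\rho$ by transitivity of $\rho$, i.e. $(x,t)\in(\sigma_f)_{_X}(\rho)$.

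There is essentially no obstacle here; the only point that deserves care is the very first step, namely justifying that the two witnessing elements appearing in the definition of $(\sigma_f)_{_X}$ must be $f(x)$ and $f(s)$ because $f$ is single-valued, and that the membership $y_2\in\overrightarrow{f}(X)$ is then automatic. After that, parts (i)--(iii) are purely formal: they use only that $(\sigma_f)_{_X}(\rho)$ is the preimage of $\rho$ along $f\times f$, and such a preimage preserves reflexivity, symmetry, and transitivity.
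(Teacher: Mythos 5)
Your proof is correct and follows essentially the same route as the paper: the paper's diagrammatic verifications amount exactly to your identity $(\sigma_f)_{_X}(\rho)=\{(x,s):(f(x),f(s))\in\rho\}$, i.e.\ that $(\sigma_f)_{_X}(\rho)$ is the preimage of $\rho$ along $f\times f$, after which reflexivity, symmetry, and transitivity are preserved formally. Your explicit collapse of the existential witnesses to $y_1=f(x)$, $y_2=f(s)$ is a clean way of stating what the paper leaves implicit in its diagrams.
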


\begin{proof}
	{\rm (i)} Suppose  $\rho\in \mathcal{R}_{\rm fs}(X,f(X))$ is reflexive. So, for every $x\in X$, we have $\vcenter{\xymatrix @-1.5pc @ur { x\ar@<1ex>[d]_-{f}&x\ar@<1ex>[d]^-{f}\\
			f(x)\ar@^{->}[r]_{\rho} & f(x)}}.$ Thus $ (x,x)\in (\sigma_f)_{_X}(\rho )$.

\medskip

{\rm (ii)} Suppose  $\rho\in \mathcal{R}_{\rm fs}(X,f(X))$ is symmetric and $(x,y)\in (\sigma_f)_{_X}(\rho )$. So we have $\vcenter{\xymatrix @-1.5pc @ur { x\ar@<1ex>[d]_-{f}&y\ar@<1ex>[d]^-{f}\\
			f(x)\ar@^{->}[r]_{\rho} & f(y)}}.$ Since $\rho$ is symmetric , then we have $\vcenter{\xymatrix @-1.5pc @ur { y\ar@<1ex>[d]_-{f}&x\ar@<1ex>[d]^-{f}\\
			f(y)\ar@^{->}[r]_{\rho} & f(x)}}.$ Therefore $(y,x)\in (\sigma_f)_{_X}(\rho )$.

\medskip

{\rm (iii)}    Suppose $(x,y), (y,z)\in (\sigma_f)_{_X}(\rho )$, and $\rho$ is transitive in $\mathcal{R}_{\rm fs}(X,f(X))$, so we have
$\vcenter{\xymatrix @-1.5pc @ur {x\ar@<1ex>[d]_-{f}&y\ar@<1ex>[d]^-{f}\\
				f(x)\ar@^{->}[r]_{\rho} & f(y)}}$ and 
	$\vcenter{\xymatrix @-1.5pc @ur { y\ar@<1ex>[d]_-{f}&z\ar@<1ex>[d]^-{f}\\
				f(y)\ar@^{->}[r]_{\rho} & f(z)}}$. Then we have $\vcenter{\xymatrix @-1.5pc @ur { x\ar@<1ex>[d]_-{f}&z\ar@<1ex>[d]^-{f}\\
			f(x)\ar@^{->}[r]_{\rho} & f(z)}}.$ Thus $ (x,z)\in (\sigma_f)_{_X}(\rho )$.{\qedhere}
\end{proof}

\begin{corollary}\label{equivalence2}
	Let $f:X\longrightarrow X$ be  in $\mathbf{Nom}$. 
	
	{\rm (i)} Then the stochastic morphism $(f,\sigma_f)$ preserves equivalence relations.

{\rm (ii)} If $\rho\in \mathcal{R}_{\rm fs}(X,X)$ is a congruence,  then $(\sigma_f)_{_X}(\rho )$ is a congruence.
\end{corollary}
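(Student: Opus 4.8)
The plan is to read both statements off from results already established, so the argument is short. For part~(i), recall that an equivalence relation is exactly a relation that is reflexive, symmetric, and transitive. Proposition~\ref{fereshte}(i)--(iii) states precisely that the assignment $(\sigma_f)_{_X}$ preserves each of these three properties, so applying the three parts to one and the same $\rho$ shows at once that $(\sigma_f)_{_X}(\rho)$ is reflexive, symmetric, and transitive; since by the defining formula of Proposition~\ref{prop define deterministic} every pair of $(\sigma_f)_{_X}(\rho)$ already lies in $X\times X$, it is an equivalence relation on $X$. Hence $(f,\sigma_f)$ preserves equivalence relations.

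For part~(ii), a congruence on $X$ is by definition an equivariant equivalence relation, so in view of part~(i) it remains only to verify that $(\sigma_f)_{_X}(\rho)$ is equivariant whenever $\rho$ is. Since $f\colon X\to X$ lies in $\mathbf{Nom}$ it is an equivariant relation in $\mathcal{R}(X,X)$, so Note~\ref{exmple deter morphism}(ii) applies and gives that the map $(\sigma_f)_{_X}$ is equivariant. Equivariance of that map reads $(\sigma_f)_{_X}(\pi\cdot\rho)=\pi\cdot(\sigma_f)_{_X}(\rho)$ for all $\pi\in{\rm Perm}(\mathbb{D})$; as $\rho$ is equivariant we have $\pi\cdot\rho=\rho$, whence $\pi\cdot(\sigma_f)_{_X}(\rho)=(\sigma_f)_{_X}(\rho)$ for every $\pi$, i.e.\ $(\sigma_f)_{_X}(\rho)$ is a zero element of $\mathcal{R}(X,X)$ and so an equivariant relation. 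Together with part~(i) this makes $(\sigma_f)_{_X}(\rho)$ an equivariant equivalence relation, that is, a congruence.

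I expect no genuine obstacle; the only point deserving a line of care is the domain of $(\sigma_f)_{_X}$, which is $\mathcal{R}_{\rm fs}(X,\overrightarrow{f}(X))$ rather than all of $\mathcal{R}_{\rm fs}(X,X)$ -- in particular speaking of a reflexive $\rho$ there tacitly uses $\overrightarrow{f}(X)=X$, i.e.\ $f$ surjective, which is the situation intended here (and automatic for the isomorphisms appearing in the neighbouring results). Under this reading, both parts follow immediately from Proposition~\ref{fereshte} and Note~\ref{exmple deter morphism}(ii), with no further computation needed.
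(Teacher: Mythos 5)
Your proposal is correct and follows essentially the same route as the paper: part (i) is read off from Proposition \ref{fereshte}, and part (ii) combines part (i) with the equivariance of the map $(\sigma_f)_{_X}$. The only cosmetic difference is that you invoke Note \ref{exmple deter morphism}(ii) for that equivariance where the paper cites Proposition \ref{mohemm}, but these two statements carry the same content (empty support of $(\sigma_f)_{_X}$ versus its being an equivariant map), and your added caveat about the domain $\mathcal{R}_{\rm fs}(X,\overrightarrow{f}(X))$ is a fair observation that the paper itself glosses over.
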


\begin{proof}
{\rm (i)} It follows from Proposition \ref{fereshte}.

{\rm (ii)} It follows from part (i) and Proposition \ref{mohemm}.
	\end{proof}

\section{Conclusion}

The category of nominal sets and equivariant maps between them atractted a lot of interest of computer science scientists due to their unique properties. In this paper we replace equivariant relations rather than equivariant maps and consider the category ${\bf Rel}({\bf Nom})$, because  this category not only contains the category ${\bf Nom}$ and is more expressive than ${\bf Nom}$ but also because of the kind of morphisms in this category, one can allows to work various structures that are not functions. For example ${\bf Rel}({\bf Nom})$ can be used to model dependent types, which are types that depend on several values, or data that can be correlated by relations. A deterministic morphism, which  give each input data set a specific output of the same type, are also introduce in this paper. On the other hand, each input can be given a set of outputs by using  stochastic maps, which there is a given likelihood that each will occur.  We also introduce and examin stochastic maps in this paper.

\medskip

This paper consists of four sections. The need foundational concepts are covered in the first section. In the second section, we introduce the category $ {\bf Rel}({\bf Nom})$ consisting of nominal sets and equivariant relations between them and we examine some of properties of this category. In the third section, we define two functors $\mathcal{P}_{\rm fs *}$ and ${\mathcal{P}_{\rm fs}}^*$. In Theorem \ref{nbvcrei}, we  show that ${\mathcal{P}_{\rm fs}}^*\dashv \mathcal{P}_{\rm fs *}$, and hence the functor $\mathcal{P}_{\rm fs *}$ is the functor asigning each nominal set in $ {\bf Rel}({\bf Nom})$ to its sheaf representation. Finally, in section 4, we introduce deterministic and stochastic morphism. In Proposition \ref{prop define deterministic}, we show that every equivariant relation determines a natural deterministic morphism. Also, we investigate the deterministic morphism's support in Proposition \ref{mohemm} and  we can see the property of $(\sigma_f)_{_X}$, where $f$ is an equivariant map, in Proposition \ref{fereshte}.

\medskip

For further work in the future, we can focus on free, indecomposable, cyclic, injective objects in the category $ {\bf Rel}({\bf Nom})$ with stochastic and deterministic morphism. Also, we can study some categorical properties in this category, for example existence of monad, the Kleisli and Eilenberg-Moore categories, filtered category and  sheaf representation of nominal sets in the category $ {\bf Rel}({\bf Nom})$ with stochastic and deterministic morphism.



\begin{thebibliography}{99}
\bibitem{Ad}
  Ad\'amek, J.,  Herrlich, H., and Strecker, G., 1990. {\rm `` Abstract and concrete
categories."} Wiley-Interscience.

\bibitem{Alex1}
 Alexandru, A., and  Ciobanu, G.,  2020. {\rm ``Fixed point results for finitely supported algebraic structures."} Fuzzy Sets and Systems, 397, pp. 1-27.

 \bibitem{Al}
Alexandru, A., and Ciobanu, G., 2015. {\rm `` Mathematics of multisets in the Fraenkel-Mostowski framework."} Bulletin math\'ematique de la Soci\'et\'e des Sciences Mathématiques de Roumanie, pp. 3-18.

\bibitem{Aleex}
Alexandru, A. and Ciobanu, G., 2016. {\rm `` Finitely Supported Mathematics: An Introduction."} Springer.

\bibitem{binding} 
 Amorim, A.A., 2016. {\rm ``Binding Operators for Nominal Sets.''} Electronic Notes in Theoretical Computer Science, 325, pp. 3-27. 


\bibitem{Barr}
Barr, M., and Wells, C., 1990 {\rm `` Category theory for computing science"} (Vol. 1).
New York: Prentice Hall.
 
 \bibitem{Jean}
B\'enabou, J., 1967. {\rm ``Introduction to bicategories."} In Reports of the midwest category seminar (pp. 1-77). Springer, Berlin, Heidelberg.

 \bibitem{Blyth}
Blyth, T.S., 1986. {\rm  Categories}, University of st. Andrews, Scotland.



\bibitem{dr.ebrahimi}
Ebrahimi, M.M. and Mahmoudi, M., 2001. {\rm `` The category of M-sets ."} Italian journal of pure and applied mathematics, pp. 123-132.

 \bibitem{frankeal}
Fraenkel. A.A., 1922. {\rm `` Der begriff definit und die unabhangigkeit des
auswahlsaxioms"}, Sitzungsberichte der Preussischen Akademie der Wissenschaften,
Physikalisch-mathematische Klasse, pp. 253-257.
 
\bibitem{fred}
Freyd, P.J., and Scedrov, A., 1990. {\rm `` Categories, allegories."} Elsevie.



\bibitem{john}
Harding, J., Walker, C.,  and Walker, E., 2014. {\rm `` Categories with fuzzy sets and relations."} Fuzzy Sets and Systems, 256, pp. 149-165.

\bibitem{Chris}
 Heunen, C., and Jacobs, B., 2010. {\rm ``Quantum logic in dagger kernel categories."} Order, 27(2), pp. 177-212.
 
  \bibitem{hosseinabadi}
 Hossinabadi, A.,  Haddadi, M., and Keshvardoost, K., 2022. {\rm `` On nominal sets with support-preorder.''} Categories and General Algebraic Structures with Applications, 17(1), pp. 141-172.
 
 \bibitem{Anna}
Jen$\breve{\rm c}$ov\'a, A., and Gejza, J., 2017. {\rm ``On monoids in the category of sets and relations."} International Journal of Theoretical Physics, 56(12), pp. 3757-3769.
 
  \bibitem{kilp}
Kilp, M., Knauer, U., and Mikhalev, A.V., 2011. {\rm `` Monoids, Acts and Categories."} In Monoids, Acts and Categories. de Gruyter.

\bibitem{mac}
Mac Lane, S., 2013. {\rm `` Categories for the working mathematician"} (Vol. 5). Springer Science and Business Media. 

\bibitem{PH}
Pasbani, H.,  and  Haddadi, M., 2023. {\rm``The fresh-graph of a nominal set."} Discrete Mathematics, Algorithms and Applications, 15, (7),pp. 2250161 (17 pages).
 \bibitem{Pitts2}
Pitts, A.M., 2013. {\rm ``Nominal sets, Names and Symmetry in Computer
Science."}  Cambridge University Press.


 


\bibitem{dd}
 Selinger, P., 2008. {\rm ``Idempotents in dagger categories.''} Electronic Notes in Theoretical Computer Science, 210, pp. 107-122.


 \bibitem{tens}
 Tennison, B.R., 1975. {\rm `` Sheaf theory"} (Vol. 21). Cambridge University Press.

 
\end{thebibliography}
 \end{document}